\DeclareMathOperator{\Aut}{Aut}
\DeclareMathOperator{\Auteq}{Aut}
\newcommand{\scrM}{\EuScript{M}}
\newcommand{\scrA}{\EuScript{A}}
\newcommand{\scrX}{\EuScript{X}}
\newcommand{\scrJ}{\EuScript{J}}
\newcommand{\scrF}{\EuScript{F}}
\DeclareMathOperator{\Diff}{Diff}
\DeclareMathOperator{\Symp}{Symp}
\def\bP{\mathbb{P}}
\def\bQ{\mathbb{Q}}
\def\bZ{\mathbb{Z}}
\renewcommand{\cong}{\simeq}
\begin{document}
\thispagestyle{empty}
\title{Irrationality and monodromy for cubic threefolds}

\author[Ivan Smith]{Ivan Smith}
\thanks{The author is partially supported by a Fellowship from EPSRC.}

\address{Ivan Smith, Centre for Mathematical Sciences, University of Cambridge, Wilberforce Road, Cambridge CB3 0WB, U.K.}

\begin{abstract} {\sc Abstract:} Let $\scrM$ denote the moduli space of smooth cubic threefolds. We prove that the  monodromy $\pi_1(\scrM) \to Sp(10;\bZ)$ of the 3rd cohomology groups on the fibres of the universal family does not factor through the genus five  mapping class group. This gives a geometric group theory perspective on the well-known irrationality of cubic threefolds.
\end{abstract}
\thispagestyle{empty}
\maketitle
\section{Introduction}

Clemens' and Griffiths' renowned proof \cite{Clemens-Griffiths} of the irrationality of smooth cubic threefolds has two main ingredients.  The first, more elementary, part comprises the observation that if a threefold is rational, then its intermediate Jacobian is isomorphic as a principally polarised abelian variety to a product of Jacobians of algebraic curves.  The second, deeper part, is to prove that the intermediate Jacobian of a cubic is not such a product, by showing that its $\Theta$-divisor has  singularity locus of codimension $4$.  (The $\Theta$-divisor of a Jacobian is singular in codimension $3$ for smooth non-hyperelliptic curves, in codimension $2$ for smooth hyperelliptic curves, and in codimension one when reducible, i.e. for the product of Jacobians of a disjoint union of curves of positive genera.)  This note suggests an alternative perspective on the second part,  based on ideas of monodromy. 

There are many proofs of irrationality of (general) cubic threefolds, via Hodge theory, the Weil conjectures, motivic integration, degeneration of Prym varieties etc \cite{Murre, Collino, Gwena, MR, KT}.  The viewpoint presented here does not yield a new proof but seems philosophically related. It was motivated by considerations in symplectic topology, cf. Section \ref{Sec:aspirational}, and suggests a 
connection, allbeit partial, between rationality of threefolds and rigidity questions in geometric group theory. 

\subsection{Result}

Let $\scrM_{3,3}$ denote the moduli space of smooth cubic threefolds, and $\scrA_5$ the moduli space of five-dimensional principally polarised abelian varieties.  The association $X \mapsto IJ(X)$ of the intermediate Jacobian to a cubic threefold $X \subset \bP^4$ defines a map $IJ: \scrM_{3,3} \to \scrA_5$, and there is an associated map on orbifold fundamental groups
\begin{equation} \label{eqn:monodromy}
\pi_1(\scrM_{3,3}) \to \pi_1(\scrA_5) = Sp(10;\bZ).
\end{equation} 
The map \eqref{eqn:monodromy} can also be understood purely topologically, as the monodromy action on $H^3(X;\bZ)$ (a lattice of rank 10 with its skew-symmetric intersection form) arising from parallel transport for the Gauss-Manin connection in the universal family of cubic threefolds over $\scrM_{3,3}$.

For $g>0$, let $\Gamma_g = \pi_0\Diff^+(\Sigma_g)$ denote the mapping class group of a genus $g$ surface, which comes with a natural map $\Gamma_g \to Sp(2g;\bZ)$.  More generally, if $g_i>0$ are integers with $\sum_i g_i = 5$, and $\Sigma = \sqcup_i \Sigma_{g_i}$ is a closed surface of `total genus' $5$, there is a natural map $\Gamma(\Sigma) = \pi_0\Diff^+(\Sigma) \to Sp(10;\bZ)$. Our main result is:

\begin{thm} \label{thm:main}
The monodromy \eqref{eqn:monodromy} does not factor through the mapping class group.
\end{thm}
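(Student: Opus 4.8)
The plan is to exhibit a finite subgroup of $\pi_1(\scrM_{3,3})$ which acts nontrivially under the monodromy \eqref{eqn:monodromy}, yet which for purely group-theoretic reasons admits no nontrivial homomorphism to any $\Gamma(\Sigma)$ with $\Sigma$ of total genus $5$. The subgroup I would use comes from the \emph{Klein cubic threefold} $V = \{x_0^2 x_1 + x_1^2 x_2 + x_2^2 x_3 + x_3^2 x_4 + x_4^2 x_0 = 0\} \subset \bP^4$, whose automorphism group is the simple group $\Aut(V) \cong \mathrm{PSL}_2(\mathbb{F}_{11})$ of order $660$ (classical; Adler). Since $\scrM_{3,3}$ is the quotient stack of the space of smooth cubic forms by $PGL_5$, the finite group $\Aut(V) = \Stab_{PGL_5}([V])$ is an orbifold point group and hence maps naturally to the orbifold fundamental group $\pi_1(\scrM_{3,3})$: a path in $PGL_5(\bC)$ from the identity to $g \in \Aut(V)$ sweeps out a loop in the space of smooth cubics based at $[V]$, and parallel transport of $H^3$ along it is $g^{\pm1}$ acting on cohomology. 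Thus the composite $\Aut(V) \to \pi_1(\scrM_{3,3}) \to Sp(10;\bZ)$ is (up to the anti-automorphism $g\mapsto g^{-1}$) the tautological action of $\Aut(V)$ on $H^3(V;\bZ)\cong\bZ^{10}$.

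The first thing to record is that this action of $\Aut(V)\cong\mathrm{PSL}_2(\mathbb{F}_{11})$ on $H^3(V;\bZ)$ is faithful. It is nontrivial: by Griffiths' residue calculus $H^{2,1}(V)$ is, $\Aut(V)$-equivariantly, the degree-one part of the Jacobian ring of $V$, i.e.\ the module of linear forms on $\bP^4$, on which $\mathrm{PSL}_2(\mathbb{F}_{11})$ acts through a faithful $5$-dimensional representation; alternatively, the Torelli theorem for cubic threefolds identifies $\Aut(V)$ with the automorphisms of the polarised Hodge structure on $H^3(V)$. Either way, the action is nontrivial, and $\mathrm{PSL}_2(\mathbb{F}_{11})$ being simple, it is faithful; so the image of $\Aut(V)$ in $Sp(10;\bZ)$ under \eqref{eqn:monodromy} is a copy of $\mathrm{PSL}_2(\mathbb{F}_{11})$.

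Now suppose \eqref{eqn:monodromy} factors as $\pi_1(\scrM_{3,3}) \xrightarrow{\phi} \Gamma(\Sigma) \xrightarrow{\psi} Sp(10;\bZ)$, with $\Sigma = \sqcup_i \Sigma_{g_i}$, $g_i>0$, $\sum_i g_i = 5$. Then $\phi(\Aut(V))$ is a finite subgroup of $\Gamma(\Sigma)$ and, being a quotient of the simple group $\mathrm{PSL}_2(\mathbb{F}_{11})$, is either trivial or a copy of $\mathrm{PSL}_2(\mathbb{F}_{11})$. The second possibility is excluded by a chain of elementary observations. The normal subgroup $\prod_i \Gamma_{g_i}$ of $\Gamma(\Sigma)$ has quotient a group of permutations of the (at most five) components, hence a subgroup of $S_5$; as $\mathrm{PSL}_2(\mathbb{F}_{11})$ is simple of order $660 > 120 = |S_5|$, any copy of it in $\Gamma(\Sigma)$ already lies in $\prod_i \Gamma_{g_i}$, and a simple group embedded in a finite direct product embeds via a coordinate projection into one factor, so $\mathrm{PSL}_2(\mathbb{F}_{11})$ would embed in some $\Gamma_{g_j}$ with $1\le g_j\le 5$. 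But by Nielsen realisation (Kerckhoff) every finite subgroup of $\Gamma_g$ is realised by automorphisms of a closed genus-$g$ Riemann surface, hence has order at most $84(g-1)$ for $g\ge2$ and at most $6$ for $g=1$; in all cases at most $336 < 660$, a contradiction. Therefore $\phi(\Aut(V))$ is trivial, so the monodromy \eqref{eqn:monodromy}, being $\psi\circ\phi$, is trivial on $\Aut(V)$ — contradicting the faithfulness established above. Hence no such factorisation exists.

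I expect the only genuinely delicate point to be setting up cleanly the homomorphism $\Aut(V) \to \pi_1(\scrM_{3,3})$ from the stack structure and its compatibility with the monodromy local system $R^3 f_*\bZ$ of the universal family — routine for quotient stacks, but worth stating carefully (one may also pass to a level cover on which the statements become ordinary topology). The faithfulness input is classical, and everything else is finite group theory together with the Hurwitz/Nielsen order bound — precisely the ``geometric group theory'' ingredient: $\mathrm{PSL}_2(\mathbb{F}_{11})$ is too large to occur in the mapping class group of a total-genus-$5$ surface. Note that, unlike the analytic irrationality arguments and even unlike the computation of the full monodromy group, this argument uses almost nothing about the family beyond the existence of one sufficiently symmetric member.
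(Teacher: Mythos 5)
Your argument is essentially correct, but it reaches the theorem by a completely different and much shorter route than the paper, and there is one point where it does not quite engage with the object the paper actually works with. In Section 2 the paper defines $\scrM_{3,3}$ as the discriminant complement in $\bP H^0(\mathcal{O}_{\bP^4}(3))$ — not as the quotient stack by $PGL_5$ — and the factorisation being obstructed is of $\pi_1$ of that complement (the group with L\"onne's presentation). A factorisation of the orbifold $\pi_1$ of the quotient pulls back to one of the discriminant complement, but not conversely, so obstructing the former is the weaker statement; your appeal to the isotropy group of the quotient stack therefore lands in the wrong group. Fortunately your own description of the loops rescues this: for a path $\gamma$ in $PGL_5(\bC)$ from $e$ to $g \in \Aut(V)$, the loop $t \mapsto \gamma(t)\cdot V$ lies in the discriminant complement itself, and the subgroup these loops generate is the image of the finite group $\pi_1\bigl(PGL_5(\bC)/\Aut(V)\bigr)$, an extension of $\mathrm{PSL}_2(\mathbb{F}_{11})$ by $\pi_1(PGL_5(\bC)) \cong \bZ/5$. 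This extension is central (the conjugation action is trivial since $\mathrm{PSL}_2(\mathbb{F}_{11})$ is perfect) and splits because $H^2(\mathrm{PSL}_2(\mathbb{F}_{11});\bZ/5)=0$ (the Schur multiplier is $\bZ/2$). Hence $\pi_1(\scrM_{3,3})$ genuinely contains a copy of $\mathrm{PSL}_2(\mathbb{F}_{11})$ on which the monodromy is the faithful tautological action, and the rest of your argument — simplicity, reduction to a single factor $\Gamma_{g_j}$, Kerckhoff realisation and $84(g-1)\leq 336 < 660$ — goes through verbatim. So the ``delicate point'' you flagged is real but routine, as you suspected; it just needs the $\bZ/5$ bookkeeping above.

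The comparison with the paper is then worth recording. Your proof is exactly the mapping-class-group analogue of Beauville's observation, quoted in the paper's first Remark, that $\mathrm{PSL}_2(\mathbb{F}_{11})$ is too large to act on a product of small-genus Jacobians: it uses one maximally symmetric fibre plus the Hurwitz bound, and it does not even require the map $\Gamma(\Sigma)\to Sp(10;\bZ)$ to be the standard symplectic representation. The paper deliberately takes the long road through L\"onne's presentation, Castel's rigidity theorem for braid groups, and an unrealisable curve configuration, because that route proves the strictly stronger statement (flagged in a Remark after L\"onne's proposition) that no homomorphism from the Artin group $G(\Gamma)$ to $\Gamma_5$ lifts the homological monodromy; your argument lives entirely downstairs in $\pi_1(\scrM_{3,3})$ and exploits its torsion, so it says nothing about lifts of $G(\Gamma)$, whose relations are purely of braid/commutation type. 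The paper's method is also the one with a chance of transferring to the symplectic ``aspirational context'' of Section 1.3, where only those braid-type relations among Dehn twists, and not the complex automorphism group of a special fibre, would be available. In short: your proof is correct, shorter, and more elementary, but proves less; the paper's proof is the one that supports its stated motivations.
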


We focus on showing it does not factor through $\Gamma_5$; the other cases are simpler, see Corollary \ref{cor:disconnected}.  We remark that there is no rational cohomological obstruction to a factorization \cite{Peters-Steenbrink}.

\subsection{Irrational context\label{Sec:irrat}}

Let $M_g$ denote the moduli space of genus g curves. The Torelli map $\frak{t}: M_g \to \scrA_g$ is injective on geometric points, by the classical Torelli theorem.  Denote the image of $\frak{t}$ by $\scrJ_g$, the locus of Jacobians of genus g curves.  Let $\overline{\scrJ}_g$ denote the closure of $\scrJ_g$ inside $\scrA_g$; both $\scrJ_g$ and $\overline{\scrJ}_g$ are singular. Let $M_g^{ct}$ denote the moduli space of curves of compact type (ones whose dual graph is a tree).  The Torelli map extends to a map $\frak{t}: M_g^{ct} \to \overline{\scrJ}_g \subset \scrA_g$ which, however, is no longer injective (the Jacobian of a curve of compact type depends only on its normalization, not the points at which the component curves are glued). We have a diagram of spaces
\begin{equation}\label{eqn:lift}
\begin{gathered}
\begin{tabular}{c}
\xymatrix{
\scrM_{3,3} \ar[r] \ar@{-->}[rd] & \scrA_5 \ar@{=}[r] & \scrA_5 \\ & \overline{\scrJ}_5 \ar[u] &  \scrJ_5 \ar[l] \ar[u] \\ & M_5^{ct} \ar[u] & M_5 \ar[u] \ar[l] 
}
\end{tabular}
\end{gathered}
\end{equation}
Suppose for a moment that every smooth cubic was rational, so its intermediate Jacobian was a product of Jacobians of curves. This would exactly say that the top left horizontal map $IJ$ lifts to the dotted arrow, i.e. that it lands in $\overline{\scrJ}_5$, and hence the induced map  $\pi_1(\scrM_{3,3}) \to Sp(10;\bZ)$ factors through $\pi_1(\overline{\scrJ}_5)$.   This would certainly hold if the monodromy further lifted to $M_5^{ct}$ (which is a smooth orbifold, so somewhat more accessible than the singular space $\overline{\scrJ}_5$).  The fundamental group of $M_g^{ct}$ is the quotient of the mapping class group by the subgroup generated by Dehn twists in separating curves. This is known through work of Johnson \cite{Johnson} to yield an extension (for $g>2$)
\[
1 \to \Lambda^3(H) / H \to \pi_1(M_g^{ct}) \to Sp(2g;\bZ) \to 1
\]
where $H = H^1(\Sigma_g;\bZ)$ carries its symplectic form $[\omega]$, and wedging with $[\omega]$ defines the inclusion $H \hookrightarrow \Lambda^3H$. This extension splits rationally, since $\Lambda^3(H)/H$ is an algebraic representation; it does not split integrally, but is defined by a class of order two \cite{Morita, Hain-geometric}.  It would be interesting to compute the pullback of this class to $H^2(\pi_1(\scrM_{3,3}), \Lambda^3(H)/H)$.

This paper has a  different goal.  En route to proving irrationality, Clemens and Griffiths show that the intermediate Jacobian of a smooth cubic threefold is irreducible as a principally polarised abelian variety. If one grants this (deep) fact, then rationality would entail that $IJ(X)$ was a genus 5 Jacobian,  rather than a product of Jacobians  of perhaps smaller genus curves; the corresponding lift in \eqref{eqn:lift} would then be to the middle of the right hand column, i.e. to $\scrJ_5 \subset \overline{\scrJ}_5$.  For $g>2$, the map $M_g \to J_g \subset \scrA_g$ is not an immersion along the hyperelliptic locus (it is ramified as a map of stacks, or concretely as a map of varieties if one equips the curves and Jacobians with level $\ell$ structures for odd $\ell \geq 3$, essentially because the automorphism $-1$ of any Jacobian arises from an automorphism of a curve only when the curve is hyperelliptic, cf. \cite{Catanese}). It follows (for instance) from results of Hain \cite{Hain, Hain-torelli} that the orbifold fundamental group of $\scrJ_g$ differs from that of $M_g$, but a concrete description of the former does not seem to appear in the literature. One can again, however, ask if the stronger result is true: does the monodromy of the family of cubic threefolds further lift under the Torelli map, to the bottom line in \eqref{eqn:lift}, i.e. to $M_5$?    It is exactly this which Theorem \ref{thm:main} obstructs.  

A family of irreducible Jacobians arises from a family of curves when it is disjoint from or wholly contained in the locus of hyperelliptic Jacobians \cite{Hain}. Thus, Theorem \ref{thm:main} rules out the (admittedly special) possibility that all smooth cubic threefolds are rational with intermediate Jacobian $IJ(X)$ having theta-divisor with singular locus of fixed codimension $\geq 2$. Indeed, if that codimension were $\geq 4$ we would be done by the opening remarks of the paper; if it were $2$ or $3$ one would obtain a lift of the monodromy to $M_5$, contradicting Theorem \ref{thm:main}.
(If the fixed codimension is $1$, Theorem \ref{thm:main} gives an obstruction  to rationality if one knows \emph{a priori} that $IJ(X)$ has a fixed `reducibility type' into pieces whose theta-divisors in turn have fixed codimension singularities.) 
\begin{rmk}
The Klein cubic $\{\sum_{j\in\bZ/5} x_j^2x_{j+1}=0\} \subset \bP^4$ has automorphism group $\bP SL_2(\mathbb{F}_{11}))$, see \cite{Adler}.  Beauville \cite{Beauville-sextic} observed that, since this group is too large to act on the Jacobian of any union of small genus curves, this threefold cannot  be rational. Since  $\overline{\scrJ}_5 \subset \scrA_5$ is closed, it immediately follows that the general cubic threefold is not rational.  (The deep and general recent specialisation result on rationality due to Kontsevich and Tschinkel \cite{KT} also yields irrationality of very general cubic threefolds from that of the Klein cubic.)  Gwena \cite{Gwena} gave an obstruction to rationality from `local monodromy' considerations for the Prym varieties of a family of cubics degenerating to the Segre cubic (which has the maximal number $10$ of nodes of any cubic threefold),  see also \cite{Hulek}. This, and the work of Hain \cite{Hain}, seem close in spirit to the viewpoint given here. 
\end{rmk}  

\subsection{Aspirational context\label{Sec:aspirational}}

The original motivation for this paper concerned  `symplectic birationality' -- can one obtain a cubic threefold as a symplectic manifold, by blowing up and down projective space in symplectic submanifolds, and allowing deformations of the symplectic form (the latter is natural since symplectic blow-up already depends on a choice of scale, so yields a manifold well-defined up to symplectic deformation rather than symplectomorphism)?  Since the intermediate pieces needn't now be algebraic, or have Hodge structures, this is not something classical algebro-geometric techniques say anything about; and at least in higher dimensions there are plenty of symplectic non-algebraic submanifolds to blow up and down in.  There is a heuristic and incomplete picture of the behaviour of the derived Fukaya category $\scrF(X)^{per}$ of a symplectic manifold $X$ under blowing up \cite{Katzarkov:survey, Smith:HFquadrics, Seidel:flux}, which informally suggests that a `symplectically rational' six-manifold should have Fukaya category over a characteristic zero field which is `closely related' to\footnote{perhaps a sheaf of Clifford algebras over, or a deformation thereof}  the Fukaya category of a possibly disconnected smooth symplectic surface $\Sigma$, along with some benign semisimple summands (arising from $\scrF(\bP^3)^{per}$ and the contribution of blowing up in points). It seems hard to make that  rigorous at the current development of the subject, but it suggests that if cubics were symplectically rational, the symplectic parallel transport map 
\[
\pi_1(\scrM_{3,3}) \to \pi_0\Symp(X) \to \Auteq(\scrF(X)^{per}) \to \Aut\, HH^*(\scrF(X)^{per}) = \Aut\,QH^*(X) 
\]
might factor through a map
\[
\pi_1(\scrM_{3,3}) \to \Auteq(\scrF(\Sigma)^{per})
\]
and the latter is known to surject onto the mapping class group $\Gamma(\Sigma)$, cf. \cite{Auroux-Smith}.  Note that in this context it does appear that the mapping class group $\Gamma_g=\pi_1(M_g)$ is the more relevant object, and not its less accessible cousin $\pi_1(\scrJ_g)$. Thus, we hope that Theorem \ref{thm:main} will be relevant to an eventual theory of symplectic non-rationality. 

\subsection{Actual context}

Theorem \ref{thm:main} fits into general ideas of (super)rigidity for mapping class groups. We make particular use of the near-sharp constraints on homomorphisms of braid groups to mapping class groups established by Castel in \cite{Castel}. There is a presentation of $\pi_1(\scrM_{3,3})$ due to L\"onne \cite{Lonne} which realise it as the quotient of an Artin group $G(\Gamma)$ associated to a Dynkin-type graph $\Gamma$ coming from unfolding the isolated Fermat singularity $\{\sum_{j=0}^4 z_j^3 = 0\}\subset\C^4$, cf. \cite{Lonne-BP}. A lift of  \eqref{eqn:monodromy} to $\Gamma_5$ (say) would yield a homomorphism $G(\Gamma) \to \Gamma_5$. The proof of Theorem \ref{thm:main} has two steps: to show that under this homomorphism, the generators of the Artin group $G(\Gamma)$ corresponding to vertices of $\Gamma$ are taken to Dehn twists in non-separating simple closed curves; and then to rule out the existence of a configuration of curves with the necessary intersection pattern, by the `change-of-coordinates' principle \cite{Farb-Margalit}.  (Compare also to Salter's work \cite{Salter}, which involves realising rather than obstructing interesting Brieskorn-Pham configurations of curves.) This last step amounts to saying that certain graphs don't embed in the `Schmutz graph' \cite{SchmutzSchaller} of a small genus surface, and can be compared to similar recent investigations of the finite subgraphs of complexes of curves \cite{ABG}.  

Ultimately, such rigidity results are applications of Thurston's dynamical classification of mapping classes \cite{FLP}. The methods are effective at constraining homomorphisms to $\Gamma_g$, but probably not to subquotients such as $\pi_1(\scrJ_g)$ even given a detailed picture of that group.

\begin{rmk}    It is also interesting to ask whether the Clemens-Griffiths theorem implies Theorem \ref{thm:main}.  Suppose \eqref{eqn:monodromy} did factor through the mapping class group. Since the relevant spaces are orbifold $K(\pi,1)$'s, this would define a smooth map from $\scrM_{3,3} / \bP GL(5)$ to $M_5 \subset \scrA_5$. One could conceivably use negative curvature arguments in the vein of \cite{DKW} to replace this with a harmonic map, and then rigidity results for harmonic maps \cite{Yue} to force this to be (anti)holomorphic, contradicting \cite{Clemens-Griffiths}.  
\end{rmk}

\begin{rmk} \label{rmk:fano} Classical algebro-geometric constructions yield  non-trivial homomorphisms which might be interesting in the context of rigidity for mapping class groups. 
Let $\scrM_{3,L}$ be the moduli space of pairs comprising a cubic threefold $X$ with a line $\bP^1 = L \subset X$, which maps to $\scrM_{3,3}$ with fibres the `Fano surfaces of lines' on $X$.  (The Fano surface $S$ of lines on $X$ has $H^1(S;\bZ) = \bZ^{10}$, and the fundamental group of $\scrM_{3,L}$ is very different from that of $\scrM_{3,3}$.)  The intermediate Jacobian map $\scrM_{3,L} \to \scrA_5$  factors through the space of Prym curves of genus $6$, an unbranched cover of $M_6$ with fibre $H^1(\Sigma_6;\bZ/2)^{\times}$. Thus,  $\pi_1(\scrM_{3,L}) \to Sp(10;\bZ)$ does factor through an index  $63$ subgroup of $\Gamma_6$.
\end{rmk}

\paragraph{Acknowledgements.} This note develops a suggestion of Simon Donaldson. I am grateful to Klaus Hulek, Oscar Randal-Williams, Dhruv Ranganathan, Paul Seidel and Henry Wilton for helpful conversations and comments; to Richard Hain and Filippo Viviani for pointing out errors in the contextual discussion of Section \ref{Sec:irrat}  in the first version of this paper;  and to the Engineering and Physical Sciences Research Council, U.K., for financial support. 

\section{Background}

\subsection{Fundamental group of the moduli space of cubics}

We summarise some results of L\"onne \cite{Lonne}.  Consider the lexicographic ordering on the sixteen element set $\{0,1\}^4$. We define a graph $\Gamma$ with vertices indexed by this set, and with an edge between ${\bf i}$ and ${\bf j}$ whenever for each $\mu,\nu \in \{1,2,3,4\}$ one has $(i_\mu - j_\mu)(i_\nu - j_\nu) \geq 0$.  Thus, the obstruction to there being an edge between $(i_1,\ldots,i_4)$ and $(j_1,\ldots,j_4)$ is exactly that, for some pair of places $\mu,\nu \subset \{1,2,3,4\}$, the corresponding tuples ${\bf i}$ and ${\bf j}$ take opposite values $(0,1)$ and $(1,0)$. The analogous graph on $\{0,1\}^k$ for $k=3$ is drawn below; we are interested in the four-dimensional hypercube version. \newline

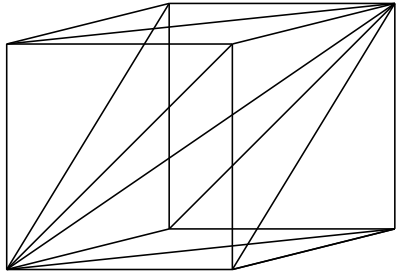
\begin{figure}[ht]
\begin{center} 
\begin{tikzpicture}[scale=0.03]

\draw [semithick] (0,100) -- (100,100);
\draw[semithick] (0,0) -- (0,100);
\draw[semithick] (100,0) -- (100,100);
\draw[semithick] (0,0) -- (100,0);

\draw[semithick] (0,0) -- (72,18);
\draw[semithick] (100,0) -- (172,18);
\draw[semithick] (0,100) -- (72,118);
\draw[semithick] (100,100) -- (172,118);
\draw[semithick] (72,18) -- (72,118);

\draw [semithick] (0,0) -- (100,100); 
\draw[semithick] (100,0) -- (172,18);

\draw[semithick] (0,0) -- (172,18); 
\draw[semithick] (0,100) -- (172,118);
\draw[semithick] (0,0) -- (72,118);
\draw[semithick] (0,0) -- (172,118);

\draw[semithick] (72,18) -- (172,18);
\draw[semithick] (72,118) -- (172,118);
\draw[semithick] (72,18) -- (172,118);
\draw[semithick] (172,18) -- (172,118);
\draw[semithick] (100,0) -- (172,118);

\end{tikzpicture}
\end{center}
\caption{The Artin graph for cubic surfaces\label{Fig:2d_case}}
\end{figure}

Consider the Artin group $G(\Gamma)$ generated by elements $\sigma_v$, for $v \in \mathrm{Vert}(\Gamma)$, and with relations
\begin{equation} \label{eqn:Artin_relations}
[\sigma_v,\sigma_w] = 1 \ \mathrm{if} \  (v,w) \not \in \mathrm{Edge}(\Gamma); \ \sigma_v\sigma_w\sigma_v = \sigma_w\sigma_v\sigma_w \ \mathrm{if} \  (v,w) \in \mathrm{Edge}(\Gamma).
\end{equation}
We will call the generators $\sigma_v$ the `standard generators' of $G(\Gamma)$.

\begin{rmk}\label{rmk:extremal}
The vertices $(0000)$ and $(1111)$ are connected by edges to all others; we will call these two vertices \emph{extremal}, and denote either by $v_{ext}$.  For any non-extremal vertex $v$, there is another vertex $v'$ with $[\sigma_v, \sigma_{v'}] = 1$. 
\end{rmk}

We have a further collection of `triangle relations'
\begin{equation} \label{eqn:triangle_relations}
\sigma_u \sigma_v \sigma_w \sigma_u = \sigma_v \sigma_w \sigma_u \sigma_v \quad  \mathrm{when} \ \ \{(u,v), (v,w), (w,u)\} \subset \mathrm{Edge}(\Gamma).
\end{equation}

Let $\scrM_{3,3} \subset \bP H^0(\mathcal{O}_{\bP^4}(3))$ be the moduli space of smooth cubic threefolds, i.e. the complement of the discriminant divisor $\Delta_{3,3}$ in the linear system of cubics on $\bP^4$.  

\begin{prop}[L\"onne]
The fundamental group $\pi_1(\scrM_{3,3})$ is a quotient of $G(\Gamma)$ by the triangle relations, and a collection of `non-local' relations given in \cite[Section 7]{Lonne}. 
\end{prop}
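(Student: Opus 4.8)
The plan is to compute $\pi_1(\scrM_{3,3})$, the fundamental group of the complement of the discriminant hypersurface $\Delta_{3,3} \subset \bP^{34} = \bP H^0(\mathcal{O}_{\bP^4}(3))$, by a Zariski--van Kampen (braid monodromy) argument arranged around a maximally degenerate cubic, for which I would take the cone $F_0 = x_1^3 + x_2^3 + x_3^3 + x_4^3$ over the Fermat cubic surface: in affine coordinates near its unique singular point $[1:0:0:0:0]$ the hypersurface is exactly the Brieskorn--Pham singularity $\sum_{i=1}^4 y_i^3$, of Milnor number $2^4 = 16$. First I would invoke the Zariski--Lefschetz theorem for hyperplane sections of complements (Hamm and L\^e): since $\Delta_{3,3}$ is irreducible, for a generic $2$-plane $P \subset \bP^{34}$ one has $\pi_1(P \setminus (\Delta_{3,3}\cap P)) \cong \pi_1(\scrM_{3,3})$, reducing the problem to the fundamental group of the complement of a plane curve $C := \Delta_{3,3}\cap P$ of degree $\deg \Delta_{3,3} = 5\cdot 2^4 = 80$. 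I would choose $P$ through $[F_0]$ and a linear projection $P \dashrightarrow \bP^1$ so that the braid monodromy of the $80$ branches of $C$ (as the fibre line sweeps out $P$) sees the local structure at $[F_0]$; then $\pi_1(\bP^2 \setminus C)$ is the quotient of a free group of rank $79$ by the braid relations.

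The standard generators $\sigma_v$, the Artin relations \eqref{eqn:Artin_relations}, and the fact that the $\sigma_v$ generate $\pi_1(\scrM_{3,3})$, should all come from the \emph{local} picture at $[F_0]$. The germ of $\Delta_{3,3}$ there is the pullback, under the classifying map of the linear system of cubics, of the discriminant of the semiuniversal deformation of the singularity $\sum_i y_i^3$; the point I would have to verify is that homogeneous cubic forms cut out a sub-deformation still transverse to the relevant right-equivalence strata, so that the two local fundamental groups agree. For the Brieskorn--Pham singularity $\sum_i y_i^3$ one has Pham's distinguished basis of vanishing cycles, indexed exactly by $\{0,1\}^4$, with Dynkin/Gabrielov graph equal to $\Gamma$; the lassos around the local branches of $\Delta_{3,3}$ associated to this basis define the $\sigma_v$, and L\"onne's braid-monodromy computation for Brieskorn--Pham discriminants (cf. \cite{Lonne-BP}) identifies the local group as $G(\Gamma)$ modulo the triangle relations \eqref{eqn:triangle_relations}. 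Hence the Artin relations and the triangle relations hold in $\pi_1(\scrM_{3,3})$; and since the braid monodromy of the pencil carries the local meridians to all $80$ branches of $C$, the $\sigma_v$ generate.

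It remains to identify the extra relations forced by working in the \emph{projective} linear system rather than in the full semiuniversal deformation --- these are the ``non-local'' relations of \cite[\S7]{Lonne}. They arise from the part of the braid monodromy attached to the intersection points of the pencil with $C$ away from $[F_0]$, and to make these computable I would degenerate $P$ (keeping it through $[F_0]$) to a special position in which $C$ becomes combinatorially transparent --- for instance by running a pencil through a cubic with many nodes, such as the Segre cubic, or through a chain of cones over Fermat surfaces --- read off the braid monodromy there, and track its deformation back to generic $P$. Organising the resulting conjugation relations should produce the non-local family. Finally, to conclude that $\pi_1(\scrM_{3,3})$ is \emph{exactly} the quotient of $G(\Gamma)$ by the triangle and non-local relations, rather than a further quotient, I would check that these relations already close the presentation: e.g. that the abelianisation is $\bZ$ (a single meridian class, consistent with the irreducibility of $\Delta_{3,3}$), together with a count of relations against an Alexander-module or Euler-characteristic bound, or a comparison with an independently computed model of the braid-monodromy group.

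The step I expect to be the main obstacle is precisely this last one: controlling the global braid monodromy of the degree-$80$ curve $C$ and proving that the triangle and non-local relations suffice. Tracking all $80$ vanishing cycles and their relative positions in an explicit Lefschetz pencil is a formidable bookkeeping problem, so a degeneration-and-induction scheme seems essential; and the assertion that no further relations are needed does not follow formally from the pencil computation, typically requiring a separate homological or comparison argument.
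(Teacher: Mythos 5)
First, note that the paper does not prove this proposition at all: it is quoted as a theorem of L\"onne and used as a black box, so there is no internal argument to compare yours against. Judged on its own terms, what you have written is a research programme in the spirit of L\"onne's two papers \cite{Lonne-BP, Lonne} (Zariski--Lefschetz reduction to a plane section, the cone over the Fermat cubic surface as organising centre, Pham's distinguished basis indexed by $\{0,1\}^4$, local relations from the Brieskorn--Pham computation, global relations from braid monodromy), not a proof. The two steps you flag as ``to be verified'' are exactly the ones that fail or require the real work.

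Concretely: (1) the versality/transversality claim is false as stated. The Milnor algebra of $\sum_{i=1}^4 y_i^3$ is spanned by the square-free monomials, which include $y_1y_2y_3y_4$ of degree $4$; cubic forms on $\bP^4$ restrict in the affine chart at the vertex to polynomials of degree $\le 3$, so the linear system induces a deformation whose derivative misses one direction of the $16$-dimensional miniversal base. The identification of the local fundamental group therefore cannot be reduced to ``the sub-deformation is transverse to the strata''; comparing the discriminant of this non-versal subfamily with the versal one is a genuine part of L\"onne's argument. (2) The completeness of the presentation --- that the Artin, triangle and non-local relations exhaust all relations of the degree-$80$ braid monodromy --- is the core of the theorem, and the checks you propose (abelianisation equal to $\bZ$, Euler-characteristic or Alexander-module counts) are far too weak to establish it; the abelianisation is $\bZ$ for the complement of \emph{any} irreducible hypersurface, regardless of which relations one imposes. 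The derivation of the non-local relations by ``degenerating $P$ and tracking the braid monodromy back'' is likewise only a plan. Since the statement is an external input to this paper rather than something it establishes, the honest course here is to cite \cite{Lonne} rather than attempt to reprove it.
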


\begin{rmk} \label{rmk:Milnor} The variety $\{\sum_{i=0}^3 x_i^3 = 0\}$ defines an isolated hypersurface singularity of Milnor number $16$. The graph $G(\Gamma)$ occurs as the intersection graph of a distinguished basis of vanishing cycles for the Milnor fibre $X_{aff} = \{\sum_{i=0}^3 x_i^3 = 1\}$, see \cite{Futaki-Ueda}.  Let $X_{aff} \subset X$ denote the inclusion of the Milnor fibre in its projective closure (i.e. in a smooth projective cubic threefold). Then there is a diagram
\[
\xymatrix{
G(\Gamma) \ar[r] \ar[d] & \pi_0\Symp_{ct}(X_{aff}) \ar[d] \\ 
\pi_1(\scrM_{3,3}) \ar[r] & \pi_0\Symp(X)
}
\]
The relations of $G(\Gamma)$ hold in $\pi_0\Symp_{ct}(X_{aff})$, whilst the  `non-local' relations are related to passing to the projective closure.
\end{rmk}

\begin{rmk} The precise shape of the non-local relations will not matter for this paper. Indeed, 
the proof of Theorem \ref{thm:main} will actually obstruct the existence of a homomorphism $G(\Gamma) \to \Gamma_5$ lifting the homological monodromy. Note that in principle it could have been that such lifts did exist, but there were no such which satisfied the triangle / non-local conditions.
\end{rmk}

Let $Br_n$ denote the braid group on $n$ strings, with standard generators $t_1,\ldots,t_{n-1}$.

\begin{lem} \label{lem:conjugate}
The generators $\sigma_v$, for $v\in \mathrm{Vert}(\Gamma)$, are all conjugate in $G(\Gamma)$.
\end{lem}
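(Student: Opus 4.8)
The plan is to show all standard generators $\sigma_v$ lie in a single conjugacy class by exploiting connectivity of the graph $\Gamma$ together with the braid (length-three) relations. The key elementary fact is that whenever $(v,w)\in\mathrm{Edge}(\Gamma)$, the braid relation $\sigma_v\sigma_w\sigma_v=\sigma_w\sigma_v\sigma_w$ rewrites as $\sigma_v\,\sigma_w\,\sigma_v^{-1} = \sigma_w^{-1}\sigma_v\sigma_w$, and in particular $\sigma_w = (\sigma_v\sigma_w)\,\sigma_v\,(\sigma_v\sigma_w)^{-1}$; that is, adjacent generators are conjugate \emph{inside} $G(\Gamma)$ by an explicit element. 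So conjugacy is an equivalence relation on $\mathrm{Vert}(\Gamma)$ that contains every edge. Since $\Gamma$ is connected, this equivalence relation is all of $\mathrm{Vert}(\Gamma)\times\mathrm{Vert}(\Gamma)$, and the lemma follows.

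First I would record the graph-theoretic input: that the Artin graph $\Gamma$ on $\{0,1\}^4$ is connected. This is immediate from Remark \ref{rmk:extremal}, since the extremal vertex $v_{ext}=(0000)$ is adjacent to every other vertex (for any ${\bf j}$, the pair $(0000)$ and ${\bf j}$ never take opposite values $(0,1),(1,0)$ in two places, as $(0000)$ has no entry equal to $1$); hence $\Gamma$ is in fact a graph of diameter at most $2$, with $v_{ext}$ a universal vertex. Second, I would state and prove the conjugacy step for a single edge, i.e. that $(v,w)\in\mathrm{Edge}(\Gamma)$ implies $\sigma_v$ and $\sigma_w$ are conjugate in $G(\Gamma)$, via the rearrangement of the braid relation above. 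Third, I would conclude by transitivity along a path in $\Gamma$ from any vertex to $v_{ext}$: each such path has length at most two, so in fact $\sigma_v$ is conjugate to $\sigma_{v_{ext}}$ by an explicitly bounded-length word for every $v$.

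I do not expect a genuine obstacle here; the only point requiring a little care is making sure one uses the \emph{Artin} relations of $G(\Gamma)$ and not, say, the triangle relations \eqref{eqn:triangle_relations} or the non-local relations, since the statement is about $G(\Gamma)$ itself. (Indeed the identity $\sigma_v\sigma_w\sigma_v=\sigma_w\sigma_v\sigma_w$ is one of the defining relations \eqref{eqn:Artin_relations}, so nothing beyond the presentation of $G(\Gamma)$ is needed.) If anything, the mild subtlety is purely expository: whether to phrase the argument as ``conjugacy classes of generators are constant along edges, $\Gamma$ connected'' or to write out the explicit conjugating element $\sigma_{v_{ext}}\sigma_v$ (respectively a length-$\le 4$ word) realising the conjugation for each $v$. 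I would favour the former, lighter formulation, noting the explicit conjugators only in passing, since later arguments (e.g. that the generators all map to Dehn twists, hence to a single mapping-class conjugacy class) will only need the qualitative statement.
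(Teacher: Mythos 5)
Your argument is correct and is essentially the paper's proof: the paper packages the edge step as a homomorphism $Br_3 \to G(\Gamma)$ sending $t_1,t_2$ to the two endpoint generators and invokes conjugacy of $t_1,t_2$ in $Br_3$, which is exactly your explicit conjugation $\sigma_w=(\sigma_v\sigma_w)\sigma_v(\sigma_v\sigma_w)^{-1}$, followed by connectivity of $\Gamma$ (immediate from the extremal vertex being adjacent to all others, as you note).
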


\begin{proof}
Each edge of $\Gamma$ defines a representation $Br_3 \to G(\Gamma)$ taking the standard generators of the braid group $t_1, t_2$ to the generators associated to the vertices $v,w$ at the ends of the edge. The generators $t_i \in Br_3$ are conjugate.
\end{proof}

One can also view Lemma \ref{lem:conjugate} as a consequence of the fact that the fundamental group of the discriminant complement $\bP H^0(\mathcal{O}_{\bP^4}(3)) \backslash \Delta_{3,3}$  is normally generated by a meridian.

There is a universal family of cubics $\bP^4 \times \scrM_{3,3} \supset \scrX \stackrel{\pi}{\longrightarrow} \scrM_{3,3}$, and a local system $R^3\pi_* \bZ \to \scrM_{3,3}$ with fibre $H^3(X;\bZ) \cong \bZ^{10}$, equipped with the skew-symmetric intersection pairing. Monodromy of this local system defines a natural representation $\pi_1(\scrM_{3,3}) \to Sp(10;\bZ)$, which one can compose with the quotient map $G(\Gamma) \to \pi_1(\scrM_{3,3})$.

 A \emph{transvection} in $Sp(10;\bZ)$ is any  matrix conjugate to $\left(\begin{smallmatrix}1 & 1\\0 & 1\end{smallmatrix}\right)\oplus \mathrm{Id}_8$. Note that 
\begin{enumerate}
\item a transvection is not conjugate to its inverse or any other power of a transvection,
\item  a transvection $A$ defines a one-dimensional subspace of $\bZ^{10} \otimes \bQ$ (the `direction' of transvection), via the image of $I-A$; and 
\item the fixed locus of a transvection is a hyperplane, so meets each positive-dimensional symplectic subspace non-trivially.
\end{enumerate}

\begin{lem} \label{lem:transvection}
Under $G(\Gamma) \to Sp(10;\bZ)$, each generator $\sigma_v$ is mapped to a transvection.
\end{lem}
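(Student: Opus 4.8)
The plan is to combine the local Picard--Lefschetz formula with a global statement about the monodromy. First I would recall that, in L\"onne's presentation, each standard generator $\sigma_v$ is (conjugate to) a meridian about a generic point of the discriminant $\Delta_{3,3} \subset \bP H^0(\mathcal{O}_{\bP^4}(3))$, at which the corresponding cubic threefold $X$ acquires a single ordinary double point. The Picard--Lefschetz theorem then identifies the image of $\sigma_v$ under $G(\Gamma) \to Sp(10;\bZ)$ with the transvection $T_{\delta_v}\colon x \mapsto x + \langle x, \delta_v \rangle\, \delta_v$ in the associated vanishing cycle $\delta_v \in H^3(X;\bZ)$, where $\langle\cdot,\cdot\rangle$ is the intersection form; since $\dim_{\mathbb{C}} X = 3$ is odd one has $\langle \delta_v,\delta_v\rangle = 0$, so $T_{\delta_v}$ is genuinely symplectic. (The same can be read off from Remark~\ref{rmk:Milnor}: $\sigma_v$ maps to a Dehn twist in the Lagrangian vanishing sphere, whose action on $H^3$ is Picard--Lefschetz. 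With the paper's orientation conventions this is a transvection rather than its inverse; the opposite choice changes nothing in Theorem~\ref{thm:main}.)

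The only substantive point is then that the vanishing cycle $\delta_v$ is \emph{primitive} in the lattice $H^3(X;\bZ)$. Granting this, I would finish by extending $\delta_v$ to a symplectic basis of $H^3(X;\bZ)$: choosing $\gamma$ with $\langle \gamma, \delta_v \rangle = 1$, the map $T_{\delta_v}$ fixes the $8$-dimensional symplectic subspace $\{\delta_v,\gamma\}^{\perp}$ pointwise and acts on the plane $\langle \delta_v,\gamma\rangle$ as $\left(\begin{smallmatrix}1 & 1\\0 & 1\end{smallmatrix}\right)$, so it is a transvection in the required sense. To prove primitivity, write $\delta_v = k_v\, \delta_v'$ with $\delta_v'$ primitive. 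By Lemma~\ref{lem:conjugate} all the $\sigma_v$ are conjugate in $G(\Gamma)$, and since conjugation by $g \in Sp(10;\bZ)$ sends $T_\delta$ to $T_{g\delta}$, all the $\delta_v$ lie in a single $Sp(10;\bZ)$-orbit up to sign; hence $k_v = k$ is independent of $v$. If $k \geq 2$, then every $T_{\delta_v}$ is congruent to the identity modulo $k^2$, so the image of $G(\Gamma) \to Sp(10;\bZ)$ --- generated by the $T_{\delta_v}$ since the $\sigma_v$ generate $G(\Gamma)$ --- lies in the kernel of reduction modulo $k^2$, a proper subgroup. This contradicts surjectivity of the monodromy $\pi_1(\scrM_{3,3}) \to Sp(10;\bZ)$ (a theorem of Beauville for families of smooth hypersurfaces, combined with L\"onne's surjection $G(\Gamma) \twoheadrightarrow \pi_1(\scrM_{3,3})$). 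Hence $k = 1$.

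Alternatively, primitivity can be obtained without Beauville's theorem: each $\delta_v$ is conjugate, up to sign, to a vanishing cycle of a Lefschetz pencil of cubic threefolds, and the classical Lefschetz theory of vanishing cycles says these generate $H^3(X;\bZ)$ --- and a generating set of the unimodular lattice $H^3(X;\bZ)$ all of whose elements are divisible by a common $k$ forces $k=1$. I expect the only real obstacle to be the \emph{integrality} of such a generation statement (as opposed to generation merely over $\bQ$), which is exactly the point that the appeal to Beauville's monodromy computation sidesteps. Everything else is routine: the local Picard--Lefschetz computation is standard, and the distinction between a transvection and its inverse is a matter of orientation conventions with no bearing on the applications.
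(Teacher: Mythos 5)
Your proposal follows the same route as the paper: identify $\sigma_v$ with the monodromy of a nodal degeneration, apply Picard--Lefschetz, and reduce to primitivity of the vanishing cycle $\delta_v$. The paper simply \emph{asserts} primitivity; you supply a justification, which is a welcome addition. However, your first primitivity argument rests on a false premise: the monodromy $\pi_1(\scrM_{3,3}) \to Sp(10;\bZ)$ is \emph{not} surjective. Beauville's theorem --- quoted in the paper's Lemma~\ref{lem:irred} --- says the image is the proper (finite-index) subgroup stabilising a quadratic refinement of the intersection form mod $2$, so "contradicts surjectivity" does not literally apply. The argument is repairable (that theta group contains transvections in primitive vectors, hence is not contained in the principal congruence subgroup of level $k^2 \geq 4$), but as written it conflicts with the paper itself. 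Your "alternative" route is the standard and correct one, and needs no repair: all vanishing cycles of a Lefschetz pencil lie in one monodromy orbit, hence have a common divisibility $k$, and they generate $H^3(X;\bZ)$ \emph{integrally} (for an odd-dimensional hypersurface the entire middle cohomology is vanishing cohomology, and integral generation is classical Lefschetz theory, cf.\ Remark~\ref{rmk:Milnor}); since a nonzero free abelian group is not contained in $k$ times itself for $k\geq 2$, we get $k=1$. I would make that the primary argument rather than the fallback.
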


\begin{proof}
The generators $\sigma_v$ correspond to monodromies of families of cubics where the central fibre acquires a single ordinary double point (node) singularity. The Lagrangian sphere vanishing cycle at the node represents a non-trivial and primitive homology class in $H_3(X;\bZ)$. The result then follows from the Picard-Lefschetz formula. 
\end{proof}

\begin{lem} \label{lem:irred}
The homological monodromy $G(\Gamma) \to Sp(10;\bZ)$ is irreducible.
\end{lem}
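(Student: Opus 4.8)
The plan is to run the classical Lefschetz--theory argument for irreducibility of vanishing cohomology. Since $H^3(\bP^4;\bQ)=0$, the full space $H^3(X;\bQ)$ is the vanishing (equivalently, primitive) cohomology of a cubic threefold, and the strategy has three ingredients: the vanishing cycles span it; they form a single monodromy orbit up to sign; and for such a representation every nonzero invariant subspace is the whole space.

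First I would note, from Lemma~\ref{lem:transvection} and the Picard--Lefschetz formula, that under $G(\Gamma)\to Sp(10;\bZ)$ each standard generator $\sigma_v$ acts as the transvection $T_{\delta_v}\colon x\mapsto x\pm\langle x,\delta_v\rangle\,\delta_v$ in a primitive vanishing class $\delta_v\in H^3(X;\bQ)$. Conjugating a transvection $T_\delta$ by $g\in Sp(10;\bZ)$ yields $T_{g\delta}$, so Lemma~\ref{lem:conjugate} (all $\sigma_v$ conjugate in $G(\Gamma)$) tells us that the lines $\bQ\,\delta_v$, hence the vectors $\delta_v$ up to sign, lie in a single orbit of the monodromy group.

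Second I would invoke the classical fact that the vanishing cycles span the vanishing cohomology of a smooth hypersurface (Lamotke; Voisin; SGA~7); as noted this says exactly that the $\delta_v$ span $H^3(X;\bQ)$. Equivalently: the span $U=\sum_v\bQ\,\delta_v$ and its symplectic complement $U^\perp$ are both monodromy-invariant, the generators $T_{\delta_v}$ fix $U^\perp$ pointwise, and the monodromy of the universal family of cubic threefolds has no nonzero invariant vector, so $U=H^3(X;\bQ)$. (This last point can alternatively be deduced from Beauville's computation that the monodromy image is Zariski-dense in the symplectic group, or from a direct rank computation with the explicit intersection form on a distinguished basis of vanishing cycles, cf.\ L\"onne \cite{Lonne} and Futaki--Ueda \cite{Futaki-Ueda}.)

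The rest is formal. Given a nonzero $G(\Gamma)$-invariant subspace $W\subseteq H^3(X;\bQ)$, non-degeneracy of the intersection pairing together with the spanning property produces some $\delta_v$ and some nonzero $w\in W$ with $\langle w,\delta_v\rangle\neq0$; then $(T_{\delta_v}-\mathrm{id})(w)=\pm\langle w,\delta_v\rangle\,\delta_v$ lies in $W$, so $\delta_v\in W$, and transitivity of the orbit (first step) forces every $\delta_w$ into $W$, whence $W=H^3(X;\bQ)$. The same argument applies after any extension of scalars, so the representation is in fact absolutely irreducible. The only genuine obstacle is the second step --- the spanning of the vanishing cycles (equivalently, the absence of monodromy invariants) --- which is classical but must be quoted in the correct generality; everything else reduces to linear algebra and Lemmas~\ref{lem:conjugate} and~\ref{lem:transvection}.
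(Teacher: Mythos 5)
Your argument is correct, but it is a genuinely different route from the paper's. The paper disposes of Lemma \ref{lem:irred} in one line by citing Beauville's computation \cite{Beauville-monodromie} that the monodromy image is the \emph{entire} stabiliser in $Sp(10;\bZ)$ of a quadratic refinement of the intersection form mod $2$ --- a finite-index, hence Zariski-dense, subgroup, from which irreducibility is immediate. You instead run the classical Picard--Lefschetz irreducibility argument: transvections in vanishing classes, a single orbit of vanishing cycles (via Lemma \ref{lem:conjugate} and the conjugation rule $gT_\delta g^{-1}=T_{g\delta}$), spanning of the vanishing cohomology, and the standard "an invariant subspace meeting one vanishing cycle non-degenerately contains them all" step. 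This uses strictly less than Beauville's theorem --- only the spanning statement, which the paper in fact verifies explicitly (the discussion before Lemma \ref{lem:14_full_rank}, and that lemma itself via the Hefez--Lazzeri intersection matrix, shows even the $14$ non-extremal vanishing cycles span the rank-$10$ lattice), so your one external input could be replaced by an internal reference. What Beauville's result buys the paper is brevity and a much stronger conclusion (the exact image); what your argument buys is self-containedness, the observation that absolute irreducibility holds, and applicability in settings where the precise monodromy image is unknown. Both proofs are sound.
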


\begin{proof}
This follows from Beauville's analysis \cite{Beauville-monodromie}.  He shows that the image of the monodromy comprises the subgroup of the symplectic group which preserves a quadratic refinement of the intersection pairing.
\end{proof}

Recall from Remark \ref{rmk:Milnor} that the singularity $\{ \sum_{i=1}^4 z_i^3 = 0\}$ has Milnor number $16$, and the 16 elements of $\{0,1\}^4$ index the vanishing cycles which form a basis of $H_3$ of the Milnor fibre. This lattice carries a skew-symmetric intersection pairing, which has kernel of rank 6; the vanishing cycles span the 10-dimensional $H_3$ of the projective closure of the Milnor fibre.  We will need the following mild strengthening of this fact.

\begin{lem} \label{lem:14_full_rank}
The sublattice of $\bZ^{10}$ spanned by the 14 non-extremal vertices has rank 10.
\end{lem}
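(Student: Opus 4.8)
The plan is to leverage the known rank-$10$ span of the full set of $16$ vanishing cycles (recalled just before the statement) and argue that removing the two extremal vertices $(0000)$ and $(1111)$ does not drop the rank. First I would set up notation: let $L \cong \bZ^{16}$ be the Milnor lattice with its skew pairing of corank $6$, let $e_{\bf i}$ for ${\bf i}\in\{0,1\}^4$ be the vanishing-cycle basis, and let $V \subset \bZ^{10}$ be the image of $L$ in $H^3(X;\bZ)$ (equivalently $L$ modulo the radical of the pairing). We know $V$ has rank $10$ and is spanned by the $16$ classes $\bar e_{\bf i}$. The claim is that the $14$ classes $\bar e_{\bf i}$ with ${\bf i}\neq (0000),(1111)$ already span a rank-$10$ sublattice (the statement only asserts full rank, not that they generate $\bZ^{10}$ as a lattice, which makes life easier).

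The key step is to exhibit $\bar e_{(0000)}$ and $\bar e_{(1111)}$ as rational linear combinations of the $14$ non-extremal classes. There are two natural routes. The first, more conceptual, uses Lemma~\ref{lem:transvection} and Lemma~\ref{lem:irred}: the span $W$ of the $14$ non-extremal vanishing cycles is invariant under the subgroup $H \leq G(\Gamma)$ generated by the corresponding $\sigma_v$ (each acts by a Picard--Lefschetz transvection in a direction lying in $W$, hence preserves $W\otimes\bQ$). If one can show $H$ still acts irreducibly on $\bZ^{10}\otimes\bQ$ — or even just that the $G(\Gamma)$-orbit of $W\otimes\bQ$ is everything while $W\otimes\bQ$ is $G(\Gamma)$-stable — then $W\otimes\bQ = \bZ^{10}\otimes\bQ$ and we are done. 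For $G(\Gamma)$-stability of $W\otimes\bQ$ it suffices that conjugating a non-extremal $\sigma_v$ by an extremal $\sigma_{v_{ext}}$ lands back in (the transvections generating) $W$; since $\sigma_{v_{ext}}\sigma_v\sigma_{v_{ext}}^{-1}$ is again a transvection whose direction is $\sigma_{v_{ext}}(\text{direction of }\sigma_v)$, one needs that vector to lie in $W\otimes\bQ$, i.e. that $W\otimes\bQ$ is preserved by the extremal transvections too — which would follow if the extremal direction vectors $\bar e_{v_{ext}}$ themselves lie in $W\otimes\bQ$. That is circular, so the cleaner line is the second, computational route.

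The second route is a direct linear-algebra computation in the explicit Milnor lattice. The intersection form on the Fermat-type singularity $\{\sum z_i^3\}$ (or $\{\sum z_i^3=1\}$) on the distinguished basis indexed by $\{0,1\}^4$ is given by an explicit Seifert-form / Picard--Lefschetz recipe (see Futaki--Ueda, as cited in Remark~\ref{rmk:Milnor}): $\langle e_{\bf i}, e_{\bf j}\rangle$ depends combinatorially on how ${\bf i}$ and ${\bf j}$ compare coordinatewise. I would compute the $16\times 16$ Gram matrix $Q$, confirm $\mathrm{rk}\,Q = 10$, and then check that the $14\times 14$ principal submatrix obtained by deleting the rows/columns of $(0000)$ and $(1111)$ still has rank $10$. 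Equivalently: show that the radical of the restricted form, intersected with $W$, together with the two extremal classes, does not exceed the $6$-dimensional radical of $Q$ — i.e. that deleting the extremal vertices does not enlarge the kernel. Concretely, $\bar e_{(0000)}$ pairs with some $\bar e_{\bf j}$, $\bf j$ non-extremal, and one writes down an explicit $\bQ$-linear relation expressing each extremal class via non-extremal ones using the $14$ equations $\langle v, e_{(0000)}\rangle = \langle v, \sum c_{\bf j} e_{\bf j}\rangle$; solvability is guaranteed precisely because the full $16$ span rank $10$ while the form has corank $6$, so no genuinely new kernel directions can appear from just two more vectors unless those two vectors were "responsible" for hitting rank $10$, which a glance at the Gram matrix rules out.

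The main obstacle is the second route's bookkeeping: correctly transcribing the intersection matrix for the $4$-dimensional Fermat singularity in the chosen ordering and verifying the two rank claims. This is a finite, mechanical check, but it is the only place where the specific combinatorics of $\Gamma$ (as opposed to soft representation-theoretic input) genuinely enters. In the writeup I expect one either invokes a direct matrix computation, or — more slickly — argues that each extremal vanishing cycle $e_{v_{ext}}$ is, already at the level of the Milnor lattice, congruent modulo the radical to an explicit integral combination of non-extremal ones, exhibited by a short sequence of the relations among vanishing cycles (e.g. realizing $v_{ext}$ as obtained from nearby cycles by a sequence of Picard--Lefschetz moves supported on non-extremal vertices). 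That would give the rank statement, and in fact slightly more, with no need to write out the whole Gram matrix.
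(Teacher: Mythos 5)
Your ``second route'' is exactly the paper's proof: it writes down the explicit intersection form on the distinguished basis (citing Hefez--Lazzeri) and verifies by direct computation that the submatrix indexed by the 14 non-extremal vertices has rank 10, which, as you note, bounds the rank of the image from below and suffices. The first, representation-theoretic route you sketch is (as you yourself observe) circular and is not used; the mechanical Gram-matrix check is the whole argument.
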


\begin{proof}
The intersection form amongst the vanishing cycles was computed in \cite{Hefez-Lazzeri}.  This is determined by skew-symmetry, along with the following rule: if ${\bf i}$ and ${\bf j}$ belong to $\{0,1\}^4$, and ${\bf i} < {\bf j}$ in the lexicographic order, then
\[
\langle v_{{\bf i}}, v_{{\bf j}}\rangle = \begin{cases} -(-1)^{\sum_\nu (i_\nu - j_\nu)} & \mathrm{if} \  (i_\nu - j_\nu)(i_\mu - j_\mu) \geq 0 \quad \forall \, \mu,\nu \\ 0 & \mathrm{otherwise} \end{cases}
\]
(compare to the definition of $G(\Gamma)$, whose definition involves the underlying unsigned pairing). 
Direct computation shows that the submatrix indexed by the 14 non-extremal vertices has rank 10. 
\end{proof}

\begin{lem} \label{lem:Br_8}
There is a sequence $v_1,\ldots,v_7 \subset \mathrm{Vert}(\Gamma)$ and a homomorphism $Br_8 \to G(\Gamma)$ taking $t_i \mapsto \sigma_{v_i}$. In particular, for each $v\in \mathrm{Vert}(\Gamma)$, there is a homomorphism $Br_6 \to Z(\sigma_v)$  to the centraliser of $\sigma_v$, taking standard generators of the braid group to conjugates of standard generators of $G(\Gamma)$.
\end{lem}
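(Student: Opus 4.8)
The goal is to exhibit an explicit length-$7$ chain of vertices $v_1,\dots,v_7$ in $\Gamma$ — meaning consecutive vertices are joined by an edge, non-consecutive vertices are not joined by an edge — so that the Artin relations \eqref{eqn:Artin_relations} restricted to these vertices are exactly the braid relations of $Br_8$ with generators $t_1,\dots,t_7$. Concretely, the plan is to write down the $16$ elements of $\{0,1\}^4$ in lexicographic order, recall that ${\bf i}$ and ${\bf j}$ are adjacent in $\Gamma$ iff they are comparable coordinatewise (no pair of coordinates takes values $(0,1)$ and $(1,0)$), and then search for a chain of length $7$ inside this graph. A natural candidate is a maximal chain in the Boolean lattice together with a suitable extension: for instance follow a monotone path such as $(0000), (1000), (1100), (1110), (1111)$ — but this has length only $5$ and moreover its vertices are pairwise comparable, hence the induced subgraph is a complete graph $K_5$, not a path. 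So instead I would look for a path that "wiggles", e.g. something like $(0001), (0011), (0010), (0110), (0100), (1100), (1000), (1001)$: one checks that consecutive terms are comparable (they differ in a single coordinate or are nested), while terms two apart are incomparable because flipping coordinates back and forth creates an opposite $(0,1)/(1,0)$ pair. The verification that this (or a similar) explicit list is an induced $A_7$ Dynkin subgraph is a finite check on $\binom{8}{2}=28$ pairs, which I would carry out and tabulate; this is the only real content, and Figure~\ref{Fig:2d_case} suggests the $3$-dimensional analogue already contains such zigzag paths, so the $4$-dimensional hypercube certainly does.

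Given such a chain, the subgroup of $G(\Gamma)$ generated by $\sigma_{v_1},\dots,\sigma_{v_7}$ satisfies precisely the defining relations of $Br_8$ (braid relation along each edge of the $A_7$ path, commutation for each non-edge), so the assignment $t_i \mapsto \sigma_{v_i}$ extends to a homomorphism $Br_8 \to G(\Gamma)$ by the universal property of the Artin presentation of $Br_8$. Note I do not need this map to be injective — only that it is well-defined — so no further work on $G(\Gamma)$ is required.

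For the second assertion, fix any $v \in \mathrm{Vert}(\Gamma)$. By Lemma~\ref{lem:conjugate} all standard generators are conjugate, so it suffices to produce the desired $Br_6 \to Z(\sigma_v)$ for one convenient choice of $v$ and then conjugate. Take $v$ to be an extremal vertex, say $v = v_{ext} = (0000)$ (Remark~\ref{rmk:extremal}): then $v$ is joined by an edge to all $15$ other vertices, so for \emph{any} chain of length $\leq 5$ among the remaining vertices, none of whose members is adjacency-related in a way that matters, the braid relation $\sigma_{v}\sigma_{v_i}\sigma_v = \sigma_{v_i}\sigma_v\sigma_{v_i}$ holds for all $i$. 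That does not immediately give commutation with $\sigma_v$. The cleaner route: inside the $Br_8 \to G(\Gamma)$ just constructed (choosing, if convenient, a chain $v_1,\dots,v_7$ in which $v_1$ is extremal, or relabelling), the full twist $\Delta^2 \in Br_8$ is central; more simply, in $Br_8$ with generators $t_1,\dots,t_7$, the subgroup generated by $t_3,\dots,t_7$ (or more precisely by the standard generators disjoint from $t_1$) together with a band generator commutes with $t_1$ — indeed the centraliser of $t_1$ in $Br_8$ contains a copy of $Br_6$ generated by $t_3,\dots,t_7$ and the element $(t_2 t_1 t_2)$-conjugate band, on strands $\{3,4,\dots,8\}$ disjoint from strands $\{1,2\}$. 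Pushing this $Br_6 \hookrightarrow Z_{Br_8}(t_1)$ forward along $Br_8 \to G(\Gamma)$ lands in $Z_{G(\Gamma)}(\sigma_{v_1})$, and its generators are the images of braid generators, hence conjugates in $G(\Gamma)$ of standard generators $\sigma_w$. Finally conjugate by the element of $G(\Gamma)$ carrying $v_1$ to the given $v$ (Lemma~\ref{lem:conjugate}) to obtain $Br_6 \to Z(\sigma_v)$ with the stated property.

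The main obstacle is purely combinatorial: confirming that the specific $8$-tuple of hypercube vertices I choose really does induce an $A_7$ subgraph of $\Gamma$ (no missing edges along the path, no extra edges between non-consecutive vertices). Everything after that — the existence of the homomorphism from $Br_8$, the location of a $Br_6$ in the centraliser of $t_1$ inside $Br_8$, and the transport across the conjugacy of Lemma~\ref{lem:conjugate} — is formal. I would present the chosen chain explicitly (likely with a small figure or an adjacency table) so the reader can check the $28$ pairwise conditions directly.
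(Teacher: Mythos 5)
Your proposal is essentially the paper's proof: exhibit an explicit induced $A_7$ path in the hypercube graph (the paper uses $(0001),(0101),(0100),(0110),(0010),(1010),(1000)$), invoke the Artin presentation of $Br_8$ to get the homomorphism, observe that $t_3,\dots,t_7$ commute with $t_1$ to obtain $Br_6\to Z(\sigma_{v_1})$, and transport to general $v$ by the conjugacy of Lemma~\ref{lem:conjugate}. One slip: your candidate list has \emph{eight} vertices and its endpoints $(0001)$ and $(1001)$ are adjacent (their difference is $(-1,0,0,0)$), so as written it is an induced $8$-cycle --- precisely the configuration of Lemma~\ref{lem:affine_braid} --- rather than a linear chain; dropping the final $(1001)$ does yield a valid induced $A_7$ path, and the count of pairs to check is $\binom{7}{2}=21$, not $28$. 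Also, $t_3,\dots,t_7$ already generate the required $Br_6$ inside $Z_{Br_8}(t_1)$, so the digression about an additional ``band generator'' is unnecessary (and would give $Br_7$, not $Br_6$).
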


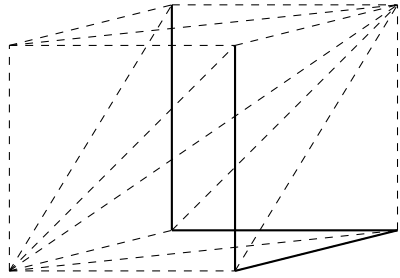
\begin{figure}[ht]
\begin{center} 
\begin{tikzpicture}[scale=0.03]

\draw[dashed] (0,100) -- (100,100);
\draw[dashed] (0,0) -- (0,100);
\draw[thick] (100,0) -- (100,100); 
\draw[dashed] (0,0) -- (100,0);

\draw[dashed] (0,0) -- (72,18);
\draw[dashed]  (100,0) -- (172,18);
\draw [dashed] (0,100) -- (72,118);
\draw[dashed]  (100,100) -- (172,118);
\draw[thick] (72,18) -- (72,118); 

\draw[dashed]  (0,0) -- (100,100); 
\draw[thick] (100,0) -- (172,18); 

\draw[dashed]  (0,0) -- (172,18); 
\draw[dashed]  (0,100) -- (172,118);
\draw[dashed]  (0,0) -- (72,118);
\draw[dashed]  (0,0) -- (172,118);

\draw[thick] (72,18) -- (172,18); 
\draw[dashed]  (72,118) -- (172,118);
\draw[dashed]  (72,18) -- (172,118);
\draw[dashed]  (172,18) -- (172,118);
\draw[dashed]   (100,0) -- (172,118);

\end{tikzpicture}
\end{center}
\caption{A copy of $Br_6$\label{Fig:2d_case_braid}}
\end{figure}

\begin{proof} An ordered sequence of vertices $v_1,\ldots,v_k$ defines a representation $Br_{k+1} \to G(\Gamma)$ exactly when the subgraph spanned by the $v_i$ inside $\Gamma$ is a linear chain, so $v_i$ is joined by an edge to $v_{i+1}$ for $1 \leq i \leq k-1$ and there are no other edges between these vertices.  One checks the following ordered sequence of vertices has this property:
\[
(0001) \to (0101) \to (0100) \to (0110) \to (0010) \to (1010) \to (1000).
\] 
The subchain spanned by the last five elements defines a copy of $Br_6$ which commutes with the generator associated to $(0001)$.   The graph from Figure \ref{Fig:2d_case} is isomorphic to the subgraph of $G(\Gamma)$ of  vertices with last co-ordinate $0$; the copy of $Br_6$ in the centraliser of $\sigma_v$ for $v=(0001)$, i.e. a chain of five vertices spanning no triangles, is indicated in Figure \ref{Fig:2d_case_braid}.
\end{proof}

\begin{lem}\label{lem:nonextremal_commute}
For each $v\neq v_{ext}$, there is some $i\in \{1,3,4,5,6,7\}$ with $[\sigma_v,\sigma_{v_i}] = 1$.
\end{lem}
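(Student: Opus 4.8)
The plan is to reduce the statement to an elementary remark about subsets of $\{1,2,3,4\}$. First I would unwind the definition of $\Gamma$ together with the Artin relations \eqref{eqn:Artin_relations}: the generators $\sigma_v$ and $\sigma_w$ commute whenever there is no edge between $v$ and $w$, and by construction there is no edge between ${\bf i}$ and ${\bf j}$ exactly when some pair of coordinates $\mu,\nu$ satisfies $(i_\mu,i_\nu)=(1,0)$ and $(j_\mu,j_\nu)=(0,1)$. Writing $\mathrm{supp}(v)=\{\mu: v_\mu=1\}\subseteq\{1,2,3,4\}$, this says precisely that $[\sigma_v,\sigma_w]=1$ whenever $\mathrm{supp}(v)$ and $\mathrm{supp}(w)$ are incomparable for inclusion. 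Note I only need this as a \emph{sufficient} condition for commuting, so no facts about the Artin group beyond its presentation are required.

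Next I would record the supports of the vertices from Lemma \ref{lem:Br_8}: with $v_1=(0001)$, $v_3=(0100)$, $v_4=(0110)$, $v_5=(0010)$, $v_6=(1010)$, $v_7=(1000)$, the corresponding supports are $\{4\},\{2\},\{2,3\},\{3\},\{1,3\},\{1\}$. The key point is that the sets indexed by $\{1,3,4,5,6,7\}$ already include all four singletons $\{1\},\{2\},\{3\},\{4\}$, realised as $\mathrm{supp}(v_7),\mathrm{supp}(v_3),\mathrm{supp}(v_5),\mathrm{supp}(v_1)$ respectively.

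Finally, given a non-extremal vertex $v$, its support $S=\mathrm{supp}(v)$ is a nonempty proper subset of $\{1,2,3,4\}$, and I would exhibit a singleton incomparable to $S$. If $|S|\geq 2$, choose any $k\in\{1,2,3,4\}\setminus S$ (possible since $S$ is proper): then $\{k\}\not\subseteq S$ and $S\not\subseteq\{k\}$, so $\{k\}$ and $S$ are incomparable. If $S=\{j\}$ is itself a singleton, choose any $k\neq j$, so again $\{k\}$ and $\{j\}$ are incomparable. In either case $\{k\}=\mathrm{supp}(v_i)$ for a suitable $i\in\{1,3,5,7\}\subseteq\{1,3,4,5,6,7\}$, and hence $[\sigma_v,\sigma_{v_i}]=1$, as required. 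There is no real obstacle here; the only point needing care is the case $|S|=1$, where ``incomparable to $S$'' differs from ``disjoint from $S$'' and so is handled separately, and one should observe that the extremal vertices $(0000)$ and $(1111)$ are genuinely excluded since $\varnothing$ and $\{1,2,3,4\}$ are comparable to every subset.
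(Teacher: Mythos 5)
Your proof is correct and amounts to carrying out the "straightforward check" that the paper leaves to the reader, organized neatly via the observation that $[\sigma_v,\sigma_w]=1$ whenever $\mathrm{supp}(v)$ and $\mathrm{supp}(w)$ are incomparable, and that the singleton supports $\{1\},\{2\},\{3\},\{4\}$ all occur among $v_1,v_3,v_5,v_7$. Nothing further is needed.
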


\begin{proof} This is a straightforward check.
\end{proof}

The `affine braid group' $Br_{k+1}^{aff}$ is generated by elements $t_1,\ldots,t_k$ which braid cyclically, so  in place of asking that $t_1$ and $t_{k}$ commute, one instead imposes that $t_1 t_{k} t_1 = t_{k} t_1 t_{k}$.
  
\begin{lem} \label{lem:affine_braid}
The representation $Br_8 \to G(\Gamma)$  of Lemma \ref{lem:Br_8} extends to a representation $Br_9^{aff} \to G(\Gamma)$.
\end{lem}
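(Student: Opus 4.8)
The plan is to exhibit an $8$th vertex $v_8 \in \mathrm{Vert}(\Gamma)$ that closes the chain $v_1,\ldots,v_7$ of Lemma~\ref{lem:Br_8} into a cycle, i.e.\ so that the subgraph spanned by $v_1,\ldots,v_7,v_8$ inside $\Gamma$ is precisely an $8$-cycle: $v_8$ is joined by an edge to $v_1$ and to $v_7$, there are no other edges from $v_8$ to the $v_i$, and (crucially) adding $v_8$ introduces no new edges or triangles among the original seven. Recall from the proof of Lemma~\ref{lem:Br_8} that the chain in question is
\[
(0001) \to (0101) \to (0100) \to (0110) \to (0010) \to (1010) \to (1000),
\]
so I need $v_8$ adjacent to both $(0001)$ and $(1000)$ but non-adjacent to $(0101),(0100),(0110),(0010),(1010)$. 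The natural candidate to try first is $v_8 = (1001)$: it differs from $(0001)$ only in the first coordinate and from $(1000)$ only in the last coordinate, so by the edge rule (no pair of places where the two tuples take opposite values $(0,1)$ and $(1,0)$) it is adjacent to both of these; meanwhile, against $(0101)$ the places $1$ and $4$ give $(1,0)$ versus $(0,1)$ — opposite values — so there is no edge, and one checks similarly that $(1001)$ fails to be adjacent to each of $(0100)$, $(0110)$, $(0010)$, $(1010)$. If $(1001)$ does not work I would instead try $(0011)$ or $(1100)$; one of the ``corner-type'' vertices will do, since these are designed to interact with only the neighbouring edges of the sub-hypercube structure.

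Granting such a $v_8$, the chain $v_1,\ldots,v_7,v_8,v_1$ is a graph-theoretic $8$-cycle with no chords and no triangles, and the standard argument (as in Lemma~\ref{lem:Br_8}) that a chordless linear chain of $k$ vertices yields a homomorphism $Br_{k+1}\to G(\Gamma)$ sending $t_i\mapsto\sigma_{v_i}$ upgrades, for a chordless cycle of $8$ vertices, to a homomorphism $Br_9^{aff}\to G(\Gamma)$ sending the $i$th standard generator of the affine braid group to $\sigma_{v_i}$: each consecutive pair $\sigma_{v_i},\sigma_{v_{i+1}}$ (indices mod $8$, so including the pair $\sigma_{v_8},\sigma_{v_1}$) braids by the braid relation~\eqref{eqn:Artin_relations}, and each non-consecutive pair commutes, which is exactly the presentation of $Br_9^{aff}$. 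This extends the representation of Lemma~\ref{lem:Br_8} because $t_1,\ldots,t_7$ (hence $t_8$ is not in the image of the inclusion $Br_8\hookrightarrow Br_9^{aff}$, but the images of $t_1,\ldots,t_7$) go to the same elements $\sigma_{v_1},\ldots,\sigma_{v_7}$ as before.

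The only real work is the verification that a suitable $v_8$ exists; this is a finite check in the $16$-vertex graph $\Gamma$, entirely parallel to (and no harder than) the check already carried out in Lemma~\ref{lem:Br_8}, so I expect no genuine obstacle — the potential pitfall is simply that one must confirm $v_8$ introduces no unwanted adjacency to any of the five interior vertices of the chain, which is why I would record the coordinates of all the relevant pairwise comparisons explicitly rather than assert them.
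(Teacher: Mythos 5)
Your proposal is correct and is essentially identical to the paper's proof: the paper closes the chain with precisely the vertex $(1001)$, observing that the resulting $8$-cycle spans no other edges of $\Gamma$, which is exactly your verification. The hedging about alternative candidates is unnecessary — $(1001)$ works, as your explicit coordinate checks confirm.
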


\begin{proof}  The sequence of vertices
\[
(0001) \to (0101) \to (0100) \to (0110) \to (0010) \to (1010) \to (1000) \to (1001) \to (0001)
\]
forms a cycle in $\Gamma$ which spans no other edges.
\end{proof}

\subsection{Rigidity results for mapping class groups}

Let $\Sigma$ be an oriented surface, perhaps with boundary. We will write $\Sigma_g^b$ when we wish to specify that the  surface has genus $g$ and $b$ boundary components. We consider simple closed curves up to isotopy, and they are assumed to be homotopically non-trivial.
We say that a collection of simple closed curves $\{\gamma_1,\ldots,\gamma_r\} \subset \Sigma$ is \emph{admissible} if they are pairwise non-isotopic and may be realised simultaneously pairwise disjointly.  For a simple closed curve $\gamma \subset \Sigma$, we write $\tau_{\gamma}$ for the Dehn twist along $\gamma$.

The mapping class group $\Gamma(\Sigma) := \pi_0\Diff^+(\Sigma)$. The maps in $\Gamma(\Sigma)$ may permute boundary components, and isotopies are not required to fix the boundary. We write $\Gamma(\Sigma,C)$ for a subset $C\subset \partial \Sigma$ if we wish to consider maps which are the identity on $C$ and isotopies fixing $C$ pointwise; and we write $\Gamma(\Sigma_g^b)$ the mapping class group $\Gamma(\Sigma, \partial \Sigma)$ when $\Sigma = \Sigma_g^b$. Thurston's classification of surface diffeomorphisms \cite{FLP} yields the following trichotomy for an element $f$ of the mapping class group of $\Sigma$: 

\begin{enumerate}
\item (periodic) $f$ is isotopic to a periodic diffeomorphism;
\item (pseudo-Anosov) there is no simple closed curve $\gamma \subset \Sigma$ not isotopic to a component of $\partial \Sigma$ and positive integer $m>0$ with $f^m(\gamma) \simeq \gamma$; 
\item (reducible) $f$ preserves a non-empty admissible system of curves.
\end{enumerate}

To clarify the first case, if $\Sigma$ has non-empty boundary, then $f$ is isotopic to a periodic map through diffeomorphisms which don't preserve the boundary pointwise.  If $f$ is periodic and fixes a component $C\subset \partial\Sigma$ pointwise, then there are coprime integers $m$ and $l$ such that $f^m$ is isotopic in $\Diff(\Sigma,C)$ to the Dehn twist $\tau_C^l$ (in particular here $l\neq 0$). 

\begin{lem} \label{lem:mccarthy}
The centralizer of a pseudo-Anosov map $\psi \in \Gamma(\Sigma)$ is virtually cyclic. Each element of the centralizer is itself either pseudo-Anosov or periodic. \end{lem}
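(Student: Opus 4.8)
The plan is to reduce the statement to standard facts about pseudo-Anosov maps and centralisers in mapping class groups, which themselves flow from Thurston's classification (already invoked in the excerpt). So I would first recall the structure theory: a pseudo-Anosov $\psi$ on $\Sigma$ carries a pair of transverse measured foliations $(\mathcal{F}^u, \mathcal{F}^s)$ with dilatation $\lambda>1$, and any $g$ commuting with $\psi$ must permute the finite set of data attached to this pair — the singular points, their prongs, and the projective classes of the foliations. The key input is that the stable/unstable \emph{projective} foliation classes are uniquely determined by $\psi$ (they are the attracting and repelling fixed points of $\psi$ acting on the space of projective measured foliations $\mathcal{PMF}(\Sigma)$), so any $g$ in the centraliser fixes this pair of points of $\mathcal{PMF}(\Sigma)$. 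Hence $g$ cannot be reducible (a reducible map has no fixed point of this ``North–South'' type transverse to a genuine pseudo-Anosov pair) and cannot be pseudo-Anosov with \emph{different} foliations; so $g$ is either periodic, or pseudo-Anosov with the same pair of measured foliations up to scale.

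Next I would set up the homomorphism that makes ``virtually cyclic'' precise. Sending $\psi \mapsto \lambda(\psi) \in \mathbb{R}_{>0}$ the dilatation, extended multiplicatively, gives a homomorphism from the subgroup $Z(\psi) \leq \Gamma(\Sigma)$ (where we declare $\lambda = 1$ on the periodic elements) to $\mathbb{R}_{>0}$, after checking it is well-defined: an element $g$ preserving the projective foliation pair scales the transverse measures by some $\mu(g)>0$ on $\mathcal{F}^u$ and by $\mu(g)^{-1}$ on $\mathcal{F}^s$ (area preservation), and $g$ is periodic exactly when $\mu(g)=1$, by Thurston's classification. The image lands in the group generated by the dilatations realised, which — and this is the one genuinely non-elementary point — is a \emph{discrete} subgroup of $\mathbb{R}_{>0}$: dilatations of pseudo-Anosov maps on a fixed surface are algebraic integers whose degree is bounded in terms of the topology of $\Sigma$, and more directly one shows the set $\{\log\lambda : \lambda \text{ a dilatation}\}$ has a positive lower bound (Penner's theorem), which forces the image of $\mu$ to be infinite cyclic or trivial. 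Finally, the kernel of $\mu$ restricted to $Z(\psi)$ consists of periodic mapping classes all of which fix the same foliation pair, hence all of which lie in the (finite) symmetry group of that pair of foliations — a finite group, since a periodic map preserving a measured foliation with its singularity data is determined up to finitely many choices. Therefore $Z(\psi)$ is (finite)-by-cyclic, i.e. virtually cyclic, and every element is periodic or pseudo-Anosov as argued.

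For the write-up I would actually just cite the standard references rather than redo this: the result and proof are in Ivanov's book and in the Fathi–Laudenbach–Poénaru volume, and it is Theorem 2.4 (or thereabouts) in many treatments; McCarthy's paper ``Normalizers and centralizers of pseudo-Anosov mapping classes'' is the canonical source and the reason for the lemma's name. So the proof in the paper should read roughly: ``See \cite{FLP}; the statement is due to McCarthy, and also follows from the fact that a pseudo-Anosov $\psi$ acts on $\mathcal{PMF}(\Sigma)$ with North–South dynamics, so its centraliser fixes the two fixed points, hence preserves the associated foliation pair, hence maps to $\mathbb{R}_{>0}$ via the dilatation with finite kernel and discrete (so cyclic) image.''

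The main obstacle, conceptually, is the discreteness of the set of dilatations — without it one only gets that $Z(\psi)$ is metabelian-ish rather than virtually cyclic. Everything else is soft once Thurston's classification and the North–South dynamics on $\mathcal{PMF}$ are granted. Since the paper treats Thurston's classification as a black box already (it is cited as \cite{FLP} just above), the cleanest move is to treat this lemma the same way and attribute it to McCarthy, sketching only the North–South dynamics argument for orientation. The second point needing a word of care is the case of surfaces with boundary, where ``periodic'' is understood in the relative sense described in the excerpt (powers being boundary Dehn twists); this does not change the argument but should be flagged so the reader knows $Z(\psi)$ virtually cyclic is compatible with the boundary-twist subgroup, which is itself cyclic.
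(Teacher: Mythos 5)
Your proposal is correct and matches the paper, whose entire proof of this lemma is the citation ``See \cite{McCarthy}'' --- exactly the move you recommend in your final paragraph. The North--South dynamics sketch and the dilatation homomorphism you outline are the standard argument behind that citation, so there is nothing to add beyond noting that the paper treats this as a black box just as it does Thurston's classification.
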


\begin{proof} See \cite{McCarthy}. \end{proof}

A curve $\gamma \subset \Sigma$ is a `reducing curve' for $f$ if $f^m(\gamma) \simeq \gamma$ for some $m\in \bZ_{>0}$, and the collection $\{\gamma, f(\gamma), f^2(\gamma),\ldots, f^{m-1}(\gamma)\}$ is admissible. 
A reducible mapping class $f$ has a \emph{canonical} `essential reduction system' of curves $\sigma(f)$, introduced in \cite{BLM}; a simple closed curve $\gamma$ belongs to $\sigma(f)$ precisely if it is a reducing curve for $f$, and if no other reducing curve for $f$ has non-trivial geometric intersection number with $\gamma$.  Note that $\sigma(f)$ is an admissible collection, i.e. the curves in $\sigma(f)$ have vanishing pairwise geometric intersection number.  

\begin{lem} \label{lem:commute_preserve_sigma}
If $[f,g]=1$ then $g(\sigma(f)) = \sigma(f)$.  Moreover, the curves of $\sigma(f)$ and $\sigma(g)$ have vanishing pairwise geometric intersection number, so have admissible union.
\end{lem}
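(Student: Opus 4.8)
The plan is to deduce both assertions from the characterizing property of the canonical reduction system $\sigma(f)$ together with the naturality of that construction under the mapping class group action. First I would record the naturality statement: for any $h \in \Gamma(\Sigma)$ and any reducible $f$, one has $\sigma(hfh^{-1}) = h(\sigma(f))$. This is immediate from the definition, since $\gamma$ is a reducing curve for $f$ (with period $m$) if and only if $h(\gamma)$ is a reducing curve for $hfh^{-1}$ (with the same period), and geometric intersection numbers are preserved by $h$; thus the defining condition ``$\gamma$ is a reducing curve and no reducing curve meets it essentially'' transports verbatim. Granting this, if $[f,g]=1$ then $f = gfg^{-1}$, so $\sigma(f) = \sigma(gfg^{-1}) = g(\sigma(f))$, which is the first claim. (One should note that this argument is vacuous unless $f$ is reducible; if $f$ is periodic or pseudo-Anosov then $\sigma(f) = \emptyset$ and there is nothing to prove, and likewise for $g$.)

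For the second claim I would argue as follows. If either $f$ or $g$ is non-reducible, the corresponding reduction system is empty and the disjointness assertion is trivial, so assume both are reducible. The key point is that every curve of $\sigma(f)$ is fixed, up to isotopy, by $g$: indeed $g(\sigma(f)) = \sigma(f)$ as a set, but in fact one can say more — since $g$ commutes with $f$, and $\sigma(f)$ is the \emph{canonical} system, a standard refinement (see \cite{BLM} or \cite{McCarthy}) shows $g$ does not permute the components of $\sigma(f)$ nontrivially once one passes to a bounded power, and in any case the relevant curves can be realized $g$-equivariantly. Now let $\delta \in \sigma(g)$. Then $\delta$ is a reducing curve for $g$. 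I claim $\delta$ has zero geometric intersection number with every $\gamma \in \sigma(f)$. Suppose not. Since $g$ preserves the isotopy class of $\gamma$ (or a bounded-power orbit of such curves), and $\delta$ is a reducing curve for $g$, we can consider the $g$-orbit of $\gamma$; the curve $\gamma$ (being preserved by a power of $g$) is itself a reducing curve for $g$. But then $\gamma$ is a reducing curve for $g$ that meets $\delta$ essentially, contradicting that $\delta \in \sigma(g)$ — unless $\gamma \in \sigma(g)$, in which case $i(\gamma,\delta) = 0$ automatically because $\sigma(g)$ is admissible. Symmetrically, using that $f$ preserves $\sigma(g)$ and that curves of $\sigma(g)$ are reducing curves for $f$, we get that any $\gamma \in \sigma(f)$ meeting some $\delta \in \sigma(g)$ essentially would force $\delta$ to be a reducing curve for $f$ meeting $\gamma$, contradicting $\gamma \in \sigma(f)$ unless already $i(\gamma,\delta)=0$. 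Hence $i(\gamma,\delta) = 0$ for all $\gamma \in \sigma(f)$, $\delta \in \sigma(g)$, so $\sigma(f) \cup \sigma(g)$ has vanishing pairwise geometric intersection numbers; after discarding duplicate isotopy classes this is an admissible collection.

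The step I expect to be the main obstacle is the assertion that a curve $\gamma \in \sigma(f)$ is itself a reducing curve for the commuting map $g$ (and vice versa). The set equality $g(\sigma(f)) = \sigma(f)$ only says $g$ permutes $\sigma(f)$; to conclude $\gamma$ is a reducing curve for $g$ one needs that some positive power $g^k$ fixes the isotopy class of $\gamma$, which follows because $\sigma(f)$ is a finite set so the permutation induced by $g$ has finite order, and moreover the orbit $\{\gamma, g(\gamma),\ldots,g^{k-1}(\gamma)\} \subset \sigma(f)$ is admissible since $\sigma(f)$ itself is. This is exactly the definition of a reducing curve for $g$, so the obstacle dissolves once one is careful to invoke admissibility of $\sigma(f)$ at the right moment. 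The remainder is bookkeeping with geometric intersection numbers and the minimality clause in the definition of the essential reduction system.
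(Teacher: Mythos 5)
Your argument is correct: the paper's own ``proof'' is just the citation \cite{BLM}, and what you have written is a faithful reconstruction of the standard argument there --- naturality $\sigma(hfh^{-1})=h(\sigma(f))$ gives the first claim, and for the second, each $\gamma\in\sigma(f)$ has finite $g$-orbit inside the admissible collection $\sigma(f)$, hence is a reducing curve for $g$, so the minimality clause in the definition of $\sigma(g)$ forces $I(\gamma,\delta)=0$ for every $\delta\in\sigma(g)$. The middle paragraph's appeal to a ``standard refinement'' is unnecessary hand-waving, but your final paragraph supplies the clean justification, so there is no gap.
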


\begin{proof} See \cite{BLM}.
\end{proof}

A simple closed curve $\gamma \subset \Sigma$ defines two (perhaps co-incident) subsurfaces $\Sigma_{l,r} \subset \Sigma$ lying to the left / right of $\gamma$.  These subsurfaces may have other boundary components (which may or may not be boundary components of $\Sigma$ itself).

\begin{prop} \label{prop:characterise_reduction_curves}
Suppose $f$ preserves the simple closed curve $\gamma$ and also preserves its orientation. Then $\gamma \subset \sigma(f)$ precisely in the following three cases:
\begin{enumerate}
\item $f$ is pseudo-Anosov on at least one of $\Sigma_l$ or $\Sigma_r$;
\item $f$ is periodic of common period $m$ on $\Sigma_l$ and $\Sigma_r$, and $f^m|_{\Sigma_l \cup \Sigma_r}$ agrees with a non-trivial power of the Dehn twist on $\gamma$;
\item $f$ is periodic of orders $m_l \neq m_r$ on $\Sigma_l$ respectively $\Sigma_r$.
\end{enumerate}
\end{prop}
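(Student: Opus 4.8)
The plan is to analyze, case by case in Thurston's trichotomy, the restrictions $f|_{\Sigma_l}$ and $f|_{\Sigma_r}$, and to read off exactly when $\gamma$ satisfies the defining property of the essential reduction system: $\gamma$ is a reducing curve for $f$, and no other reducing curve has nonzero geometric intersection number with $\gamma$. Since $f$ preserves $\gamma$ with orientation, $\gamma$ is automatically a reducing curve (with $m=1$), so the content is in showing $\gamma$ cannot be ``crossed'' by another reducing curve precisely in the three listed situations, and can be so crossed otherwise.

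First I would set up the elementary direction: if $f$ is \emph{not} in one of the three cases, then I produce a reducing curve meeting $\gamma$ essentially, so $\gamma \notin \sigma(f)$. The complementary cases to the list are: (a) $f$ is periodic on both sides with a \emph{common} period $m$ and $f^m$ acts trivially (as the identity, not a nontrivial twist) near $\gamma$ on $\Sigma_l \cup \Sigma_r$; (b) $f$ is reducible but not pseudo-Anosov on either side. In case (a), after passing to the power $m$, $f^m$ is supported away from a collar of $\gamma$, so one can isotope $\gamma$ off itself under $f$ — more precisely there is a curve $\delta$ crossing $\gamma$ with $f^m(\delta) \simeq \delta$ and $\{\delta, f(\delta),\dots\}$ admissible, using that $f^m$ is isotopic to the identity on a neighbourhood of $\gamma$; this exhibits $\gamma$ as inessential to the reduction. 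In case (b), some reducing curve $\delta$ for $f|_{\Sigma_l}$ (or $\Sigma_r$) can be chosen to cross $\gamma$ — intuitively, the ``genuine'' reduction locus of $f$ on one side is not $\partial$-parallel, so it is not forced to be disjoint from $\gamma$ — again yielding $\gamma \notin \sigma(f)$. This uses only the definition of $\sigma(f)$ from \cite{BLM} and standard facts about the canonical reduction system of a restricted mapping class.

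For the converse — in each of the three listed cases $\gamma$ \emph{does} lie in $\sigma(f)$ — the argument is: suppose $\delta$ is any reducing curve for $f$ with $i(\gamma,\delta) \neq 0$; I derive a contradiction. By definition of a reducing curve, the $f$-orbit $\{\delta, f(\delta),\dots, f^{m'-1}(\delta)\}$ is admissible, and one knows (again from \cite{BLM}) that this orbit can be completed to a reduction system realized by an $f$-invariant multicurve, on whose complementary pieces $f$ is periodic or pseudo-Anosov. In case (1), $f$ is pseudo-Anosov on $\Sigma_l$ say; a pseudo-Anosov map on a subsurface admits no invariant isotopy class of curves in the interior of that subsurface, so $\delta$ cannot be pushed into $\Sigma_l$; combined with $i(\gamma,\delta)\neq 0$ and invariance of the orbit this forces the orbit of $\delta$ to fill or cross in a way incompatible with $f$ being pseudo-Anosov there — here I would invoke \Cref{lem:mccarthy}-type rigidity or the uniqueness of the invariant foliations to conclude. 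In cases (2) and (3), $f$ is periodic on both sides, and the point is that the only curves $f$ can ``reduce along'' near $\gamma$ are $\gamma$ itself (case 2, because $f^m$ being a nontrivial Dehn power on $\gamma$ pins $\gamma$ down as genuinely essential — any crossing curve would have unbounded $f^{mk}$-orbit) or are forced by the mismatch of rotation numbers $m_l \neq m_r$ (case 3, where $\gamma$ is the ``wall'' between two periodic pieces of different order and hence canonical by the characterization of reduction curves for periodic-on-each-side maps).

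The main obstacle I expect is the converse in case (1): ruling out \emph{every} crossing reducing curve when $f$ is merely pseudo-Anosov on \emph{one} side (and could be arbitrary on the other). One must combine the no-invariant-curves property of pseudo-Anosov maps on $\Sigma_l$ with a bookkeeping argument on how an $f$-invariant multicurve containing the orbit of $\delta$ interacts with $\gamma$; the subtlety is that $\delta$ itself need not be $f$-invariant, only its orbit, so one works with the multicurve $\bigcup_k f^k(\delta)$ and uses that it must be disjoint from or equal to the reduction system, then restricts attention to $\Sigma_l$ where pseudo-Anosov-ness kills any interior invariant multicurve, forcing all of $\delta$'s orbit into $\partial\Sigma_l \ni \gamma$, contradicting $i(\gamma,\delta)\neq 0$. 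I would expect to cite \cite{BLM} and \cite{McCarthy} (equivalently \Cref{lem:mccarthy}) for the precise structural inputs rather than reprove them.
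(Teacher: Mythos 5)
The paper does not actually prove this proposition; it is quoted from Castel and the ``proof'' is the citation \cite{Castel}. So you are reconstructing a known argument rather than diverging from the paper's route. Your overall strategy (both directions via the \cite{BLM} definition of $\sigma(f)$, plus the no-invariant-arc-or-curve-system property of pseudo-Anosov restrictions for the converse in case (1)) is the right shape, but there is a concrete error in your ``only if'' direction.

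In your complementary case (b) you propose to certify $\gamma\notin\sigma(f)$ by producing a reducing curve $\delta$ of $f|_{\Sigma_l}$ that crosses $\gamma$. This cannot work: a reducing curve for $f|_{\Sigma_l}$ is by definition an essential curve \emph{in the interior of} $\Sigma_l$, hence disjoint from $\partial\Sigma_l\supset\gamma$, so it never has nonzero geometric intersection with $\gamma$ and says nothing about whether $\gamma\in\sigma(f)$. Worse, the assertion you are trying to prove in that case is false as stated: take $f=\tau_\gamma\circ\tau_\delta$ with $\delta\subset\Sigma_l$ disjoint from $\gamma$; then $f|_{\Sigma_l}$ is reducible (neither periodic nor pseudo-Anosov), yet $\gamma\in\sigma(f)$ since the canonical reduction system of a multitwist is its set of twisting curves (Lemma \ref{lem:multitwists_commute} and \cite{BLM}). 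The resolution is that the proposition implicitly assumes $f|_{\Sigma_l}$ and $f|_{\Sigma_r}$ are irreducible (in Castel's formulation $\Sigma_l,\Sigma_r$ are Thurston pieces adjacent to $\gamma$, not merely the two sides of $\gamma$); your proof must either state and use that hypothesis or handle reducible restrictions by first passing to the canonical decomposition. A secondary, fixable imprecision: in your case (a), for a periodic $f$ the orbit $\{\delta,f(\delta),\dots\}$ of an arbitrary curve $\delta$ crossing $\gamma$ need not be admissible (the translates can intersect one another), so $\delta$ need not be a reducing curve; the correct route is that $f^m\simeq\mathrm{id}$ on all of $\Sigma$ forces $f$ to be periodic, and periodic mapping classes have empty canonical reduction system, which is itself a (quotient-orbifold) argument from \cite{BLM} rather than the isotopy you sketch.
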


\begin{proof} See \cite{Castel}. \end{proof}

Let $I(\cdot,\cdot)$ denote geometric intersection number (extended linearly to finite systems of curves in the usual way).

\begin{lem} \label{lem:braid_relations}
Let $\gamma$ and $\gamma'$ be two (non-isotopic) simple closed curves on $\Sigma$. Then
\begin{enumerate}
\item $\tau_{\gamma} \tau_{\gamma'} = \tau_{\gamma'} \tau_{\gamma}$ if $I(\gamma,\gamma') = 0$;
\item $\tau_\gamma \tau_{\gamma'} \tau_{\gamma} = \tau_{\gamma'} \tau_\gamma \tau_{\gamma'}$ if $I(\gamma,\gamma') = 1$;
\item $\langle \tau_{\gamma}, \tau_{\gamma'}\rangle$ is a free group if $I(\gamma,\gamma') \geq 2$.
\end{enumerate}
\end{lem}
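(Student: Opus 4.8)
This is a classical fact about Dehn twists, essentially due to the behaviour of twists under the curve complex / intersection pattern, so the plan is to recall the standard arguments rather than to prove anything new. For part (1): if $I(\gamma,\gamma')=0$ we may realise $\gamma$ and $\gamma'$ disjointly, so we can support representatives of $\tau_\gamma$ and $\tau_{\gamma'}$ in disjoint annular neighbourhoods; diffeomorphisms supported in disjoint regions commute, hence $\tau_\gamma \tau_{\gamma'} = \tau_{\gamma'}\tau_\gamma$. For part (2): if $I(\gamma,\gamma')=1$, a regular neighbourhood of $\gamma\cup\gamma'$ is a one-holed torus $\Sigma_1^1$, and inside its mapping class group the two twists satisfy the braid relation $\tau_\gamma\tau_{\gamma'}\tau_\gamma = \tau_{\gamma'}\tau_\gamma\tau_{\gamma'}$ (one way to see this: $\tau_\gamma\tau_{\gamma'}$ maps $\gamma\mapsto\gamma'$, and conjugating $\tau_\gamma$ by $\tau_\gamma\tau_{\gamma'}$ gives $\tau_{\gamma'}$; equivalently invoke the standard presentation of $\Gamma(\Sigma_1^1)$ or $SL_2(\bZ)$). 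This relation then holds in $\Gamma(\Sigma)$ since the inclusion of the subsurface induces a homomorphism on mapping class groups.

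For part (3): if $I(\gamma,\gamma')\geq 2$ the claim is that $\langle\tau_\gamma,\tau_{\gamma'}\rangle$ is free of rank two. The standard route is a ping-pong argument on a suitable space on which both twists act — for instance on the set of isotopy classes of simple closed curves (or on $\mathcal{PMF}$, the space of projective measured foliations), using that $\tau_\gamma$ and $\tau_{\gamma'}$ act as ``north–south''-type dynamics with attracting/repelling sets given by neighbourhoods of $\gamma$ and $\gamma'$ respectively, and that these neighbourhoods are disjoint once the curves fill enough of the surface. Concretely, one checks that for a curve $\delta$ with $I(\delta,\gamma)>0$ one has $I(\tau_\gamma^n(\delta),\gamma')$ growing without bound as $|n|\to\infty$ (and symmetrically), so that no reduced word in $\tau_\gamma^{\pm1},\tau_{\gamma'}^{\pm1}$ can act trivially. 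This is exactly the content of Ishida's theorem / the Hamidi-Tehrani–Ishida result on groups generated by two Dehn twists; I would simply cite that, as it is stated in the form given here in standard references (e.g. the Farb–Margalit primer already in the bibliography).

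The only genuinely delicate point is part (3): one must be careful that when $I(\gamma,\gamma')\geq 2$ the two curves are not isotopic (given) and that the twists have infinite order and no hidden relation, which is where the ping-pong estimate on intersection numbers does the work. Parts (1) and (2) are immediate from ``support in a subsurface'' arguments. Since all three statements appear verbatim in the literature, the proof reduces to the pointer \cite{Farb-Margalit} together with the remark that (1)–(2) are local to a neighbourhood of $\gamma\cup\gamma'$ while (3) follows from the dynamical action on curves; I would present it in that compressed form.
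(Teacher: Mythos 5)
Your proposal is correct and matches the paper, which simply cites \cite{Farb-Margalit} for all three statements; your additional sketches (disjoint supports for (1), the one-holed torus braid relation for (2), and the ping-pong/intersection-number-growth argument of Ishida and Hamidi-Tehrani for (3)) are the standard proofs found in that reference. Nothing further is needed.
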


\begin{proof} See \cite{Farb-Margalit}.
\end{proof}

\begin{rmk}\label{rmk:triangle}
Further constraints are imposed by the `triangle' relation \eqref{eqn:triangle_relations}. Let $\gamma, \gamma', \gamma''$ be three curves on $\Sigma$ which meet with pairwise geometric intersection number one.  One can find an essential subsurface $\Sigma \supset \Sigma_1^b \supset \{\gamma \cup \gamma' \cup \gamma''\}$ for $1 \leq b \leq 3$, i.e. one for which the boundary curves of $\Sigma_1^b$ are homotopic to boundary curves of $\Sigma$ or are not nullhomotopic in $\Sigma$; $b=1$ is the case in which $\gamma''$ is obtained by `surgery' on $\gamma \cup \gamma'$. Then
$\tau_{\gamma} \tau_{\gamma'} \tau_{\gamma''} \tau_{\gamma} = \tau_{\gamma'} \tau_{\gamma''} \tau_{\gamma} \tau_{\gamma'}
$
holds in $\Gamma(\Sigma_1^b)$ exactly when $b\in \{1,2\}$, cf. the proof of \cite[Lemma 5.10]{Salter}. 
\end{rmk}

A `multitwist'  is  a product of non-trivial powers of Dehn twists along the curves of an admissible configuration.  

\begin{lem} \label{lem:multitwists_commute}
If $T_a$ and $T_b$ are multitwists in admissible curve systems $a,b$ and $I(a,b) \neq 0$ then $[T_a,T_b] \neq 1$.
\end{lem}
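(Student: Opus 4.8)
The plan is to argue by contradiction, using canonical reduction systems and Lemma~\ref{lem:commute_preserve_sigma}. Write $T_a=\prod_i\tau_{a_i}^{n_i}$ and $T_b=\prod_j\tau_{b_j}^{m_j}$ with all exponents non-zero; discarding boundary-parallel curves (which contribute nothing to $I(a,b)$) we may assume every $a_i,b_j$ is essential and non-peripheral. Suppose $[T_a,T_b]=1$. By Lemma~\ref{lem:commute_preserve_sigma} the curves of $\sigma(T_a)$ and $\sigma(T_b)$ then have vanishing pairwise geometric intersection number. So it suffices to prove that $a\subseteq\sigma(T_a)$ and $b\subseteq\sigma(T_b)$: since $I(a,b)\neq0$ there exist $a_i\in a$ and $b_j\in b$ with $I(a_i,b_j)\neq0$, and these are then curves of $\sigma(T_a)$ and $\sigma(T_b)$ with non-zero geometric intersection, a contradiction.

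Thus the heart of the matter is the (standard, cf.\ \cite{BLM,Farb-Margalit}) fact that the canonical reduction system of a non-trivial multitwist $T=\prod_i\tau_{c_i}^{N_i}$ (all $N_i\neq0$, $\{c_i\}$ admissible, each $c_i$ essential and non-peripheral) contains its support $\{c_i\}$. This reduces to the assertion that every reducing curve for $T$ is disjoint from each $c_i$: given that, each $c_i$, being fixed by $T$, is itself a reducing curve, and no \emph{other} reducing curve meets it, so $c_i\in\sigma(T)$ by definition. To prove the assertion it is enough to show that $I(\gamma,c_1)>0$ implies $T^k(\gamma)\not\simeq\gamma$ for all $k\geq1$; in fact $I(T^k(\gamma),\gamma)\to\infty$. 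This is the step I would do with care, since a priori the mixed signs of the $N_i$ could produce cancellation. I would pass to the annular cover $\widehat\Sigma\to\Sigma$ corresponding to $\langle c_1\rangle\leq\pi_1(\Sigma)$, compactified to a closed annulus. For $i\geq2$ the curve $c_i$ is disjoint from $c_1$ and not isotopic to it, so its preimage in $\widehat\Sigma$ consists of non-compact lines, in strip neighbourhoods of which a Dehn twist is an isotopically trivial shear; hence $\tau_{c_i}$ lifts to the identity mapping class of $\widehat\Sigma$, and $T$ lifts to the $N_1$-th power of the core twist $\widehat\tau$. Consequently $T^k$ lifts to $\widehat\tau^{kN_1}$, which wraps the $I(\gamma,c_1)$ essential arcs of the lift of $\gamma$ around the core $kN_1$ times; the standard lower bound for geometric intersection number in terms of an annular cover then gives $I(T^k(\gamma),\gamma)\geq|kN_1|\,I(\gamma,c_1)^2$, which is positive for $k\geq1$ and tends to infinity. (If $\Sigma$ is too small to contain two disjoint non-isotopic essential curves, every multitwist here is a single twist and the lemma follows directly from Lemma~\ref{lem:braid_relations}.)

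Feeding this back gives $a\subseteq\sigma(T_a)$ and $b\subseteq\sigma(T_b)$, and the contradiction with Lemma~\ref{lem:commute_preserve_sigma} noted in the first paragraph finishes the proof. The only genuine obstacle is the intersection-growth estimate of the middle paragraph; its plausibility is reinforced by the fact that $T-\mathrm{Id}$ is already nilpotent on $H_1$ (disjoint curves have zero algebraic intersection), but the clean geometric statement is easiest to extract from the annular cover, in which the off-support twists disappear and only the honest twist about $c_1$ remains.
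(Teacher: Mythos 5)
Your proof is correct and takes essentially the same route as the paper: both arguments reduce to the fact that the canonical reduction system of a non-trivial multitwist contains (indeed equals) its supporting curve system, and then invoke Lemma~\ref{lem:commute_preserve_sigma} to contradict $I(a,b)\neq 0$. The only difference is that the paper simply cites \cite{BLM} for that fact, whereas you supply a proof of it via annular covers / intersection-number growth, which is a standard and valid way to establish it.
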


\begin{proof} The essential reduction system of a multitwist is the set of curves in which one is twisting \cite{BLM}. 
 If the maps commute, $T_a$  preserves $b$ and vice-versa, by Lemma \ref{lem:commute_preserve_sigma}. \end{proof}

A \emph{chain of curves} in $\Sigma$ is a sequence $\gamma_1,\ldots, \gamma_l$ for which the geometric intersection numbers satisfy
\[
I(\gamma_i, \gamma_{i+1}) = 1, \ 1\leq i\leq l-1, \quad I(\gamma_i,\gamma_j) = 0 \ \mathrm{if} \ |i-j|>1.
\]
Note that the $\gamma_i$ are necessarily non-separating; a chain of length $l$ exists on a surface of genus $g$ only when $g \geq l/2-1$.  Lemma \ref{lem:braid_relations} shows that a chain of length $l$ on $\Sigma$ defines a representation $Br_{l+1}\to \Gamma(\Sigma)$.  

\begin{thm}[Castel]\label{thm:castel}
Let $n\geq 6$ and $\rho: Br_n \to \Gamma(\Sigma_g^b)$ be a homomorphism, with $g\leq n/2$.  Then $\rho$ has cyclic image, or there is a chain $\gamma_1,\ldots,\gamma_{n-1}$ of simple closed curves, a fixed sign $\varepsilon \in \{-1,+1\}$, and an element $w \in \cap_j \, Z(\tau_{\gamma_j})$, such that $\rho(t_i) = \tau_{\gamma_i}^{\varepsilon}\cdot w$ for each $i$. 
\end{thm}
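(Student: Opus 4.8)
\textit{How I would approach the proof.} The plan is to run Thurston's trichotomy on the images of the standard generators $t_i$ of $Br_n$, exploiting the fact that for $n$ large these generators satisfy a great many commuting and braiding relations. Write $a_i := \rho(t_i)$ for $1\le i\le n-1$. Since the standard generators of $Br_n$ are all conjugate, the $a_i$ are mutually conjugate in $\Gamma(\Sigma_g^b)$; hence they share a common Nielsen--Thurston type, and conjugation carries the canonical essential reduction systems $\sigma(a_i)$ to one another. I would also record that for $n\ge 6$ the odd-indexed generators span $\langle t_1,t_3,t_5\rangle\cong\bZ^3\le Br_n$.

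First I would dispose of the two irreducible cases, each of which I expect to force cyclic image. If $a_1$ is pseudo-Anosov, then $a_3,a_5\in Z(a_1)$, which is virtually cyclic by Lemma~\ref{lem:mccarthy}; so the abelian group $\langle a_1,a_3,a_5\rangle$ has rank $\le 1$, and since the $a_i$ are conjugate elements of infinite order, $a_1$ and $a_3$ share a nontrivial common power. Bringing in $a_2$: the braid relation makes $z:=a_1a_2a_1$ conjugate $a_1$ to $a_2$ and back, so $z^2\in Z(a_1)\cap Z(a_2)$, and combining this with virtual cyclicity one finds $\langle a_1,a_2\rangle$ is virtually cyclic, hence --- being a quotient of $Br_3$ in which $a_1$ is conjugate to $a_2$ and has infinite order --- is infinite cyclic with $a_1=a_2$; iterating along the chain gives $a_1=\cdots=a_{n-1}$. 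If instead $a_1$ is periodic of order $d>1$, then $\rho$ factors through $Br_n/\langle\langle t_1^d\rangle\rangle$; here I would use the structure of these Coxeter-type quotients ($S_n$ when $d=2$) together with the Riemann--Hurwitz bound on finite subgroups of $\Gamma(\Sigma_g^b)$ --- which for $g\le n/2$ is far too small to receive $S_n$ --- to again force the image to be cyclic (this last sub-case needs its own argument and I would expect some care for large $d$).

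The substantive case is when every $a_i$ is reducible, $\sigma(a_i)\neq\emptyset$. For $|i-j|\ge 2$, Lemma~\ref{lem:commute_preserve_sigma} gives $a_i(\sigma(a_j))=\sigma(a_j)$ and $I(\sigma(a_i),\sigma(a_j))=0$; for $|i-j|=1$ the braid relation exhibits $a_{i+1}=(a_ia_{i+1})\,a_i\,(a_ia_{i+1})^{-1}$, so $\sigma(a_{i+1})=(a_ia_{i+1})\cdot\sigma(a_i)$. Propagating these around the chain $t_1,\dots,t_{n-1}$ and analysing each reducing curve with Proposition~\ref{prop:characterise_reduction_curves} (the key technical input, also due to Castel), I would argue that either some $a_i$ is ``supported'' on a subsurface too large to embed in $\Sigma_g^b$ once $g\le n/2$ --- the possibility that gets excluded --- or else the reduction data of all the $a_i$ lives on one common one-holed essential subsurface, on which each $a_i$ acts as $\tau_{\gamma_i}^{\varepsilon}$ for a single sign $\varepsilon$ (higher powers or genuine multitwists being incompatible with the braid relations, via Lemmas~\ref{lem:braid_relations} and~\ref{lem:multitwists_commute}). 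The curves $\gamma_i$ then form a chain, since $\tau_{\gamma_i}$ and $\tau_{\gamma_j}$ must braid (resp.\ commute) exactly as the $a_i$ do (Lemma~\ref{lem:braid_relations}). Finally I would check that $w:=a_i\tau_{\gamma_i}^{-\varepsilon}$ is independent of $i$ and centralizes every $\tau_{\gamma_j}$, which yields $\rho(t_i)=\tau_{\gamma_i}^{\varepsilon}w$.

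The main obstacle is precisely this reducible case: one must simultaneously pin down the \emph{combinatorics} of the system $\{\sigma(a_i)\}$ --- ruling out ``cabled'' or tree-like configurations in favour of a single chain --- and control the \emph{dynamics} of each $a_i$ on every piece of its reduction, excluding pseudo-Anosov-on-a-subsurface and higher multitwist behaviour. Both reduce to delicate bookkeeping with geometric intersection numbers and Euler characteristics, and it is here that the hypotheses $n\ge 6$ (enough relations to rigidify the configuration) and $g\le n/2$ (too little topological room on $\Sigma_g^b$ to support an exotic one, such as a cabled representation) enter in an essential way.
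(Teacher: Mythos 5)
You should first be aware that the paper does not prove this statement at all: it is Castel's theorem, quoted verbatim from \cite{Castel}, whose proof occupies a long memoir. So the comparison here is between your sketch and Castel's actual argument, and while your roadmap (Nielsen--Thurston trichotomy on the conjugate images $a_i=\rho(t_i)$, virtually cyclic centralizers to kill the pseudo-Anosov case, canonical reduction systems interacting with the commutation and braid relations in the reducible case) is indeed the correct general strategy, it has genuine gaps at exactly the points where the theorem is hard.

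The first concrete gap is the periodic case. Your reduction to $Br_n/\langle\langle t_1^d\rangle\rangle$ plus a Riemann--Hurwitz bound on finite subgroups of $\Gamma(\Sigma_g^b)$ only works for $d=2$, where the quotient is $S_n$. For $d\geq 3$ and $n\geq 6$ these quotients are \emph{infinite} (the finiteness criterion $1/d+1/n>1/2$ fails), so bounding finite subgroups says nothing; one instead needs the structure theory of centralizers of periodic mapping classes (Nielsen realization, passage to the quotient orbifold), which is a substantive chapter of Castel's proof rather than a detail. The second and larger gap is the reducible case, which you describe only as ``delicate bookkeeping'': ruling out the scenario where each $a_i$ is pseudo-Anosov on a proper subsurface (with the subsurfaces arranged compatibly with the relations), ruling out cabled or multitwist behaviour, showing the reduction data assembles into a single chain with a \emph{common} sign $\varepsilon$ and a \emph{common} centralizing element $w$ --- this is where the hypotheses $n\geq 6$ and $g\leq n/2$ enter via careful genus and Euler characteristic counts, and it is the bulk of the memoir. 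As written, your proposal states the desired conclusions of this analysis rather than deriving them, so it is a plan for a proof rather than a proof; for the purposes of this paper the statement should simply be treated as an imported black box.
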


The second case of Theorem \ref{thm:castel} can only arise if a chain exists, so when $g\geq n/2-1$.

The hypotheses in Theorem \ref{thm:castel} are remarkably weak ($\rho$ is not assumed to be injective, $b$ is not constrained). The common centralizer $\cap_j Z(\tau_{\gamma_j})$ is generated by maps supported on the complement of the chain, and by the hyperelliptic involution in the subsurface neighbourhood of the chain. 

Castel speculates  that the result should hold whenever $g \leq n-2$.  In this paper, we will encounter the case $n=8, g=5$ which falls outside the known results but within the expected ones; however, we will know something about the composite homomorphism $Br_8 \to \Gamma_5 \to Sp(10;\bZ)$, which gives us leverage not available in the general setting.

\section{Constraints on essential reduction systems}

Suppose for contradiction that  \eqref{eqn:monodromy} lifts, i.e. that there is a factorisation of the homomorphism 
\[
G(\Gamma)\to \pi_1(\scrM_{3,3}) \to  \Gamma(\Sigma) \to Sp(10;\bZ)
\]
where $\Sigma = \sqcup_i \Sigma_{g_i}$ with $g_i > 0$ for each $i$ and $g_1 + \cdots + g_r = 5$, and where the composite recovers the usual cohomological monodromy of the universal family.  For most of the paper we focus on the case where  $r=1$ and the monodromy factors through $\Gamma_5$; the other (easier) cases are considered in Corollary \ref{cor:disconnected}.    We write $\rho: G(\Gamma) \to \Gamma_5$ for the resulting homomorphism to the mapping class group, and will write $\rho(\sigma_v) = f_v \in \Gamma_5$, for $v\in \mathrm{Vert}(\Gamma)$.

\begin{lem} Each $f_v$ is a reducible mapping class. \end{lem}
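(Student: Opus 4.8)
The plan is to rule out the three non-reducible possibilities from Thurston's trichotomy (periodic and pseudo-Anosov) for each $f_v$, using the algebraic structure of $G(\Gamma)$ that has been assembled in Section 2 together with the known properties of the homological monodromy.

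First I would dispose of the pseudo-Anosov case. By Lemma~\ref{lem:Br_8}, for each $v$ there is a homomorphism $Br_6 \to Z(\sigma_v)$ sending standard generators of the braid group to conjugates of standard generators of $G(\Gamma)$, and the image of this $Br_6$ lies in the centraliser of $\sigma_v$. Pushing forward under $\rho$, the centraliser $Z(f_v)$ in $\Gamma_5$ contains the image of a copy of $Br_6$. If $f_v$ were pseudo-Anosov, then by Lemma~\ref{lem:mccarthy} its centraliser would be virtually cyclic, so the image of $Br_6$ in $Z(f_v)$ would be virtually cyclic, hence (being a quotient of $Br_6$) would have to be cyclic — more precisely the induced map $Br_6 \to Z(f_v)$ would have cyclic (abelian) image. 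But all generators $\sigma_v$ are conjugate in $G(\Gamma)$ (Lemma~\ref{lem:conjugate}), so this would force \emph{every} $f_w$ to have image generated by a single element, meaning $\rho$ has cyclic (hence abelian) image; composing with $\Gamma_5 \to Sp(10;\bZ)$ would make the homological monodromy $G(\Gamma) \to Sp(10;\bZ)$ abelian, contradicting irreducibility (Lemma~\ref{lem:irred}), since an irreducible symplectic representation of rank $10$ is certainly non-abelian. Hence no $f_v$ is pseudo-Anosov.

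Next I would rule out the periodic case. The key input is that $\rho(\sigma_v) = f_v$ maps to a transvection in $Sp(10;\bZ)$ under the homological monodromy (Lemma~\ref{lem:transvection}). A periodic mapping class acts on $H_1(\Sigma_5;\bZ)$ with finite order, whereas a transvection has infinite order in $Sp(10;\bZ)$ (property (1): it is not conjugate to any nontrivial power of itself, and in particular has infinite order). So if $f_v$ were periodic, its image in $Sp(10;\bZ)$ would be finite order, contradicting that it equals a transvection. This immediately excludes case (1) of the trichotomy for each $f_v$.

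With pseudo-Anosov and periodic both excluded, Thurston's classification forces each $f_v$ to be reducible, which is the assertion. The main obstacle is the pseudo-Anosov step: one must be careful that ``$Z(f_v)$ virtually cyclic'' genuinely forces the relevant $Br_6$-image to be too small to be compatible with all the $f_w$ being conjugate and the total monodromy being irreducible — i.e., one needs the interplay of Lemmas~\ref{lem:conjugate}, \ref{lem:irred}, \ref{lem:mccarthy} rather than a purely local argument about a single $v$. (An alternative and perhaps cleaner route for the pseudo-Anosov exclusion: by Lemma~\ref{lem:affine_braid} there is a copy of an affine braid group $Br_9^{aff}$ in $G(\Gamma)$, hence a chain of commuting relations forcing a large abelian or free subgroup structure in $Z(f_v)$ that is incompatible with virtual cyclicity; but the conjugacy-plus-irreducibility argument above is the most economical.) The periodic step, by contrast, is immediate from the transvection property and requires no finer mapping class group theory.
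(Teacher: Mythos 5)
Your overall strategy is the same as the paper's: run Thurston's trichotomy, kill the pseudo-Anosov case with the $Br_6$ in the centraliser (Lemma~\ref{lem:Br_8}) plus McCarthy's theorem (Lemma~\ref{lem:mccarthy}), and kill the periodic case with the transvection property (Lemma~\ref{lem:transvection}). Your periodic step is correct and in fact slightly cleaner than the paper's (finite order on $H_1$ versus infinite order of a transvection, rather than an appeal to roots-of-unity eigenvalues), and it automatically covers the identity case.

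The pseudo-Anosov step, however, contains a genuine gap. First, ``virtually cyclic image of $Br_6$'' does \emph{not} imply ``cyclic image'': every finite group is virtually cyclic, and $Br_6$ surjects onto the non-abelian finite group $\mathrm{Sym}_6$, so a quotient of $Br_6$ landing in a virtually cyclic group need not be cyclic or even abelian. Second, even granting that this one copy of $Br_6$ had cyclic image, it does not follow that all of $\rho(G(\Gamma))$ is cyclic (a subgroup generated by five of the sixteen generators being cyclic says nothing directly about the whole image), so the appeal to irreducibility (Lemma~\ref{lem:irred}) of the full representation is a non sequitur as written. The repair is short and is what the paper intends by ``contradicts the homological monodromy'': if $Z(f_v)$ is virtually cyclic, then the image of $Br_6$ in $Sp(10;\bZ)$, being a quotient of a virtually cyclic group, is itself virtually cyclic; but that image contains two transvections satisfying the braid relation, i.e.\ transvections in classes with intersection number $\pm 1$, and these generate a non--virtually-cyclic group (on the rank-two symplectic sublattice they span they generate a finite-index subgroup of $SL(2;\bZ)$, which contains a free group of rank two). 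Equivalently: in a virtually cyclic group any two elements have commuting powers, which is incompatible with two braiding, non-commuting transvections. Either formulation closes the gap without touching the rest of your argument.
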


\begin{proof} Since all the $\sigma_v$ are conjugate, they all have the same Thurston type. Lemma \ref{lem:Br_8} and Lemma \ref{lem:mccarthy}   together rule out $f_v$ being pseudo-Anosov. A periodic non-identity map of a closed surface necessarily has eigenvalues which are non-trivial roots of unity, which would contradict Lemma \ref{lem:transvection}.
\end{proof}

By the structure theorem, $f_v$ has an essential reduction system $\sigma(f_v)$, a finite union of pairwise disjoint and pairwise non-isotopic simple closed curves, the union of which is invariant under $f_v$. Our aim is to prove that the essential reduction system of $f_v$ comprises a single reduction curve.

\subsection{The case of a single essential reduction curve} 

\begin{lem} \label{lem:most_of_the_point}
Suppose that $f_v$ has a unique essential reduction curve $\gamma$. Then $\gamma$ is non-separating, and $f_v$ is a positive Dehn twist in $\gamma$. 
\end{lem}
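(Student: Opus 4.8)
The plan is to analyze the structure of $f_v$ given that $\sigma(f_v) = \{\gamma\}$ is a single curve, using the characterization of essential reduction curves in Proposition \ref{prop:characterise_reduction_curves} together with the constraints coming from Lemma \ref{lem:transvection} (that $f_v$ is a transvection on $H^3$, hence unipotent with a rank-one "direction") and Lemma \ref{lem:Br_8}/Lemma \ref{lem:mccarthy} (that $f_v$ cannot be pseudo-Anosov on any piece and, more importantly, sits inside a large braid group image).

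\medskip

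First I would cut $\Sigma_5$ along $\gamma$. Since $\gamma$ is the \emph{only} essential reduction curve, on each complementary piece $\Sigma_l, \Sigma_r$ the restriction of a suitable power $f_v^N$ (chosen so that $f_v$ fixes $\gamma$ with orientation, which one can arrange after replacing by a power without changing the conclusion, or argue $f_v$ already does since a transvection and its conjugates generate the image and $\gamma$ must be invariant) is either pseudo-Anosov or periodic; pseudo-Anosov is excluded on a piece by the reducibility already established plus the fact that a genuinely pseudo-Anosov restriction would force $\gamma$ to be non-separating but would also make $f_v$ have infinite-order "stretch" incompatible with being unipotent on homology — more cleanly, a pA piece contributes eigenvalues off the unit circle to the action on $H_1$ of that subsurface, contradicting that $f_v$ maps to a unipotent (transvection) in $Sp(10;\bZ)$ via the Picard--Lefschetz description. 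So $f_v$ is periodic on $\Sigma_l$ and on $\Sigma_r$. By case (3) of Proposition \ref{prop:characterise_reduction_curves} being incompatible with a single reduction curve only in certain configurations, and by case (2), the periods must match and $f_v^m|_{\Sigma_l \cup \Sigma_r}$ is a nonzero power of $\tau_\gamma$. Now the key point: a periodic map of a subsurface with nontrivial rotation number acts on the homology of that subsurface with a genuine root of unity as an eigenvalue (the rotation is visible on $H_1$ unless the subsurface is planar, i.e. genus zero), so for $f_v$ to be unipotent on all of $H^3(X;\bZ)$ we need both $\Sigma_l$ and $\Sigma_r$ to be planar — hence $\gamma$ is non-separating, $\Sigma_l = \Sigma_r$ is the one-holed genus-$4$ surface... wait, more carefully: if $\gamma$ separated, the two sides would each carry positive genus (a Dehn twist power on a boundary-parallel curve of a closed-up genus-zero piece is trivial), forcing periodic-with-rotation on a positive genus piece, a contradiction; so $\gamma$ is non-separating and $f_v$, having trivial periodic part on the (connected) complement $\Sigma_{4}^2$, is exactly a power $\tau_\gamma^k$.

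\medskip

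Finally I would pin down $k = +1$. That $k \neq 0$ is forced by $f_v \neq \mathrm{Id}$ (its image is a nontrivial transvection). That $\gamma$ is non-separating also follows a posteriori from the first sentence of the definition of a chain: $f_v$ lies in a $Br_8$-chain image by Lemma \ref{lem:Br_8}, whose generators are conjugate, and a chain of curves consists of non-separating curves — but that already presupposes $f_v$ is a Dehn twist, so I would instead get non-separating-ness from the transvection condition as above (a separating curve's Dehn twist acts trivially on $H_1$, so could not be a transvection). For $|k| = 1$: the transvection $f_v$ satisfies $\dim \mathrm{im}(I - f_v) = 1$ over $\bQ$; for $\tau_\gamma^k$ acting on $H^3$ by Picard--Lefschetz, $I - \tau_\gamma^k$ has image spanned by (a multiple of) $[\gamma]$, but the \emph{integral} statement that $\tau_\gamma^k$ is conjugate in $Sp(10;\bZ)$ to $\left(\begin{smallmatrix}1&1\\0&1\end{smallmatrix}\right)\oplus \mathrm{Id}_8$ — i.e. the elementary divisor is exactly $1$, not $k$ — forces $k = \pm 1$, and then conjugating $\tau_\gamma^{-1}$ to $\tau_{\gamma}$ via an orientation-reversing... no: rather, observe item (1) after the definition of transvection says a transvection is not conjugate to any nontrivial power of itself other than... actually a transvection $\left(\begin{smallmatrix}1&1\\0&1\end{smallmatrix}\right)$ is conjugate to $\left(\begin{smallmatrix}1&-1\\0&1\end{smallmatrix}\right)$ in $Sp$, so I must get the sign from elsewhere: it comes from Picard--Lefschetz for the \emph{standard generators}, which are monodromies around the discriminant acquiring a node and hence act by the positive Picard--Lefschetz transformation, matching a \emph{positive} Dehn twist $\tau_\gamma^{+1}$; since all $\sigma_v$ are conjugate (Lemma \ref{lem:conjugate}) and conjugation preserves being a positive vs.\ negative twist power when $k = \pm1$ is already known, we conclude $f_v = \tau_\gamma$.

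\medskip

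The main obstacle I expect is the step ruling out a periodic non-planar piece: showing rigorously that a nontrivial-rotation periodic restriction to a positive-genus subsurface forces a non-unipotent contribution to the global symplectic monodromy, which requires comparing the local mapping-class-group picture (action on $H_1$ of the cut surface, with the kernel of $H_1(\Sigma \setminus \gamma) \to H_1(\Sigma)$ understood) with the hypothesis that the image in $Sp(10;\bZ)$ is a transvection. Getting the exact power to be $+1$ rather than $-1$ is the other delicate point, and it hinges on tracking the sign conventions in Picard--Lefschetz versus the Dehn twist, which I would handle by invoking that the standard generators $\sigma_v$ are honest meridian loops around the discriminant.
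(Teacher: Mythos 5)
The central step of your argument fails: you rule out a pseudo-Anosov restriction of $f_v$ to a complementary piece of $\gamma$ by asserting that a pA piece ``contributes eigenvalues off the unit circle to the action on $H_1$''. This is false --- the stretch factor of a pseudo-Anosov map is not in general visible on homology, and there exist pseudo-Anosov mapping classes acting trivially on $H_1$ (Torelli pseudo-Anosovs); the paper itself explicitly confronts this possibility later (``The action of $f_{v_1}$ on $\Sigma_4^1$ would have to be a Torelli pseudo-Anosov map\ldots''). Consequently the hypothesis that $f_v$ maps to a transvection does not by itself exclude a pA component, and your reduction to ``periodic on both sides'' is unjustified. The paper's proof handles exactly this case by a completely different mechanism: it feeds the copy of $Br_6$ in the centraliser of $\sigma_v$ (Lemma \ref{lem:Br_8}) into Lemma \ref{lem:mccarthy} (the centraliser of a pseudo-Anosov is virtually cyclic), concluding that $Br_6$ would act virtually cyclically on the pA piece; it then splits into the sub-cases ``pA on both sides'' and ``pA on one side, trivial on the other'', using conjugacy of the $f_{v_j}$ and uniqueness of their reduction curves to show that in either sub-case the composite $Br_6 \to Sp(10;\bZ)$ would have virtually cyclic image, contradicting the known homological monodromy. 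None of this appears in your proposal, so the separating case is not actually closed. (The same gap recurs in your treatment of the non-separating case, where the paper again invokes the centralising $Br_6$ to exclude a pA restriction to $\Sigma\setminus\gamma$.)

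Two smaller points. First, your parenthetical worry that $\left(\begin{smallmatrix}1&1\\0&1\end{smallmatrix}\right)$ is conjugate to $\left(\begin{smallmatrix}1&-1\\0&1\end{smallmatrix}\right)$ in $Sp$ is mistaken: writing a transvection as $x\mapsto x+c\langle x,v\rangle v$, conjugation by $g$ replaces $v$ by $gv$ and rescaling $v$ by $\lambda$ replaces $c$ by $\lambda^2 c$, so the sign of $c$ is a conjugacy invariant --- this is exactly item (1) in the paper's list of transvection properties, and it pins down $k=+1$ directly from Lemma \ref{lem:transvection} without any further appeal to orientation conventions. Second, your periodic-case analysis is broadly in line with the paper's (nontrivial periodic maps of positive-genus pieces produce nontrivial roots of unity on homology), and your observation that a separating essential curve on a closed genus-$5$ surface has positive genus on both sides is correct; but these correct portions do not rescue the argument, because the pseudo-Anosov alternative was never legitimately eliminated.
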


\begin{proof}
Suppose for contradiction that the unique essential reduction curve separates.  Since $f_v$ acts preserving $\sigma(f_v) = \{\gamma\}$, and $\gamma$ separates $\Sigma$ into subsurfaces of different genera, $f_v$ must preserve the two subsurfaces and preserve the orientation of $\gamma$.  From Proposition \ref{prop:characterise_reduction_curves}, one sees that one of the following (not necessarily mutually exclusive) three cases must occur:
\begin{enumerate}
\item $f_v = \tau_{\gamma}^k$ is a power of a Dehn twist on a separating curve;
\item $f_v$ acts as a periodic map with non-zero period $m$ on a subsurface of positive genus;
\item $f_v$ acts by a pseudo-Anosov map on a subsurface of positive genus.
\end{enumerate} 
Both the first two cases are incompatible with the action of $f_v$ on homology; in the first case it would act trivially, in the second case it would act by a matrix whose eigenvalues included some non-trivial $m$-th root of unity. 

So we have a subsurface $\Sigma_\psi \subset \Sigma$ on which $f_v$ acts by a pseudo-Anosov map $\psi$. Moreover, the complementary subsurface $\Sigma_{\neq\psi} \subset \Sigma$ contains no essential reduction curve, so the induced map on this subsurface is irreducible and hence either periodic or pseudo-Anosov. The former case is again ruled out for homological (eigenvalues of action on $H_1(\Sigma;\bZ)$) reasons unless the periodic map is trivial.  We conclude that if there is a unique reduction curve which furthermore separates, then $f_v$ is either pseudo-Anosov on both subsurfaces, or pseudo-Anosov on one and trivial on the other.

Now let $g\in Br_6 \to Z(f_v)$ be an element of the braid group mapping to the centraliser of $f_v$ as in Lemma \ref{lem:Br_8}. Then $g(\sigma(f_v)) = \sigma(f_v)$ so $g$ preserves $\gamma$, and hence the corresponding subsurfaces.  In particular, $g$ induces a mapping class on $\Sigma_{\psi}$ which commutes with $\psi$ on $\Sigma_\psi$.  By Lemma \ref{lem:mccarthy}, we obtain a virtually cyclic representation $\rho: Br_6 \to \Gamma(\Sigma_{\psi})$ for which each non-trivial element of the centraliser of $\psi$ is either pseudo-Anosov or periodic. In both cases, the curve $\gamma$ belongs to the essential reduction system of the centralising element.

If $f_v$ is pseudo-Anosov on both $\Sigma_{\psi}$ and $\Sigma_{\neq\psi}$ then we find that $Br_6 \to \Gamma(\Sigma)$ preserves $\gamma$ and acts virtually cyclically on both subsurfaces. This contradicts the homological monodromy.  Therefore, $f_{v_1}$ acts by $\psi$ on one subsurface (say of genus $a>0$) and trivially on the other (of genus $5-a$).

Since $f_{v_j}$ is conjugate to $f_{v_1}$, it acts with a unique essential reduction curve, which separates $\Sigma$ into a subsurface $\Sigma_a^1$ on which $f_{v_j}$ acts by a pseudo-Anosov, and a subsurface $\Sigma_{5-a}^1$ on which $f_{v_j}$ acts trivially.  Since $\gamma \subset \sigma(f_{v_j})$, we must have equality, and since $a\neq 5-a$, $f_{v_j}$ acts trivially on the subsurface on which $f_{v_1}$ acted trivially.  Therefore, the representation $Br_6 \to Sp(10;\bZ)$ has virtually cyclic image, a contradiction.

Finally, suppose $\gamma$ is indeed non-separating.  By Proposition \ref{prop:characterise_reduction_curves}, the map $f_v$ on $\Sigma \backslash \gamma$ is either pseudo-Anosov, or has period $>1$, or $f_v = \tau_{\gamma}^{k}$ is a power of the Dehn twist along $\gamma$.  The first two cases lead to contradictions as before, either by considering the induced cyclic action of a centralising $Br_6$, or eigenvalues of the map on homology.  This reduces us to $f_v$ acting by the $k$-th power of a non-separating Dehn twist, and then the homological action forces  $k=1$.   
\end{proof}

\subsection{Separating essential reduction curves are fixed}

\begin{lem} \label{lem:separating_fixed} Suppose that $\sigma(f_v)$ contains a separating reduction curve $\gamma^{sep}$. Then this curve, and (either choice of) its orientation, are preserved by $f_v$.
\end{lem}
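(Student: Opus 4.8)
The plan is to assume $f_v$ does not fix $\gamma^{sep}$ and to contradict Lemma~\ref{lem:transvection}, which says $f_v$ acts on $H_1(\Sigma_5;\bZ)$ as a transvection, and so in particular with $\rk(f_{v*}-\mathrm{id})=1$. By Lemma~\ref{lem:commute_preserve_sigma} applied with $f=g=f_v$, the system $\sigma(f_v)$ is $f_v$-invariant, so $f_v$ permutes its (pairwise disjoint, pairwise non-isotopic, essential) curves; let $O=\{\gamma^{sep}=\gamma_0,\dots,\gamma_{m-1}\}$ be the $f_v$-orbit of $\gamma^{sep}$, a set of $m$ disjoint curves, each separating because a homeomorphism carries separating curves to separating curves. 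Suppose $m\ge 2$.

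First I would understand $\Sigma_5$ cut along $O$. Since every $\gamma_i$ is separating, equivalently nullhomologous, the dual graph $T$ of the cut surface is a tree with $m+1$ vertices and $m$ edges, and the (finite-order) automorphism of $T$ induced by $f_v$ has the edge set as a single orbit of size $m$. A finite-order tree automorphism either inverts an edge — impossible here, as such an edge would be a fixed, hence singleton, edge-orbit — or fixes a vertex $c$; the edges incident to $c$ then form a nonempty $f_v$-invariant subset of the single-orbit edge set, hence are all the edges, so $T$ is a star. Thus $\Sigma_5\setminus O$ consists of a central piece $\Sigma_0$ with $m$ boundary circles (fixed by $f_v$) and $m$ outer pieces $P_1,\dots,P_m$, each $P_i\cong\Sigma_g^1$ with $\partial P_i=\gamma_i$, cyclically permuted by $f_v$; since the $\gamma_i$ are essential, $g\ge 1$, and $g$ is independent of $i$.

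The contradiction is then homological. The inclusion $P_i\hookrightarrow\Sigma_5$ induces an injection onto a $2g$-dimensional subspace $W_i\subset H_1(\Sigma_5;\bQ)$ (a standard symplectic basis of $H_1(P_i)$ has the same intersection numbers computed in $\Sigma_5$, so stays nondegenerate and independent), and the $W_i$ are mutually orthogonal for the intersection form since curves in distinct $P_i$ are disjoint; hence $V:=\bigoplus_i W_i$ is an internal direct sum of dimension $2mg$, invariant under $f_{v*}$, which cyclically permutes the summands $W_i$ by isomorphisms. A triangular back-substitution shows that $V\xrightarrow{f_{v*}-\mathrm{id}}V$ followed by the projection onto $W_2\oplus\cdots\oplus W_m$ is onto, whence
\[
\rk(f_{v*}-\mathrm{id})\ \ge\ \rk\bigl((f_{v*}-\mathrm{id})|_V\bigr)\ \ge\ 2g(m-1)\ \ge\ 2,
\]
contradicting $\rk(f_{v*}-\mathrm{id})=1$. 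Therefore $m=1$ and $f_v(\gamma^{sep})\simeq\gamma^{sep}$.

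For the orientation statement: $f_v$ preserves the orientation of $\Sigma_5$, so it preserves the orientation of $\gamma^{sep}$ exactly when it preserves, rather than exchanges, the two sides $\Sigma_a^1$ and $\Sigma_{5-a}^1$ of $\gamma^{sep}$ (with $1\le a\le 4$, both sides being essential); exchanging them would force $a=5-a$, impossible since $5$ is odd, so either orientation of $\gamma^{sep}$ is fixed. I expect the main obstacle to be the combinatorial step that pins down the star structure — in particular checking that the cut-open dual graph really is a tree, and that the $f_v$-moved pieces carry positive genus — since it is exactly this that converts a nontrivial permutation of the reduction curves into the bound $\rk(f_{v*}-\mathrm{id})\ge 2$; the rank computation and the parity argument for orientations are then routine.
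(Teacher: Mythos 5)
Your proof is correct and follows essentially the same route as the paper's: both derive the contradiction from the fact that a non-trivial permutation of the orbit of $\gamma^{sep}$ would permute disjoint positive-genus subsurfaces, hence permute non-trivially a family of orthogonal non-trivial symplectic subspaces of $H_1(\Sigma;\bQ)$, which is incompatible with $f_{v*}$ being a transvection, and both settle the orientation claim by noting that the two sides of $\gamma^{sep}$ have different genera since $5$ is odd. Your star-tree analysis of the cut-open dual graph and the explicit bound $\rk(f_{v*}-\mathrm{id})\geq 2g(m-1)$ are just a more elaborate packaging of what the paper obtains by observing that the smaller-genus sides of two swapped curves have disjoint interiors and that the fixed hyperplane of a transvection meets every positive-dimensional symplectic subspace non-trivially.
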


\begin{proof}
Consider the collection of separating essential reduction curves $\gamma^{sep} = \gamma_1^{sep}, \gamma_2^{sep}, \ldots, \gamma_r^{sep}$ for $f_v$.  Since $f_v$ preserves $\sigma(f_v)$ it induces a permutation of these $r$ curves.  Each of the separating curves splits $\Sigma$ into two components, either of genus one and four, or of genus two and three; the permutation of the $\gamma_j^{sep}$ induced by $f_v$ must preserve the splitting type.  If (relabelling indices if necessary) $f_v(\gamma_1^{sep}) = \gamma_2^{sep}$, then the smaller genus subsurface bound by the first curve must be taken diffeomorphically to the smaller genus subsurface bound by the second. Since $\gamma_1^{sep} \cap \gamma_2^{sep} = \emptyset$, these subsurfaces have disjoint interiors. 

 The symplectic vector space $H_1(\Sigma;\bQ)$ is split into orthogonal non-trivial symplectic subspaces indexed by the components of  $\Sigma \backslash \cup_j \gamma_j^{sep}$, so $f_v$ induces a non-trivial permutation of the corresponding block decomposition of $H_1(\Sigma;\bQ)$. This is not compatible with the fact that $f_v$ acts by a transvection, so has fixed locus a hyperplane for the action on $H_1(\Sigma;\bQ)$ (a hyperplane  meets every symplectic subspace non-trivially). Therefore, $f_v$ preserves each $\gamma_j^{sep}$ setwise.
 
 Since $f_v$ preserves $\gamma^{sep}$, it must preserve the two complementary subsurfaces (which have different genera), so it preserves the orientation of $\gamma^{sep}$. 
\end{proof}

 Let $\bar\rho: Br_6 \to Z(f_v)$ be the homomorphism from Lemma \ref{lem:Br_8}  to the centraliser of $f_v$.  An element  $\eta \in Br_6$ defines a mapping class $\bar\rho(\eta):= f_{\eta}$ which stabilises $\sigma(f_v)$.  The $f_v$-orbit of $\gamma_1^{sep}$ inside $\sigma(f_v)$ comprises at most 5 curves in the genus 1+4 splitting case and at most two in the genus 2+3 splitting case.  When $k<n$, any homomorphism from $Br_n$ to $\mathrm{Sym}_k$ has cyclic image \cite{Artin}, so all the elements $f_{\eta}$ induce the same permutation of the separating essential reduction curves for $f_v$ and of their complementary subsurfaces.  As usual, this general statement can be strengthened by using our  knowledge of the underlying symplectic representation. 
 
\begin{lem} \label{lem:unmoved}
The subgroup $\bar\rho(Br_6)$ induces the trivial permutation of $\{\gamma_j^{sep}\}$.
\end{lem}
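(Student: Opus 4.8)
The plan is to strengthen the permutation statement established just above — that $\bar\rho(Br_6)$ acts on the set $\{\gamma_j^{sep}\}$ of separating essential reduction curves of $f_v$ through a cyclic quotient — by showing the permutation is trivial. Since the generators $\bar\rho(t_i)$ of $\bar\rho(Br_6)$ are conjugates of standard generators of $G(\Gamma)$ by Lemma \ref{lem:Br_8}, it is enough to run the argument for a single one; so write $f := \bar\rho(t_1)$, which permutes $\{\gamma_j^{sep}\}$ because it commutes with $f_v$ (Lemma \ref{lem:commute_preserve_sigma}), and it remains to show that $f$ fixes every $\gamma_j^{sep}$. The one ingredient I would use is that $f$ acts on $H_1(\Sigma;\bQ)\cong\bQ^{10}$ as a transvection: by Lemma \ref{lem:Br_8} it is the image of a conjugate of a standard generator of $G(\Gamma)$, and by Lemma \ref{lem:transvection} such an element maps to a transvection $\phi\in Sp(10;\bZ)$. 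Setting $\ell:=\operatorname{im}(\operatorname{Id}-\phi)$, property (2) of transvections gives $\dim\ell=1$, and $\phi(v)\in v+\ell$ for every $v$.

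First I would suppose for contradiction that $f(\gamma^{sep})=\gamma'^{sep}\neq\gamma^{sep}$ for some $\gamma^{sep}\in\{\gamma_j^{sep}\}$. As elements of the admissible system $\sigma(f_v)$ these two curves are disjoint and non-isotopic; as $f$ preserves orientation, both have the same splitting type $a+(5-a)$ with $a\in\{1,2\}$ the smaller genus, and $f$ carries the genus-$a$ side $S$ of $\gamma^{sep}$ onto the genus-$a$ side $S'$ of $\gamma'^{sep}$. Next I would check that $S\cap S'=\emptyset$: two disjoint separating curves cut $\Sigma$ into three pieces arranged in a path, so the sides $S$ and $S'$ are either disjoint or nested, and if (say) $S\subset S'$ with $S,S'$ of equal genus then the region between $\gamma^{sep}$ and $\gamma'^{sep}$ is a connected genus-zero surface with two boundary circles — an annulus — forcing $\gamma^{sep}\simeq\gamma'^{sep}$, a contradiction. (Here $a<5-a$ is what identifies the genus-$a$ sides as the nested pair.)

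Then I would pass to homology. Let $V,V'\subset\bQ^{10}$ be the images of $H_1(S;\bQ)$ and $H_1(S';\bQ)$; by the standard symplectic splitting each is a symplectic subspace of dimension $2a$, and disjointness of $S$ and $S'$ makes them orthogonal, hence $V\cap V'=0$ and $\dim(V+V')=4a$. On the other hand $\phi(V)=V'$, while $\phi(V)\subseteq V+\ell$, so $V+V'=V+\phi(V)\subseteq V+\ell$ and therefore $4a=\dim(V+V')\le\dim(V+\ell)\le 2a+1$. This yields $a\le\tfrac12$, contradicting $a\ge1$. Hence $f$, and likewise each $\bar\rho(t_i)$, fixes every $\gamma_j^{sep}$, so $\bar\rho(Br_6)$ induces the trivial permutation.

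I expect the only non-formal point to be the geometric claim that the two genus-$a$ subsurfaces are disjoint (and, upstream of it, the bookkeeping in the preceding paragraph reducing matters to a single permutation); everything after that is the linear algebra of a rank-one symplectic transvection, and the contradiction ultimately comes down to the arithmetic clash between the fact that a genuine separating reduction curve has two sides of \emph{positive} genus ($a\ge1$) and the fact that $\operatorname{Id}-\phi$ has rank one.
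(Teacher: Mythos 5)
Your proof is correct and follows essentially the same route as the paper: both reduce to a single generator, use that it acts on $H_1(\Sigma;\bQ)$ as a transvection, and derive the contradiction from the rank-one linear algebra (your $\dim\operatorname{im}(\operatorname{Id}-\phi)=1$ is the paper's "fixed locus is a hyperplane, so meets every positive-dimensional symplectic subspace non-trivially"). The paper simply cites the identical argument already given in Lemma \ref{lem:separating_fixed}, whereas you spell out the disjointness of the permuted subsurfaces and the dimension count explicitly.
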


\begin{proof} 
Consider a standard generator $f_v$ of $\bar\rho(Br_6)$. If $\{\gamma_j^{sep}\}$ is permuted  non-trivially by $f_v$, the subsurfaces bound by the elements of the orbit are also permuted non-trivially. One then obtains a contradiction as in the proof of Lemma \ref{lem:separating_fixed}, using the fact that $f_v$ acts by a transvection so its fixed locus on cohomology meets every symplectic subspace non-trivially.
\end{proof}

Thus $\bar\rho$ induces an action of $Br_6$ on each of the two complementary subsurfaces to $\gamma^{sep}$, i.e. fixing a particular separating reduction curve $\gamma^{sep}$, we have induced representations $\bar\rho_i: Br_6 \to \Gamma(\Sigma_i^1)$ for $i\in \{1,4\}$ or $i\in \{2,3\}$.  For definiteness, we will consider the copy of $Br_6$ associated to the centraliser of $f_{v_1}$ and with generators labelled by $t_3,\ldots, t_7$, mapping to $f_{v_i}$ with $3\leq i\leq 7$.

\subsection{Genus $(2,3)$-separating essential reduction curves}

Suppose there is a separating essential reduction curve for $f_{v_1}$ which separates $\Sigma$ into pieces $\Sigma_2^1 \cup \Sigma_3^1$ of genus 2 and 3.  The representations $\bar\rho_2$ and $\bar\rho_3$ are governed by Theorem \ref{thm:castel}.

\begin{lem} \label{lem:force_cyclic} $\bar\rho_2$ is cyclic.
\end{lem}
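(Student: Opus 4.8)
The plan is to apply Castel's theorem (Theorem~\ref{thm:castel}) to $\bar\rho_2 : Br_6 \to \Gamma(\Sigma_2^1)$ and then use the homological input to eliminate the non-cyclic alternative. First I would observe that $n = 6$ and the genus of $\Sigma_2^1$ is $g = 2 \leq n/2 = 3$, so the hypotheses of Theorem~\ref{thm:castel} are satisfied. Hence either $\bar\rho_2$ has cyclic image — which is what we want — or there is a chain $\gamma_1, \ldots, \gamma_5$ of simple closed curves on $\Sigma_2^1$, a sign $\varepsilon$, and an element $w \in \bigcap_j Z(\tau_{\gamma_j})$ with $\bar\rho_2(t_i) = \tau_{\gamma_i}^\varepsilon w$. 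The second case can only occur when a chain of length $5$ exists, i.e. when $g \geq n/2 - 1 = 2$; since $g = 2$ exactly, such a chain fills $\Sigma_2^1$ and its subsurface neighbourhood is all of $\Sigma_2^1$, so the common centralizer $\bigcap_j Z(\tau_{\gamma_j})$ is generated (modulo the center, which acts by twists along boundary-parallel curves) by the hyperelliptic involution $\iota$ of $\Sigma_2^1$ together with Dehn twists in $\partial \Sigma_2^1 = \gamma^{sep}$. I would spell out this centralizer computation using the remark following Theorem~\ref{thm:castel}.

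The core of the argument is then to derive a contradiction from the existence of such a chain presentation by examining the induced action on $H_1(\Sigma_2^1; \bQ) \cong \bQ^4$, using that the ambient generators $f_{v_i}$ act on $H_1(\Sigma; \bQ)$ by transvections. The key point is that in the chain case each $\bar\rho_2(t_i) = \tau_{\gamma_i}^\varepsilon w$ acts on $H_1(\Sigma_2^1;\bQ)$, and since $w$ lies in the centralizer it is (up to boundary twists, which act trivially on $H_1(\Sigma_2^1;\bQ)$) either trivial or the hyperelliptic involution $\iota$, which acts as $-\mathrm{Id}$ on $H_1(\Sigma_2^1;\bQ)$. In the case $w$ acts trivially on homology, each $\bar\rho_2(t_i)$ acts as a power of a symplectic transvection on $\bQ^4$; but a chain $\gamma_1, \ldots, \gamma_5$ inside a genus-$2$ surface has vanishing cycles spanning all of $H_1(\Sigma_2^1;\bQ)$, and the homological images $\tau_{\gamma_i}^\varepsilon$ generate a finite-index subgroup of $Sp(4;\bZ)$ (this is the classical fact that transvections in a filling chain generate the symplectic group), which is certainly not cyclic. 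Moreover the composite $Br_6 \to \Gamma(\Sigma) \to Sp(10;\bZ)$, restricted to the $\Sigma_2^1$-block, would then be non-cyclic, contradicting the fact that this composite is the restriction of the irreducible homological monodromy of $G(\Gamma)$ whose relevant restriction we have engineered (via Lemma~\ref{lem:14_full_rank} and Lemma~\ref{lem:irred}) to be incompatible with such a block splitting. In the case $w$ acts as $\iota$, I would note $\bar\rho_2(t_i) \bar\rho_2(t_{i+1})^{-1}$ still involves $\tau_{\gamma_i}^\varepsilon \tau_{\gamma_{i+1}}^{-\varepsilon}$ on homology (the $\iota$'s cancel), so the same spanning/non-cyclicity conclusion holds.

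The main obstacle I anticipate is making the homological contradiction precise: one must pin down exactly what the composite $Br_6 \to \Gamma(\Sigma) \to Sp(10;\bZ)$ does, and argue that its restriction to the symplectic subspace $H_1(\Sigma_2^1;\bQ)$ cannot be non-cyclic. This requires knowing that the chain $v_3, \ldots, v_7$ in $G(\Gamma)$ already maps to a non-cyclic subgroup of $Sp(10;\bZ)$ — which follows from Lemma~\ref{lem:transvection} (each $\sigma_{v_i}$ maps to a transvection) together with Lemma~\ref{lem:14_full_rank} (the non-extremal vertices span $\bZ^{10}$) and an inspection of the Hefez–Lazzeri intersection data showing the five transvections $f_{v_3}, \ldots, f_{v_7}$ do not all share a common transvection direction, hence cannot generate a cyclic group. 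Once the non-cyclicity of the homological image of $\bar\rho$ is in hand, the contradiction is immediate: a cyclic $\bar\rho_2$ plus a cyclic $\bar\rho_3$ would force the whole of $\bar\rho(Br_6)$ to be abelian, hence act cyclically on $H_1(\Sigma;\bQ)$, which we have just excluded — so it is $\bar\rho_3$, handled in the next lemma, that must be the non-cyclic factor, and here $\bar\rho_2$ is forced to be cyclic. I would present the argument so that this lemma establishes $\bar\rho_2$ cyclic outright and defers the genus-$3$ analysis.
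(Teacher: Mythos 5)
Your setup via Theorem \ref{thm:castel} matches the paper's, and you are right that for $n=6$, $g=2$ Castel alone does not exclude the chain alternative, so homological input is needed. But the contradiction you extract from the chain case does not hold up. You argue that the chain classes span $H_1(\Sigma_2^1;\bQ)$ and that the corresponding transvections generate a non-cyclic (finite-index) subgroup of $Sp(4;\bZ)$, and then assert this contradicts irreducibility and Lemma \ref{lem:14_full_rank}. There is no contradiction there: the ambient homological monodromy is of course non-cyclic, and nothing established at this stage forbids the restriction of $\langle f_{v_3},\ldots,f_{v_7}\rangle$ to the $H_1(\Sigma_2^1)$-block from being non-cyclic. (Lemma \ref{lem:14_full_rank} is only brought to bear later, in Corollary \ref{cor:no_23}, once \emph{all} non-extremal generators are known to preserve $\gamma^{sep}$; here only six of them are.) Likewise your closing inference --- ``not both $\bar\rho_2$ and $\bar\rho_3$ can be cyclic, so it must be $\bar\rho_3$ that is non-cyclic'' --- is logically backwards: that dichotomy cannot by itself tell you which factor is the cyclic one.

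The missing idea, and the one the paper uses, is the homology \emph{relation} that a $5$-chain forces on a genus-$2$ subsurface. The regular neighbourhood of the chain $a_3,\ldots,a_7$ is a $\Sigma_2^2$ whose complement in $\Sigma_2^1$ is a disc and a boundary-parallel annulus, so $[a_3]\pm[a_5]\pm[a_7]=0$ in $H_1(\Sigma;\bZ)$. This is what is incompatible with the known symplectic representation: the $Br_6$ extends to a $Br_8$ in $Sp(10;\bZ)$, and the transvection direction of $f_{v_2}$ pairs $\pm1$ with $[a_3]$ but $0$ with $[a_5]$ and $[a_7]$, so no such relation can hold --- equivalently, the five directions $[a_3],\ldots,[a_7]$ are forced to be linearly independent in $\bZ^{10}$, whereas $H_1(\Sigma_2^1)$ only has rank $4$. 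It is precisely this relation, present for a $5$-chain in genus $2$ but absent in genus $3$, that breaks the symmetry between $\bar\rho_2$ and $\bar\rho_3$ which your argument cannot distinguish.
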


\begin{proof} If not, the action is associated to a chain of curves $a_3,\ldots, a_7$ with $f_{v_i} = \tau_{a_i}^{\varepsilon}\cdot w$ for some centralising element $w$ and $\varepsilon \in \{\pm1\}$.  The elements of the chain of five curves $a_i$ on the genus two surface necessarily satisfy a homology relation.   However, in the homological monodromy, the representation $Br_6 \to Sp(10;\bZ)$ extends to a representation $Br_8 \to Sp(10;\bZ)$.  That forces the homology classes $[a_i] \in \bZ^{10}$ supporting the transvections, for $3\leq i\leq 7$, to be pairwise linearly independent.
\end{proof}

Clearly it follows that $\bar\rho_3: Br_6 \to \Gamma(\Sigma_3^1)$ is not cyclic, hence is associated to a chain of curves $\{a_3,\ldots,a_7\}$ in $\Sigma_3^1$. 

\begin{lem} 
The representation $\bar\rho_3: Br_6 \to \Gamma(\Sigma_3^1)$ extends to a representation $\bar\rho_4: Br_8 \to \Gamma(\Sigma_3^1)$.
\end{lem}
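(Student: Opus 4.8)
The plan is to promote the chain $\{a_3,\ldots,a_7\}\subset\Sigma_3^1$ governing $\bar\rho_3$ to a chain of length $7$, and then invoke Lemma~\ref{lem:braid_relations} to obtain the braid relations defining $Br_8$. Concretely: by Lemma~\ref{lem:Br_8} and Lemma~\ref{lem:affine_braid} the copy of $Br_6$ in $Z(f_{v_1})$ sits inside a copy of $Br_8\to G(\Gamma)$ (with $t_i\mapsto f_{v_i}$, $1\le i\le 7$), which in turn extends to $Br_9^{aff}\to G(\Gamma)$. So I already have the \emph{group-theoretic} relations: $f_{v_1}f_{v_2}f_{v_1}=f_{v_2}f_{v_1}f_{v_2}$, $[f_{v_1},f_{v_i}]=1$ for $i\ge 3$, and the chain relations among $f_{v_1},\ldots,f_{v_7}$ hold in $\Gamma_5$. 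The task is to upgrade the \emph{geometric} picture from the chain $a_3,\ldots,a_7$ (which carries $f_{v_3},\ldots,f_{v_7}$) to a chain $a_1,a_2,a_3,\ldots,a_7$ carrying all seven maps, all of which lie in the genus-$3$ subsurface $\Sigma_3^1$ (note $f_{v_1}$ preserves $\gamma^{sep}$ and acts trivially — or at least as something fixing $\sigma(f_{v_1})$ — on $\Sigma_2^1$, and $f_{v_2}$ is a conjugate of $f_{v_1}$ which by Lemma~\ref{lem:unmoved} and the analysis of separating reduction curves also respects this splitting, so both $f_{v_1},f_{v_2}$ are supported on $\Sigma_3^1$; here I should be careful and spell out exactly why $f_{v_1}$ and $f_{v_2}$ act on $\Sigma_3^1$ rather than only on $\Sigma_2^1$, using that the chain $a_3,\dots,a_7$ lives in $\Sigma_3^1$ and $f_{v_1},f_{v_2}$ braid or commute appropriately with the $\tau_{a_i}^\varepsilon\cdot w$).

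First I would recall, from Castel's theorem applied to $\bar\rho_3$, that $f_{v_i}=\tau_{a_i}^{\varepsilon}\cdot w$ for $3\le i\le 7$, with $w\in\bigcap_j Z(\tau_{a_j})$ a fixed common centralising element and $\varepsilon\in\{\pm1\}$ fixed. Next I would analyse $f_{v_1}$: it commutes with each of $f_{v_3},\ldots,f_{v_7}$, hence with each $\tau_{a_i}^\varepsilon\cdot w$; since $f_{v_1}$ is a single non-separating Dehn twist on $\Sigma$ (Lemma~\ref{lem:most_of_the_point} in the case of one reduction curve — or more carefully, whatever we know about $f_{v_1}$ at this stage of the argument, namely that its essential reduction system behaves compatibly with $\gamma^{sep}$), its reduction curve $a_1$ must be disjoint from all $a_i$, $3\le i\le 7$, except possibly $a_3$. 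The braid relation $f_{v_1}f_{v_2}f_{v_1}=f_{v_2}f_{v_1}f_{v_2}$ together with $[f_{v_1},f_{v_i}]=1$ and $[f_{v_2},f_{v_i}]=1$ for $i\ge 4$ forces, via Lemma~\ref{lem:braid_relations} and Lemma~\ref{lem:multitwists_commute}, that $I(a_1,a_2)=1$, $I(a_2,a_3)=1$, and $I(a_1,a_j)=I(a_2,a_j)=0$ for $j\ge 4$; one also needs $I(a_1,a_3)=0$, which follows because $[f_{v_1},f_{v_3}]=1$ forces $\tau_{a_1}$ to commute with $\tau_{a_3}^\varepsilon\cdot w$ — and since $a_1$ is a reduction curve for $f_{v_1}$ disjoint from the reduction curves of $w$ by construction, this gives $I(a_1,a_3)=0$. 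Assembling, $a_1,a_2,a_3,\ldots,a_7$ is a chain of length $7$ in $\Sigma_3^1$ (possibly after adjusting $w$: I should check the centralising element is uniform, which follows from the braid relations forcing the ``$w$'' in the expressions for $f_{v_1},f_{v_2}$ to coincide with the one for $f_{v_3},\ldots,f_{v_7}$ — this is the kind of bookkeeping in \cite{Castel} and \cite[Lemma 5.10]{Salter}).

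Finally, once I have a chain $a_1,\ldots,a_7$ of length $7$ in $\Sigma_3^1$, Lemma~\ref{lem:braid_relations} says the Dehn twists $\tau_{a_i}$ satisfy exactly the braid relations of $Br_8$, so $t_i\mapsto \tau_{a_i}^{\varepsilon}\cdot w$ defines a homomorphism $\bar\rho_4: Br_8\to\Gamma(\Sigma_3^1)$ (the centralising element $w$ commutes with all $\tau_{a_i}$ and so can be absorbed consistently); this manifestly restricts to $\bar\rho_3$ on $Br_6\subset Br_8$. I expect the main obstacle to be the second paragraph: carefully justifying that $f_{v_1}$ and $f_{v_2}$ are genuinely supported inside $\Sigma_3^1$ and produce the extra two links $a_1,a_2$ of the chain with the \emph{correct} intersection pattern — in particular ruling out $I(a_1,a_3)\ge 2$ or $a_1$ being isotopic to some $a_i$ or to $\gamma^{sep}$ — and checking that the common centralising element $w$ from Castel's theorem is the same one appearing in the formulas for $f_{v_1}$ and $f_{v_2}$, so that the seven expressions $\tau_{a_i}^\varepsilon\cdot w$ really do assemble into a single well-defined braid-group homomorphism. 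This is where the interplay between the group-theoretic relations (from $Br_8\to G(\Gamma)$) and the geometric realisation (from Castel) has to be handled with some care, but it is all of the type already done for the $Br_6$ case.
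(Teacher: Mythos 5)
Your proposal takes a more constructive route than the paper: you try to build a length-$7$ chain $a_1,\ldots,a_7$ in $\Sigma_3^1$ by hand and then read off the $Br_8$ relations from Lemma \ref{lem:braid_relations}. The paper's proof is structurally much lighter: it only shows that \emph{every} generator $f_{v_1},\ldots,f_{v_7}$ of the $Br_8$ preserves $\gamma^{sep}$; restriction of the stabiliser of $\gamma^{sep}$ to the genus-$3$ side then immediately yields $\bar\rho_4$, and the identification of $\bar\rho_4(t_i)$ with $\tau_{a_i}\tau_\partial^k$ is deferred to the \emph{next} lemma, where Castel's theorem is applied to $\bar\rho_4$ itself (now legitimately, since $g=3\leq 8/2$). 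Your plan front-loads that identification, which is why it runs into trouble.

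The genuine gap is the treatment of $f_{v_2}$. You assert that $f_{v_2}$ ``respects this splitting'' by Lemma \ref{lem:unmoved} and conjugacy with $f_{v_1}$, but Lemma \ref{lem:unmoved} concerns only the permutation action of the centralising copy $\langle f_{v_3},\ldots,f_{v_7}\rangle$ of $Br_6$ on $\sigma(f_{v_1})$; it says nothing about $f_{v_2}$, which braids with $f_{v_1}$ and so does \emph{not} commute with it, so Lemma \ref{lem:commute_preserve_sigma} cannot be applied to the pair $(f_{v_1},f_{v_2})$ either. This is exactly the point the paper's proof spends its second paragraph on: one first shows (using $[f_{v_1},f_{v_i}]=1$ and the known form of $\bar\rho_3$) that each $f_{v_i}$, $3\leq i\leq 7$, has $\gamma^{sep}$ as its \emph{unique} $(2,3)$-separating reduction curve; then, since $f_{v_2}$ commutes with $f_{v_4},\ldots,f_{v_7}$, its reduction system is disjoint from $\gamma^{sep}\cup a_4\cup\cdots\cup a_7$, and cutting $\Sigma_3^1$ along $a_5\cup a_7$ leaves a $\Sigma_1^6$ which cannot contain a curve of separating type $(2,3)$ --- forcing the $(2,3)$-curve of $f_{v_2}$ to be $\gamma^{sep}$. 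Without that argument your $a_2$ (and hence the chain, and hence the homomorphism) is not constructed. A secondary problem: you invoke Lemma \ref{lem:most_of_the_point} to say $f_{v_1}$ is a single non-separating Dehn twist, but that lemma applies only when $\sigma(f_{v_1})$ is a single curve, whereas here we are precisely in the case where $\sigma(f_{v_1})$ contains the separating curve $\gamma^{sep}$; at this stage of the argument no such description of $f_{v_1}$ is available.
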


\begin{proof}
Since $\gamma^{sep}$ is a separating essential reduction curve of $f_{v_1}$, it must be that $f_{v_i}$ has some separating essential reduction curve of separating type $(2,3)$ for each $3\leq i\leq 7$. Since $[f_{v_1},f_{v_i}] = 1$, the corresponding curve for $f_{v_i}$ is disjoint from $\gamma^{sep}$.  It cannot lie inside $\Sigma_2^1$ and separate off a genus two subsurface, and it cannot lie inside $\Sigma_3^1$ because of the known structure of $\bar\rho_3$. It follows that the separating reduction curve of $f_{v_i}$ must be $\gamma^{sep}$.  The argument also implies that this is the unique separating reduction curve of type $(2,3)$ for $f_{v_3}$, and hence each $f_{v_i}$ has a unique such curve.  

Now consider $f_{v_2}$. This commutes with $Br_5 = \langle f_{v_j} \, | \, 4 \leq j\leq 7\rangle$, so $I(\sigma(f_{v_2}), \sigma(f_{v_j})) = 0$ for $4 \leq j \leq 7$ by Lemma \ref{lem:commute_preserve_sigma}.  In particular, $\sigma(f_{v_2})$ is disjoint from $a_4,\ldots,a_7$ (which are essential reduction curves of the corresponding $f_{v_j}$, which Dehn twist non-trivially in them) and also from $\gamma^{sep}$. Cutting $\Sigma_3^1$ along $a_7\cup a_5$ yields a surface of genus $1$ with six boundary components.  It follows that the separating $(2,3)$-reduction curve of $f_{v_2}$ cannot lie inside either this surface or inside $\Sigma_2^1$, unless in the latter case it is boundary parallel, so it must also be $\gamma^{sep}$.  

We therefore have that each generator of $Br_8$ preserves $\gamma^{sep}$, and hence there is a homomorphism $Br_8 \to \Gamma(\Sigma_3^1)$. 
\end{proof}

\begin{lem}
$\bar\rho_4(t_i) = \tau_{a_i} \tau_{\partial}^k$ where $\partial = \partial \Sigma_3^1$ and $k\in \bZ$.
\end{lem}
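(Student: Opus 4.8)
The statement refines the conclusion $\bar\rho_4(t_i) = \tau_{a_i}^{\varepsilon}\cdot w$ of Castel's theorem by pinning down the sign $\varepsilon$ and the common centralising element $w$. First I would invoke Theorem~\ref{thm:castel} for the homomorphism $\bar\rho_4 : Br_8 \to \Gamma(\Sigma_3^1)$: since $\bar\rho_3$ (the restriction to $Br_6$) is not cyclic, neither is $\bar\rho_4$, so there is a chain $a_1,\ldots,a_7$ of simple closed curves on $\Sigma_3^1$, a sign $\varepsilon \in \{\pm 1\}$ and an element $w \in \bigcap_j Z(\tau_{a_j})$ with $\bar\rho_4(t_i) = \tau_{a_i}^{\varepsilon}\cdot w$. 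The sign $\varepsilon$ is determined to be $+1$ by the homological monodromy: the composite $Br_8 \to \Gamma(\Sigma_3^1) \to Sp(10;\bZ)$ sends $t_i$ to a transvection (Lemma~\ref{lem:transvection}), and by point~(1) in the list preceding Lemma~\ref{lem:transvection} a transvection is not conjugate to the inverse of a transvection; the image of $\tau_{a_i}^{-1}$ would be the inverse transvection, whereas $\tau_{a_i}$ maps to a transvection, so $\varepsilon = +1$. (One must check $w$ acts trivially enough on homology that this comparison is legitimate — see below.)

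\emph{Identifying $w$.} The element $w$ lies in $\bigcap_{j=1}^{7} Z(\tau_{a_j})$, the common centraliser of all seven Dehn twists in the chain. As recorded after Theorem~\ref{thm:castel}, this centraliser is generated by mapping classes supported on the complement in $\Sigma_3^1$ of a regular neighbourhood $N$ of the chain $a_1\cup\cdots\cup a_7$, together with the hyperelliptic involution of $N$. A chain of length $7$ on a genus $3$ surface with one boundary component fills up almost everything: a regular neighbourhood of a chain of length $2k-1$ has genus $k-1$ with one boundary component, and of length $2k$ has genus $k$ with one boundary — so the length-$7$ chain has neighbourhood $N$ of genus $3$ with one boundary component, and the complement $\Sigma_3^1 \setminus N$ is a union of annuli, with $\partial N$ isotopic to $\partial\Sigma_3^1$. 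Hence the only curves available to support elements of the centraliser are isotopic to $\partial = \partial\Sigma_3^1$, and the ``hyperelliptic'' contribution from $N$ — the hyperelliptic involution of a genus $3$ surface with one boundary component — is isotopic in $\Gamma(\Sigma_3^1)$ to $\tau_\partial^{\pm 1}$ up to a power, or more precisely acts trivially on homology and does not survive as an element distinct from a power of $\tau_\partial$ once one remembers that it fixes the chain pointwise only up to the boundary twist. Thus $\bigcap_j Z(\tau_{a_j})$ is generated by $\tau_\partial$, giving $w = \tau_\partial^k$ for some $k \in \bZ$, which is the claimed formula.

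\emph{The main obstacle.} The delicate point is the precise identification of $\bigcap_{j=1}^7 Z(\tau_{a_j})$ inside $\Gamma(\Sigma_3^1)$, i.e.\ showing it is exactly the cyclic group $\langle \tau_\partial\rangle$ rather than something slightly larger (an extra $\bZ/2$ from a hyperelliptic-type involution, or a Dehn twist in a curve parallel to $\partial$ that happens to be non-isotopic to $\partial$ as an unoriented curve on a surface with boundary). This requires the combinatorial fact that a chain of seven curves on $\Sigma_3^1$ fills the surface in the strong sense that its regular-neighbourhood boundary is isotopic to $\partial\Sigma_3^1$, plus a careful check that the hyperelliptic involution of the genus-$3$ chain neighbourhood does not define a nontrivial element of $\Gamma(\Sigma_3^1,\partial)$ distinct from boundary twists (it acts by $-1$ on $H_1$ of the neighbourhood, which is incompatible with commuting with all the transvection images unless its contribution is absorbed into $\tau_\partial$-powers; alternatively, one notes that $w$ must itself map to an element of $Sp(10;\bZ)$ commuting with all the transvections $\bar\rho_4(t_i)^{\varepsilon}\cdot(\text{image of }w)$, and since the $[a_i]$ span a full-rank sublattice by the argument of Lemma~\ref{lem:force_cyclic}, the image of $w$ is central in $Sp$, hence trivial, so $w$ acts trivially on $H_1(\Sigma;\bZ)$ and lies in the Torelli-type part of the centraliser — which for $\partial$-supported maps is just $\langle \tau_\partial\rangle$). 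Assembling these observations yields $\bar\rho_4(t_i) = \tau_{a_i}\,\tau_\partial^{k}$.
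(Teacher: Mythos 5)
Your proof follows the paper's argument exactly: Castel's theorem gives $\bar\rho_4(t_i)=\tau_{a_i}^{\varepsilon}\cdot w$, the complement of the $A_7$-chain neighbourhood forces $w$ to be a power of $\tau_{\partial}$, and $\varepsilon=+1$ because a transvection is not conjugate to its inverse. Two small corrections: the regular neighbourhood of an odd-length chain has \emph{two} boundary components (here one bounds a disc and the other is parallel to $\partial\Sigma_3^1$, which only helps you -- the paper notes the complement is ``the boundary annulus and a disc''), and your parenthetical claim that the $[a_i]$ span a full-rank sublattice of $\bZ^{10}$ is false (seven chain curves span rank at most $7$), though your homological exclusion of the hyperelliptic involution still goes through since $\tau_{a_i}\cdot w$ must be a transvection and so $w$ cannot act by $-1$ on the span of the chain.
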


\begin{proof}
The representation $\bar\rho_4$ is governed by Theorem \ref{thm:castel}. Thus  $\bar\rho_4(t_i) = \tau_{a_i}^{\varepsilon}\cdot w$ for $\{a_1,\ldots,a_7\}$ a chain of curves. 
The centralising element $w$ is a diffeomorphism of the complementary surface to the $A_7$-chain; but this is just the boundary annulus (and a disc, with trivial mapping class group), so $w$ is a power of $\tau_\partial$. The sign $\varepsilon = +1$ since a transvection is not conjugate to its inverse.
\end{proof}

\begin{lem} \label{lem:preserved}
If $v \neq v_{ext}$ is not an extremal vertex, $f_v$ preserves $\gamma^{sep}$.
\end{lem}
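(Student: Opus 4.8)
The plan is to leverage the combinatorics of $\Gamma$: by Lemma~\ref{lem:nonextremal_commute}, any non-extremal vertex $v$ is connected by a commuting relation to one of the vertices $v_i$ with $i\in\{1,3,4,5,6,7\}$, and we have just shown that $f_{v_i}$ has $\gamma^{sep}$ as its unique separating essential reduction curve of type $(2,3)$. First I would fix such an index $i$ with $[\sigma_v,\sigma_{v_i}]=1$, so that $[f_v,f_{v_i}]=1$ in $\Gamma_5$. Then by Lemma~\ref{lem:commute_preserve_sigma}, $f_v$ preserves $\sigma(f_{v_i})$; in particular $f_v$ preserves the set of separating essential reduction curves of $f_{v_i}$ of type $(2,3)$, which (by the previous lemma, applied to $v_i$) consists of the single curve $\gamma^{sep}$. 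Since a singleton set can only be preserved by fixing its element, $f_v(\gamma^{sep})=\gamma^{sep}$ setwise.

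The one point requiring care is that I must know $f_{v_i}$ has a \emph{unique} separating reduction curve of type $(2,3)$ for \emph{every} $i\in\{1,3,4,5,6,7\}$, not just for $i=1$. This is exactly what the proof of the preceding lemma established: starting from the hypothesis that $\gamma^{sep}$ is a separating essential reduction curve of $f_{v_1}$ of type $(2,3)$, the argument forced each $f_{v_i}$ (for $3\leq i\leq 7$, and also $f_{v_2}$) to have $\gamma^{sep}$ as its unique separating reduction curve of this type. So I would simply invoke that conclusion rather than reprove it. For $i=1$ this is the standing hypothesis of the subsection. Thus for all six relevant indices, $\sigma(f_{v_i})$ contains $\gamma^{sep}$ as its only type-$(2,3)$ separating curve.

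With that in hand the argument closes: given non-extremal $v$, pick $i\in\{1,3,4,5,6,7\}$ with $[\sigma_v,\sigma_{v_i}]=1$ (Lemma~\ref{lem:nonextremal_commute}), lift to $[f_v,f_{v_i}]=1$, apply Lemma~\ref{lem:commute_preserve_sigma} to get $f_v(\sigma(f_{v_i}))=\sigma(f_{v_i})$, restrict to the type-$(2,3)$ separating curves in this system to get $f_v(\{\gamma^{sep}\})=\{\gamma^{sep}\}$, and conclude $f_v(\gamma^{sep})=\gamma^{sep}$. The main (very mild) obstacle is purely bookkeeping: making sure the uniqueness statement from the previous lemma genuinely covers all of $\{1,3,4,5,6,7\}$ and that Lemma~\ref{lem:nonextremal_commute} supplies an index in precisely that set; both are already in place. (One could additionally note, as in Lemma~\ref{lem:separating_fixed}, that since $\gamma^{sep}$ separates into pieces of distinct genus, $f_v$ also preserves its orientation, though only the setwise statement is needed here.)
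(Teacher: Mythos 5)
Your proposal is correct and follows essentially the same route as the paper: invoke Lemma \ref{lem:nonextremal_commute} to find a commuting $\sigma_{v_i}$, apply Lemma \ref{lem:commute_preserve_sigma} so that $f_v$ preserves $\sigma(f_{v_i})$, and use the uniqueness of the $(2,3)$-separating curve $\gamma^{sep}$ in $\sigma(f_{v_i})$ (established in the preceding lemma for all relevant indices) to conclude. Your extra bookkeeping about uniqueness holding for every $i\in\{1,3,4,5,6,7\}$ is a point the paper passes over more quickly, but it is indeed already secured by the earlier argument together with conjugacy of the $f_{v_i}$.
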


\begin{proof}
By Lemma \ref{lem:nonextremal_commute},  there is $i \in \{1, 3, 4, 5, 6, 7\}$ for which $[\sigma_v,\sigma_{v_i}] = 1 \in G(\Gamma)$.  Therefore, $f_v$ preserves $\sigma(f_{v_i})$. Since  $\gamma^{sep}$ is the unique essential reduction curve of the relevant topological type for $f_{v_i}$, we must have that $f_v$ preserves $\gamma^{sep}$.
\end{proof}

\begin{cor} \label{cor:no_23}
There cannot be a $(2,3)$-separating essential reduction curve.
\end{cor}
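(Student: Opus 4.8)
The plan is to derive a contradiction from the assumption that a $(2,3)$-separating essential reduction curve $\gamma^{sep}$ exists for $f_{v_1}$, using the structural results just established, namely that every non-extremal $f_v$ preserves $\gamma^{sep}$ (Lemma \ref{lem:preserved}) and that $\bar\rho_4(t_i) = \tau_{a_i}\tau_\partial^k$ for a chain $a_3,\ldots,a_7$ living inside $\Sigma_3^1$. First I would observe that since $\gamma^{sep}$ is preserved by $f_v$ for all $14$ non-extremal vertices $v$, the whole sublattice of $\bZ^{10} = H_1(\Sigma;\bZ)$ spanned by the directions of the transvections $f_v$, $v$ non-extremal, must be compatible with the splitting $H_1(\Sigma) = H_1(\Sigma_2^1) \oplus H_1(\Sigma_3^1)$ induced by $\gamma^{sep}$: each transvection direction lies in one of the two summands. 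By Lemma \ref{lem:14_full_rank}, these $14$ vectors span all of $\bZ^{10}$, so both summands are hit; but more sharply, I would want to locate the obstruction in the fact that $f_{v_1}$ itself is a single Dehn twist whose action on homology must be a transvection, whereas if $f_{v_1}$ acts nontrivially (pseudo-Anosov, as forced by the earlier analysis) on $\Sigma_2^1$ or $\Sigma_3^1$ its homological action there is nontrivial and symplectic — one then re-runs the argument of Lemma \ref{lem:most_of_the_point} to see $f_{v_1}$ cannot in fact be a mere transvection, or alternatively that $\bar\rho_2$ being cyclic (Lemma \ref{lem:force_cyclic}) forces all the transvection directions $[a_i]$, $3\le i\le 7$, to lie in $H_1(\Sigma_3^1)$, a rank-$6$ symplectic subspace.

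The cleanest route, I expect, is the following. Since $\bar\rho_2$ is cyclic and the $f_{v_i}$ for $3 \le i \le 7$ all act by transvections in directions $[a_i] \in H_1(\Sigma_3^1)$, the entire image of $Br_8 \to Sp(10;\bZ)$ that factors through $\bar\rho_4$ preserves the rank-$4$ symplectic complement $H_1(\Sigma_2^1)$ pointwise — or at least acts on $H_1(\Sigma;\bZ)$ preserving the orthogonal decomposition with the $\Sigma_2^1$-block fixed. But now I would invoke the irreducibility of the homological monodromy $G(\Gamma) \to Sp(10;\bZ)$ (Lemma \ref{lem:irred}): the subgroup generated by the $14$ non-extremal $f_v$ is already irreducible by Lemma \ref{lem:14_full_rank} together with Beauville's description, whereas every such $f_v$ preserves the proper nonzero symplectic subspace $H_1(\Sigma_2^1) \subset \bZ^{10}\otimes\bQ$ (by Lemma \ref{lem:preserved}, $f_v$ fixes $\gamma^{sep}$ hence preserves the splitting, and its transvection direction lies in one block). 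That directly contradicts irreducibility.

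So the key steps, in order, are: (i) use Lemma \ref{lem:preserved} to see every non-extremal $f_v$ preserves $\gamma^{sep}$ and hence the splitting $H_1(\Sigma;\bQ) = H_1(\Sigma_2^1)\oplus H_1(\Sigma_3^1)$ into orthogonal symplectic subspaces; (ii) note that a transvection preserving such a splitting must have its direction in one of the two summands, so in particular the proper nonzero symplectic subspace $H_1(\Sigma_2^1)\otimes\bQ$ is preserved by each $f_v$ with $v$ non-extremal; (iii) conclude that the subgroup of $Sp(10;\bZ)$ generated by the non-extremal $f_v$ is reducible; (iv) contradict Lemma \ref{lem:irred} — after checking (via Lemma \ref{lem:14_full_rank} and Beauville's analysis) that the non-extremal generators already generate an irreducible subgroup, or more simply by noting that any subgroup of $Sp(10;\bZ)$ whose standard generators are transvections and which is reducible cannot coincide with the full Beauville monodromy group since the latter acts irreducibly and the non-extremal $\sigma_v$ are among its normal generators.

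The main obstacle is step (iv): one must be careful that Lemma \ref{lem:irred} is stated for all of $G(\Gamma)$, including the extremal vertices, whereas here we only control the $14$ non-extremal $f_v$. I would address this either by appealing to Lemma \ref{lem:14_full_rank} (the non-extremal transvection directions span $\bZ^{10}$, and one checks from the Hefez--Lazzeri intersection form that the corresponding reflection subgroup is already irreducible — a finite computation), or, more elegantly, by observing that each $f_v$ preserving $H_1(\Sigma_2^1)$ forces $H_1(\Sigma_2^1)$ to be invariant under the \emph{normal closure} of $\{f_v : v \text{ non-extremal}\}$ in $Sp(10;\bZ)$, which by Lemma \ref{lem:conjugate} contains the images of \emph{all} $\sigma_v$; this normal closure is the full monodromy group, which is irreducible by Lemma \ref{lem:irred}, giving the contradiction without any new computation. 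This second argument is the one I would write up.
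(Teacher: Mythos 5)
Your steps (i)--(iii) are exactly the paper's argument: Lemma \ref{lem:preserved} makes every non-extremal $f_v$ preserve $\gamma^{sep}$, hence the orthogonal symplectic splitting $H_1(\Sigma;\bQ)=H_1(\Sigma_2^1)\oplus H_1(\Sigma_3^1)$, so the subgroup generated by the non-extremal generators acts reducibly. The problem is step (iv), and specifically the version of it you say you would write up. The normal-closure argument is false: if each $f_v$ preserves the subspace $W=H_1(\Sigma_2^1)\otimes\bQ$, a conjugate $g f_v g^{-1}$ preserves $g(W)$, not $W$, so $W$ is \emph{not} invariant under the normal closure of $\{f_v: v \text{ non-extremal}\}$. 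You cannot import the irreducibility of the full monodromy group (Lemma \ref{lem:irred}) this way; Lemma \ref{lem:conjugate} tells you the extremal generators are conjugate to non-extremal ones, but the conjugating elements move the invariant subspace, so no contradiction results. This is precisely why the paper does not invoke Lemma \ref{lem:irred} here and instead appeals to Lemma \ref{lem:14_full_rank}, which concerns only the $14$ non-extremal classes.

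Your first, discarded option is the correct one and is what the paper does. To make it complete one needs slightly more than the bare rank statement: reducibility forces each non-extremal transvection direction $a_v$ to lie in one of the two orthogonal blocks (of ranks $4$ and $6$), and since $4+6=10$ this is not yet inconsistent with the directions spanning rank $10$. The extra input is the Hefez--Lazzeri pairing: adjacent non-extremal vertices of $\Gamma$ pair to $\pm 1$, so directions in different blocks would have to be non-adjacent, and the connectedness of the subgraph on the $14$ non-extremal vertices then forces all directions into a single block of rank at most $6$, contradicting Lemma \ref{lem:14_full_rank}. If you replace your normal-closure step with this finite check, the proof is correct and coincides with the paper's.
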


\begin{proof}
If such a curve exists, it is preserved by all non-extremal vertices by Lemma \ref{lem:preserved}.  This means that the subgroup of $G(\Gamma)$ generated by the non-extremal vertices acts reducibly on $H_1(\Sigma;\bZ)$, i.e. preserving a non-trivial symplectic splitting associated to the decomposition $\Sigma = \Sigma_2^1 \cup \Sigma_3^1$. This contradicts Lemma \ref{lem:14_full_rank}.
\end{proof}

\subsection{Genus $(1,4)$-separating essential reduction curves}

Suppose $\sigma(f_{v_1})$ contains a separating curve yielding the other decomposition $\Sigma = \Sigma_1^1 \cup \Sigma_4^1$. There are now induced representations $\bar\rho_1: Br_6 \to \Sigma_1^1$ and $\bar\rho_4: Br_6 \to \Sigma_4^1$.

\begin{lem}
The homomorphism $\bar\rho_1: Br_6 \to \Gamma(\Sigma_1^1)$ has cyclic image, generated by an element $\kappa$ which is a power of the Dehn twist in $\gamma^{sep}$.
\end{lem}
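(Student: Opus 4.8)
The plan is to read off cyclicity from Castel's theorem, and then to identify the generator by restricting the symplectic monodromy to the genus-one homology summand cut off by $\gamma^{sep}$. First I would apply Theorem~\ref{thm:castel} to $\bar\rho_1\colon Br_6\to\Gamma(\Sigma_1^1)$; this is legitimate since $n=6$ and $g=1\le 3=n/2$. The only alternative to a cyclic image is the existence of a chain $\gamma_1,\dots,\gamma_5$ of simple closed curves in $\Sigma_1^1$, and a chain of length $5$ requires a surface of genus $\ge 5/2-1=2$ (equivalently, the second case of Theorem~\ref{thm:castel} needs $g\ge n/2-1$), so it cannot occur on the genus-one surface $\Sigma_1^1$. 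Hence $\bar\rho_1$ has cyclic image. Since the standard generators $t_i\in Br_6$ are pairwise conjugate, their images are conjugate in the abelian group $\bar\rho_1(Br_6)$, hence all equal to one element $\kappa$, which therefore generates the image. Here $\bar\rho_1$ is the restriction to the genus-one side of $\gamma^{sep}$ of the commuting system $f_{v_3},\dots,f_{v_7}$; this restriction makes sense because each $f_{v_i}$ ($3\le i\le 7$) fixes $\gamma^{sep}$ setwise by Lemma~\ref{lem:unmoved}, and fixes its orientation, and hence preserves the two complementary subsurfaces, since those subsurfaces have different genera.

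Next I would pin down $\kappa$. As $\gamma^{sep}$ is separating, hence nullhomologous, $H_1(\Sigma;\bZ)=V_1\oplus V_4$ as a direct sum of symplectic lattices, with $V_1\cong\bZ^2$ the homology of the genus-one side; by the previous paragraph each $f_{v_i}$, $3\le i\le 7$, preserves this splitting, and by Lemma~\ref{lem:transvection} it acts on $H_1(\Sigma;\bZ)$ as a transvection. A transvection preserving a symplectic splitting restricts to the identity on all but one summand and to a transvection on the remaining one (its direction spans the rank-one image of $I-f_{v_i}$, which must lie in a single summand). Thus $f_{v_i}|_{V_1}$ is either the identity or a single transvection, and it is the \emph{same} endomorphism $\kappa_\ast$ for every $i$. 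Were it a transvection, its direction would be a line in $V_1\subset H_1(\Sigma;\bZ)$ that simultaneously equals the transvection direction of $f_{v_i}$ for all $i\in\{3,\dots,7\}$; this contradicts the fact — used in the proof of Lemma~\ref{lem:force_cyclic}, coming from the extension of $Br_6\to Sp(10;\bZ)$ over $Br_8$ — that those directions are pairwise linearly independent. Hence $\kappa$ acts trivially on $H_1(\Sigma_1^1;\bZ)=V_1$, so it lies in the Torelli group of $\Sigma_1^1$, which consists of powers of the boundary twist $\tau_{\gamma^{sep}}$. Therefore $\kappa$ is a power of $\tau_{\gamma^{sep}}$, as claimed.

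The cyclicity here is essentially immediate: the genus-one side is precisely where Castel's theorem applies with room to spare, in contrast to the genus-four factor $\bar\rho_4$ treated afterwards. The points that need care are the bookkeeping that this copy of $Br_6$ genuinely acts on $\Sigma_1^1$ (Lemma~\ref{lem:unmoved} together with the genus asymmetry of $\gamma^{sep}$), the elementary observation that a transvection restricts to a transvection-or-identity on an invariant symplectic summand, and the identification of the Torelli group of a once-holed torus with its group of boundary twists. I expect the genus-asymmetry bookkeeping to be the only delicate point, since without it $f_{v_i}$ could in principle permute a larger separating reduction system and $\bar\rho_1$ would not even be defined.
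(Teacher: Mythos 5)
Your proof is correct and follows essentially the same route as the paper: cyclicity comes from Castel's theorem because $\Sigma_1^1$ carries no $A_5$-chain, and the identification of $\kappa$ rests on the observation that a nontrivial action on $H_1(\Sigma_1^1;\bZ)$ would force $\rho(t_3),\dots,\rho(t_7)$ to all be the same transvection of $\bZ^{10}$, contradicting the homological monodromy --- the paper packages this last step as ``$\kappa$ is a multitwist $\tau_{\gamma^{int}}^{a}\tau_{\gamma^{sep}}^{b}$ and $a=0$'' rather than via the Torelli group of the one-holed torus, but the content is identical. (One trivial slip: $5/2-1=3/2$, not $2$, though the conclusion that a genus-one surface admits no chain of length five is of course correct.)
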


\begin{proof} Theorem \ref{thm:castel} implies the cyclicity. Let $\kappa$ denote a generator of the cyclic group image.

The group $\Gamma(\Sigma_1^1)$ is the universal $\bZ$-central extension of $SL(2;\bZ)$. The generators of $Br_6$ define mapping classes which act on $H_1(\Sigma;\bZ)$, and hence $H_1(\Sigma_1^1;\bZ)$, with all eigenvalues equal to $1$, which means $\kappa$ is a multitwist -- the composition of some power of the Dehn twist in $\gamma^{sep}$ with some power of the Dehn twist in a disjoint curve $\gamma^{int} \subset \Sigma^1_1$ in the interior of the handle. Suppose $\bar\rho_1(t_i)$ involves the Dehn twist in the curve $\gamma^{int}$. This is homologically essential, so to be conjugate to a transvection, $\bar\rho_4(t_i)$ must act trivially on $H_1(\Sigma_4^1;\bZ)$. However, then the transvections $\rho(t_i)$ of $H_1(\Sigma;\bZ)$   would be equal for distinct commuting generators $t_3,t_5$ of $Br_6$, since the relevant curves $\gamma^{int}$ would have to co-incide to have the commutativity relation $[\bar\rho_1(t_3), \bar\rho_1(t_5)]=1$ hold.  This is not compatible with the homological monodromy representation.  Therefore, the action on $H_1(\Sigma_1^1)$ must be trivial.
\end{proof}

\begin{lem} \label{lem:only_one_14}
There cannot be two or more $(1,4)$-separating reduction curves.
\end{lem}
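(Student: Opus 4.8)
The plan is to show directly, without passing to conjugate generators or to the embedded braid groups, that a transvection in $Sp(10;\bZ)$ cannot carry two disjoint $(1,4)$-separating curves in its essential reduction system: the genus-one side of one such curve forces $f_{v_1}$ to be essentially a Dehn twist there, while the presence of a \emph{second} $(1,4)$-curve forces $f_{v_1}$ to be reducible on the genus-four side, and these two facts are jointly incompatible with Proposition~\ref{prop:characterise_reduction_curves}. So suppose for contradiction that $\sigma(f_{v_1})$ contains two or more $(1,4)$-separating curves; fix one of them, $\gamma$, and write $\Sigma = A \cup_\gamma B'$ with $A \cong \Sigma_1^1$ the genus-one side and $B' \cong \Sigma_4^1$ the genus-four side. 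A second $(1,4)$-separating curve $\gamma' \in \sigma(f_{v_1})$ is disjoint from and non-isotopic to $\gamma$; since a genus-one surface with a single boundary component has no essential separating simple closed curve other than a boundary-parallel one, $\gamma'$ cannot lie in $A$, so $\gamma' \subset B'$. Hence $B'$ contains the reduction curve $\gamma'$ of $f_{v_1}$, so $f_{v_1}|_{B'}$ is reducible --- in particular neither pseudo-Anosov nor periodic on $B'$. By Lemma~\ref{lem:separating_fixed}, $f_{v_1}$ preserves $\gamma$ with its orientation and hence preserves each of $A$ and $B'$.

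Next I would record the homological input. Since $\gamma$ is separating, $H_1(\Sigma;\bQ) = H_1(A;\bQ) \oplus H_1(B';\bQ)$, an $f_{v_1}$-equivariant decomposition with $H_1(A;\bQ) \cong \bQ^2$. By Lemma~\ref{lem:transvection}, $f_{v_1}$ acts on $H_1(\Sigma;\bQ)$ as a transvection, so unipotently with a single nontrivial Jordan block of size two; in particular $f_{v_1}^m$ acts nontrivially for every $m\neq 0$, and no eigenvalue is a nontrivial root of unity. Restricting to $H_1(A;\bQ)$, the image of $f_{v_1}|_A$ in $SL_2(\bZ)$ is unipotent, hence either the identity --- so $f_{v_1}|_A$ is a power of $\tau_{\partial A}$ --- or a nontrivial parabolic --- so $f_{v_1}|_A$ is a power of a Dehn twist in a nonseparating curve of $A$. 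In either case $f_{v_1}|_A$ is not pseudo-Anosov, and, unless it is trivial on $H_1(A;\bQ)$, it is not periodic (a periodic mapping class acts on rational homology with finite order, hence semisimply). Likewise $f_{v_1}|_{B'}$ acts unipotently on $H_1(B';\bQ)$, and it cannot act trivially there: otherwise, a power of $\tau_{\partial A}$ being trivial on $H_1(A;\bQ)$, $f_{v_1}$ would act trivially on all of $H_1(\Sigma;\bQ)$, contradicting that it is a transvection. Acting nontrivially and unipotently, hence non-semisimply, $f_{v_1}|_{B'}$ is therefore not periodic.

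It then remains to run Proposition~\ref{prop:characterise_reduction_curves} for the pair $(f_{v_1},\gamma)$. Case~(1) is excluded, since $f_{v_1}$ is pseudo-Anosov neither on $A$ (a Dehn-twist power) nor on $B'$ (reducible). Cases~(2) and~(3) are excluded because each requires $f_{v_1}$ to be periodic on both $A$ and $B'$, whereas $f_{v_1}|_{B'}$ is not periodic; equivalently, case~(2) would make some nonzero power of $f_{v_1}$ act trivially on $H_1(\Sigma;\bQ)$, which is impossible for a transvection. Hence $\gamma \notin \sigma(f_{v_1})$, contradicting the choice of $\gamma$. Therefore $f_{v_1}$ has at most one $(1,4)$-separating essential reduction curve (and so does every $f_v$, since all are conjugate).

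The only delicate points, which I would take care over, are the two standard translations between Thurston type and the homology action on an invariant subsurface that drive the argument: that a nontrivial unipotent mapping class of $\Sigma_1^1$ is a reducing Dehn-twist power --- its image in $SL_2(\bZ)$ being parabolic rather than hyperbolic or elliptic --- so in particular neither pseudo-Anosov nor periodic; and that a mapping class acting nontrivially and unipotently on the rational first homology of a subsurface cannot be periodic there. Once these are recorded, together with the elementary fact about separating curves in $\Sigma_1^1$ used in the first step, the rest of the proof is purely formal.
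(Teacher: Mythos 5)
Your argument has a genuine gap, and it sits exactly where you locate the ``only delicate points'': in the application of Proposition~\ref{prop:characterise_reduction_curves}. In that criterion the subsurfaces $\Sigma_l,\Sigma_r$ are the pieces of the canonical Thurston decomposition adjacent to $\gamma$ --- pieces on which $f$ is, by construction, either periodic or pseudo-Anosov --- not the two full components of $\Sigma\setminus\gamma$. (This is why the paper only invokes the proposition when $\gamma$ is the \emph{unique} reduction curve, so that the two readings agree.) Your step ``$f_{v_1}|_{B'}$ is reducible, hence neither pseudo-Anosov nor periodic on $B'$, hence cases (1)--(3) all fail'' is therefore not a valid exclusion: when $f_{v_1}|_{B'}$ is reducible one must cut $B'$ further along $\gamma'$ (and the rest of $\sigma(f_{v_1})$) and ask whether $f_{v_1}$ is pseudo-Anosov on the resulting piece adjacent to $\gamma$. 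Your homological input does not rule this out, because a pseudo-Anosov supported on a subsurface can act trivially on $H_1$ (the ``Torelli pseudo-Anosov'' possibility that the paper explicitly has to confront in the very next lemma).

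The gap is not cosmetic: the statement you are actually proving --- that a mapping class acting on $H_1(\Sigma_5;\bZ)$ by a transvection cannot have two disjoint $(1,4)$-separating essential reduction curves --- is false. Take disjoint curves $\gamma_1,\gamma_2$ each splitting off a one-holed torus $T_1,T_2$, with complementary piece $P\cong\Sigma_3^2$, and set $f=\tau_\alpha\circ\psi$ where $\alpha\subset T_1$ is non-separating and $\psi$ is a pseudo-Anosov supported on $P$ acting trivially on homology. Then $f$ acts by the transvection in $[\alpha]$, yet $\sigma(f)=\{\alpha,\gamma_1,\gamma_2\}$ contains both $(1,4)$-curves. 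So no proof ``without passing to the embedded braid groups'' can succeed: the braid relations are essential input. The paper's proof uses them via the centralising $Br_6\to\Gamma(\Sigma_3^2)$ on the complement of the two tori (Castel's Theorem~\ref{thm:castel} forces an $A_5$-chain there, which is incompatible with pseudo-Anosov pieces), a cutting argument in $\Sigma_1^6$ to show one of the two curves is preserved by all of $f_{v_1},\dots,f_{v_7}$, and then a second application of Castel to the resulting $Br_8\to\Gamma(\Sigma_4^1)$. Your correct observations --- that the second curve must lie in $B'$, and that the $SL_2(\bZ)$-image on the torus side is unipotent --- are consistent with the paper's analysis but do not substitute for this machinery.
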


\begin{proof}
Suppose $f_{v_1}$ has two separating reduction curves $\gamma_1^{sep} \sqcup \gamma_2^{sep}$ each splitting off a copy of $\Sigma_1^1$. Then there is an induced representation $Br_6 \to \Gamma(\Sigma_3^2)$ to the complementary surface, which is governed by Theorem \ref{thm:castel}, so defined by an $A_5$-chain $\{a_3,\ldots,a_7\}$ and a centralising element (which must be a boundary multitwist).  

We can now argue as in the previous subsection. The $\gamma_j^{sep}$, $j=1,2$, are preserved by $f_{v_i}$ for $3\leq i\leq 7$, whilst $\sigma(f_{v_i}) \cap \Sigma_3^2 \subset a_i \cup \partial \Sigma_3^2$; so the only possibility for the corresponding $(1,4)$-separating essential reduction curves for $f_{v_i}$ are the same $\gamma_j^{sep}$ as arose for $f_{v_1}$.  Moreover, the $\gamma_j^{sep}$ are preserved by $f_{v_2}$, so are reduction curves for this map, so have trivial geometric interection with its essential reduction curves. Since $[f_{v_2}, f_{v_i}] = 1$ for $4\leq i\leq 7$, the essential reduction curves of $f_{v_2}$ are also disjoint from the $A_4$-chain $a_4 \cup \ldots a_7$. Cutting along $a_5 \cup a_7$ leaves a surface $\Sigma_1^6$, and this cannot contain two inequivalent curves each of which bound  $\Sigma_1^1$-subsurfaces with disjoint interiors. Therefore, at least one of $\gamma_1^{sep}$ and $\gamma_2^{sep}$ must also be one of the essential reduction curves of $f_{v_2}$. 

Relabel so this is $\gamma_1^{sep}$. Then all of $\{f_{v_1},\ldots,f_{v_7}\}$ preserve $\gamma_1^{sep}$, so we get a representation $\bar\rho_4: Br_8 \to \Gamma_4^1$, to which we can apply Theorem \ref{thm:castel}. It follows that the only essential reduction curve of $f_{v_1}$ in $\Sigma_4^1$ is the boundary $\gamma_1^{sep}$ and the supporting curve $a_1$ of the transvection, in particular, there were not two separating $(1,4)$-essential reduction curves. 
\end{proof}

We return to the assumption that there is a (necessarily unique) $(1,4)$-separating essential reduction curve $\gamma^{sep}$ for $f_{v_1}$. 

\begin{lem} $f_{v_1}$ must have a homologically non-trivial essential reduction curve in $\Sigma_4^1$.
\end{lem}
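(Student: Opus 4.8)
The plan is to reduce to a Thurston trichotomy for the restriction $g := f_{v_1}|_{\Sigma_4^1}$ and to eliminate all but the wanted case. Since $\gamma^{sep}$ separates $\Sigma$ into pieces $\Sigma_1^1$ and $\Sigma_4^1$ of distinct genus, $f_{v_1}$ preserves each and the orientation of $\gamma^{sep}$, so it restricts to $g$, whose essential reduction curves are exactly the curves of $\sigma(f_{v_1})$ lying in the interior of $\Sigma_4^1$. Any such curve is automatically homologically non-trivial: if it were separating in $\Sigma_4^1$ it would be separating in $\Sigma$, hence of type $(2,3)$ --- excluded by Corollary~\ref{cor:no_23} --- or of type $(1,4)$, hence equal to the unique such curve $\gamma^{sep}$ by Lemma~\ref{lem:only_one_14}, which does not lie in the interior of $\Sigma_4^1$. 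So it suffices to show $g$ is a reducible mapping class, i.e.\ to rule out that $g$ is pseudo-Anosov, that $g$ is a non-trivial periodic map, and that $g$ is trivial.

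Ruling out the first two alternatives goes as in the preceding lemmas. If $g$ were pseudo-Anosov, the centralising copy $\bar\rho\colon Br_6 \to Z(f_{v_1})$ of Lemma~\ref{lem:Br_8} would restrict to $\bar\rho_4\colon Br_6 \to \Gamma(\Sigma_4^1)$ with image in $Z(g)$, which is virtually cyclic by Lemma~\ref{lem:mccarthy}; but by the previous lemma $\bar\rho_1(Br_6)$ acts trivially on $H_1(\Sigma_1^1)$, so each $f_{v_i}$ ($3\le i\le 7$) has its transvection direction in $H_1(\Sigma_4^1;\bQ)$, the homological action of $Br_6$ on that summand is carried by $\bar\rho_4$, and the transvections in two consecutive (hence braiding) directions pair to $\pm 1$ and generate a copy of $SL_2(\bZ)$ --- not virtually cyclic, a contradiction. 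If $g$ were a non-trivial periodic map, then capping $\partial\Sigma_4^1$ with a disc and coning off the boundary rotation would give a non-trivial finite-order diffeomorphism of the closed genus $4$ surface (non-trivial because a boundary-supported map would force $g$ trivial), which acts on $H_1$ with an eigenvalue that is a non-trivial root of unity; via the $f_{v_1}$-equivariant summand inclusion $H_1(\Sigma_4^1;\bZ)\hookrightarrow H_1(\Sigma;\bZ)$ this eigenvalue appears for $f_{v_1}$, contradicting that $f_{v_1}$ is unipotent (Lemma~\ref{lem:transvection}).

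The hard part will be ruling out that $g$ is trivial. In that case $f_{v_1}$ is supported in $\Sigma_1^1$ together with a collar of $\gamma^{sep}$, so, being a transvection, $f_{v_1}=\tau_c\cdot\tau_{\gamma^{sep}}^{k}$ for a non-separating curve $c\subset\Sigma_1^1$ and some $k$; moreover $k\neq 0$, since otherwise $f_{v_1}=\tau_c$ has a single essential reduction curve and Lemma~\ref{lem:most_of_the_point} contradicts the presence of the separating curve $\gamma^{sep}$ in $\sigma(f_{v_1})$. Thus $\sigma(f_{v_1})=\{c,\gamma^{sep}\}$ with $c$ buried in a genus-one handle. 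To exclude this, I would propagate the structure through the commutation graph of $\Gamma$: by the previous lemma each $f_{v_i}$ ($3\le i\le 7$) restricts on $\Sigma_1^1$ to a power of $\tau_{\gamma^{sep}}$, so has no essential reduction curve in the interior of $\Sigma_1^1$; being conjugate to $f_{v_1}$ (Lemma~\ref{lem:conjugate}) it has an essential reduction system $\{c_i,d_i\}$ with $c_i$ non-separating and $d_i$ of type $(1,4)$, both disjoint from $\gamma^{sep}$ by Lemma~\ref{lem:commute_preserve_sigma}, whence $c_i\subset\Sigma_4^1$ and $d_i$ is either equal to $\gamma^{sep}$ or contained in $\Sigma_4^1$. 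The aim is to force $d_i=\gamma^{sep}$ for enough vertices that, by Lemma~\ref{lem:nonextremal_commute}, $\gamma^{sep}$ is preserved by every non-extremal vertex; the non-extremal vertices would then generate a subgroup acting reducibly on $H_1(\Sigma;\bZ)$, preserving the symplectic splitting attached to $\Sigma=\Sigma_1^1\cup\Sigma_4^1$, contradicting Lemma~\ref{lem:14_full_rank} exactly as in Corollary~\ref{cor:no_23}. The delicate obstacle I expect to absorb the real work is ruling out (or otherwise handling) the possibility that some $f_{v_i}$ has both reduction curves $c_i$ and $d_i$ buried in $\Sigma_4^1$; I anticipate this needs the chain structure of $\Gamma$ more seriously, via the triangle relations and the affine braid relation (Remark~\ref{rmk:triangle} and Lemma~\ref{lem:affine_braid}).
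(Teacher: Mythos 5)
Your reduction and your treatment of the pseudo-Anosov and periodic alternatives are correct and essentially identical to the paper's argument: assuming $\sigma(f_{v_1})\cap\mathrm{int}(\Sigma_4^1)=\emptyset$, the restriction to $\Sigma_4^1$ is irreducible, a non-trivial periodic piece is excluded by eigenvalues, and a pseudo-Anosov piece is excluded because the centralising $Br_6$ of Lemma~\ref{lem:Br_8} would act virtually cyclically on $\Sigma_4^1$ (Lemma~\ref{lem:mccarthy}) while its homological action on $H_1(\Sigma_4^1;\bQ)$ contains braiding transvections. Your observation that any interior reduction curve of $\Sigma_4^1$ is automatically non-separating, via Corollary~\ref{cor:no_23} and Lemma~\ref{lem:only_one_14}, is also the paper's. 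Note that since $f_{v_1}$ cannot have $\gamma^{sep}$ as its unique reduction curve (Lemma~\ref{lem:most_of_the_point}), your ``trivial'' case forces a homologically essential curve $\gamma^{int}\subset\mathrm{int}(\Sigma_1^1)$ and $f_{v_1}=\tau_{\gamma^{int}}\tau_{\gamma^{sep}}^k$ with $k\neq 0$, exactly as you say; this is the residual configuration the paper also isolates.

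The genuine gap is in your plan for that residual case, and the specific route you propose is aimed at the wrong branch. Each $f_{v_i}$ ($3\le i\le 7$) is conjugate to $f_{v_1}$, so $f_{v_i}=\tau_{c_i}\tau_{d_i}^{k}$ with $c_i$ lying in the \emph{genus-one} side of the $(1,4)$-curve $d_i$ (conjugation preserves this relative position). Hence if $d_i=\gamma^{sep}$ then $c_i\subset\mathrm{int}(\Sigma_1^1)$, which is exactly what the preceding lemma on $\bar\rho_1$ rules out. So the branch you hope to force, $d_i=\gamma^{sep}$, is vacuous, and the ``delicate obstacle'' you defer --- all of $c_i\cup d_i$ buried in $\mathrm{int}(\Sigma_4^1)$, with $d_i$ bounding a genus-one subsurface $T_i\subset\Sigma_4^1$ supporting $f_{v_i}$ --- is not an edge case but the entire remaining problem. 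Consequently the appeal to Lemma~\ref{lem:nonextremal_commute} and Lemma~\ref{lem:14_full_rank} never gets off the ground. What is actually needed here is a contradiction from the braid relations among these multitwists (e.g.\ $f_{v_3}$ and $f_{v_4}$ braid, yet are multitwists supported in genus-one pieces bounded by separating curves, and their essential reduction systems must interact as in Lemma~\ref{lem:commute_preserve_sigma} and Lemma~\ref{lem:braid_relations}), in the spirit of the paper's later comparison of the ``shapes'' \eqref{eqn:first_expression} and \eqref{eqn:second_expression} of conjugate multitwists; the paper itself is terse at this point, disposing of the non-trivial (Torelli pseudo-Anosov) alternative explicitly and leaving the multitwist alternative to the analogous conjugacy argument of Lemma~\ref{lem:most_of_the_point}. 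As written, your proposal does not close this case.
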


\begin{proof}
Consider the case in which $\sigma(f_{v_1}) \cap \mathrm{int}(\Sigma_4^1) = \emptyset$. We know that $\gamma^{sep}$ cannot be the unique essential reduction curve for $f_{v_1}$ by Lemma \ref{lem:most_of_the_point}, so there is an essential reduction curve $\gamma^{int}$ in the interior of $\Sigma_1^1$. This must be homologically essential, by Lemma \ref{lem:only_one_14} and Corollary \ref{cor:no_23}. Then $f_{v_1}$ must act on $\Sigma_1^1$ by a non-trivial power of a Dehn twist in $\gamma^{int}$ composed with a power of the twist in $\gamma^{sep}$. The action of $f_{v_1}$ on $\Sigma_4^1$ would have to be a Torelli pseudo-Anosov map, if non-trivial, which would then force $Br_6$ to act on this subsurface virtually cyclically by an argument analogous to that of Lemma \ref{lem:most_of_the_point}. This would be a contradiction.  
\end{proof}

\begin{lem} \label{lem:14_two_cases}
If $f_{v_1}$ has a $(1,4)$-separating essential reduction curve, then this is the only separating essential reduction curve. Furtherore, 
\begin{itemize}
\item either $f_{v_1}$ has a unique non-separating essential reduction curve $\gamma^{int}$, which lies in $\Sigma_4^1$; there is an $A_5$-chain in $\Sigma_3^3 = \Sigma_4^1 \backslash \gamma^{int}$ governing the $f_{v_i}$ for $3\leq i\leq 7$;
\item or $f_{v_1}$ has $2$ non-separating essential reduction curves  $\gamma^{int}_1 \cup \gamma^{int}_2 \subset \Sigma_4^1$ whose union bounds a subsurface $\Sigma_3^2 \subset \Sigma_4^1$ separating off a pair of pants; this subsurface contains an $A_5$-chain governing the $f_{v_i}$ for $3\leq i\leq 7$.
\end{itemize}
\end{lem}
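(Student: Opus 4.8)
The plan is to combine the earlier lemmas of this subsection with a careful analysis of the complementary pieces of $f_{v_1}$ inside $\Sigma_4^1$. The first assertion is immediate: by \cref{lem:only_one_14} there is at most one $(1,4)$-separating essential reduction curve, and by \cref{cor:no_23} there are no $(2,3)$-separating ones, so a $(1,4)$-separating essential reduction curve $\gamma^{sep}$ of $f_{v_1}$ is its only separating essential reduction curve. Write $\Sigma = \Sigma_1^1 \cup_{\gamma^{sep}} \Sigma_4^1$. Each $f_{v_i}$ with $3 \le i \le 7$ commutes with $f_{v_1}$, hence preserves $\sigma(f_{v_1})$ by \cref{lem:commute_preserve_sigma}, hence fixes $\gamma^{sep}$ (the unique separating reduction curve of its type) and preserves $\Sigma_1^1$ and $\Sigma_4^1$; this yields the induced representations $\bar\rho_1 \colon Br_6 \to \Gamma(\Sigma_1^1)$ and $\bar\rho_4 \colon Br_6 \to \Gamma(\Sigma_4^1)$. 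Using the already-established fact that $\bar\rho_1$ is cyclic, generated by a power of $\tau_{\gamma^{sep}}$, the $f_{v_i}$ act trivially on $H_1(\Sigma_1^1)$; so if $\bar\rho_4$ were also cyclic the $f_{v_i}$ would act on $H_1(\Sigma)$ by powers of a single transvection, hence with a common direction, contradicting the pairwise independence of the directions of $\rho(\sigma_{v_i})$, $3 \le i \le 7$ (which follows from the extension of $G(\Gamma) \to Sp(10;\bZ)$ over $Br_8$, as in the proof of \cref{lem:force_cyclic}). Thus $\bar\rho_4$ is non-cyclic.

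Next I would locate the remaining curves of $\sigma(f_{v_1})$, all of which are non-separating. I claim they all lie in the interior of $\Sigma_4^1$: a non-separating curve in the interior of the handle $\Sigma_1^1$ cuts it into a pair of pants, so \cref{prop:characterise_reduction_curves}, together with the absence of pseudo-Anosov or nontrivial periodic maps of a pair of pants, forces $f_{v_1}|_{\Sigma_1^1}$ to be a multitwist acting nontrivially on $H_1(\Sigma_1^1)$; since $f_{v_1}$ also acts nontrivially on $H_1(\Sigma_4^1)$ (by the preceding lemma it has a homologically essential reduction curve there, and once the pseudo-Anosov behaviour excluded below is in hand one sees $f_{v_1}$ must twist nontrivially in that curve), $I - f_{v_1}$ would have rank at least $2$, contradicting \cref{lem:transvection}. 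Hence $\sigma(f_{v_1}) = \{\gamma^{sep}\} \sqcup \{\gamma^{int}_1,\dots,\gamma^{int}_m\}$ with each $\gamma^{int}_j$ a non-separating curve in the interior of $\Sigma_4^1$ and $m \ge 1$.

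Now I would cut $\Sigma_4^1$ along the $\gamma^{int}_j$; on each complementary piece $f_{v_1}$ is pseudo-Anosov, periodic, or a power of a twist along an adjacent $\gamma^{int}_j$ by \cref{prop:characterise_reduction_curves}. A nontrivial periodic piece is ruled out by unipotence of $f_{v_1}$. If $f_{v_1}$ were pseudo-Anosov on a positive-genus piece $S$, then (each $f_{v_i}$, $3 \le i \le 7$, preserving $S$ and commuting with $f_{v_1}|_S$) \cref{lem:mccarthy} makes $Br_6 \to \Gamma(S)$ virtually cyclic, so its action on the symplectic subspace $H_1(S)$ has all transvection elements sharing one direction; combined with the non-cyclicity of $\bar\rho_4$ on the remaining pieces this gives a contradiction with the independence of the $d_i$, in the style of \cref{lem:most_of_the_point} and \cref{lem:force_cyclic}. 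So every positive-genus piece is acted on trivially, the only contribution to $I - f_{v_1}$ comes from twists in the $\gamma^{int}_j$, and the rank-one constraint forces those classes to be pairwise proportional in $H_1(\Sigma)$; pairwise homologous disjoint non-separating curves linearly order, consecutive ones cobounding positive-genus subsurfaces, and an Euler characteristic count gives that these cobounding genera sum to $3$. Since $\bar\rho_4$ is non-cyclic while $f_{v_1}$ is cyclic on every piece, some piece $S$ carries a non-cyclic $Br_6$-action (otherwise $\bar\rho_4(Br_6)$ would be a quotient of a product of abelian groups and commuting boundary twists, hence abelian, hence cyclic since $Br_6^{\mathrm{ab}} = \bZ$); $S$ has genus at most $3$, so \cref{thm:castel} yields a chain $a_3,\dots,a_7$ with $f_{v_i}|_S = \tau_{a_i}^{\varepsilon}\cdot w$, $\varepsilon = +1$, forcing $\mathrm{genus}(S) \ge 2$, and the genus-two case is excluded by the chain-relation argument of \cref{lem:force_cyclic}; so $\mathrm{genus}(S) = 3$. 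Feeding this back through the Euler-characteristic bookkeeping leaves exactly two possibilities: $m = 1$, with $S = \Sigma_3^3 = \Sigma_4^1 \setminus \gamma^{int}_1$; or $m = 2$, with $\gamma^{int}_1 \cup \gamma^{int}_2$ bounding a pair of pants on the $\gamma^{sep}$ side and $S = \Sigma_3^2$ on the other (any other distribution of the cobounding genera either produces an annular piece, whence the curves would be isotopic, or leaves every piece of genus at most $2$, hence cyclic). In both cases the $A_5$-chain $\{a_3,\dots,a_7\} \subset S$ governs the $f_{v_i}$, $3 \le i \le 7$, which is the assertion.

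I expect the middle step---controlling $\sigma(f_{v_1}) \cap \Sigma_4^1$---to be the main obstacle. The delicate points are excluding (Torelli) pseudo-Anosov behaviour of $f_{v_1}$ on positive-genus complementary pieces, which requires balancing \cref{lem:mccarthy} against the non-cyclicity of $\bar\rho_4$ and the pairwise independence of the transvection directions, and then the simultaneous use of the Euler-characteristic bookkeeping and \cref{thm:castel} to bound $m$ by $2$ and to isolate a genus-$3$ piece; the genus-two versus genus-three dichotomy for that piece is dispatched by the chain-relation argument already used in \cref{lem:force_cyclic}.
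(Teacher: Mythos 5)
Your proposal is correct and follows essentially the same route as the paper: both arguments reduce to showing $\bar\rho_4$ is non-cyclic, invoking Theorem \ref{thm:castel} together with the genus-two exclusion from Lemma \ref{lem:force_cyclic} to isolate a genus-$3$ piece carrying the $A_5$-chain, and then using the rank-one (transvection) constraint plus Euler-characteristic bookkeeping to pin down the two configurations. The only difference is organizational --- you first establish that $f_{v_1}$ is a multitwist with proportional twisting classes and then enumerate, whereas the paper splits according to whether $\sigma(f_{v_1})\cap\Sigma_4^1$ separates $\Sigma_4^1$ --- and your treatment of the pseudo-Anosov exclusion is if anything slightly more explicit than the paper's.
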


\begin{proof}
We know $f_{v_1}$ has an essential reduction curve in the interior of $\Sigma_4^1$,  necessarily homologically essential. Let $\gamma_2,\ldots,\gamma_r$ denote the components of $\sigma(f_{v_1}) \cap \Sigma_4^1$. If $\cup_{j=2}^r \, \gamma_j$ does not separate $\Sigma_4^1$, then $Br_6$ acts on the connected surface which is its complement.  This action cannot be cyclic, for homological considerations. The complement must have genus $\geq 2$ by Theorem \ref{thm:castel}, but the genus two case yields a contradiction as in the proof of Lemma \ref{lem:force_cyclic}.   Therefore $r=2$ and there was a unique essential reduction curve in the interior of $\Sigma_4^1$.  Then $Br_6$ acts via a chain of curves on $\Sigma_3^3 = \Sigma_4^1 \backslash \gamma_2$.  For the homological action to be a transvection, rather than a product of transvections with fixed locus of higher codimension, there cannot be a further essential reduction curve in $\Sigma_1^1$. 

Alternatively, $\sigma(f_{v_1}) \cap \Sigma_4^1$ separates.  Note that since the $Br_6$-action on $\Sigma_4^1$ preserves the boundary, it must preserve the component $\Sigma_{adj} \subset \Sigma_4^1 \backslash \sigma(f_v)$ adjacent to the boundary. This component has at least two further boundary components, since the complementary surfaces of the essential reduction curves have Euler characteristic $\leq -1$, and all further boundaries are homologically essential in $\Sigma$.  The Euler characteristic of $\Sigma_4^1 \backslash \Sigma_{adj}$ is $\geq -6$ so this has genus $\leq 3$; this must therefore have genus $3$, as in the previous paragraph.  So $Br_6$ acts via a chain of curves on a genus 3 subsurface.  Since the Euler characteristic of $\Sigma_4^1$ is $-7$ and any subsurface bound by $\sigma(f_{v_1})$ has $\chi \leq -1$, one must have that $r=3$, with a bounding pair of homologous essential reduction curves $\gamma_{\pm}$ splitting $\Sigma_4^1$ into a pair of pants and a copy of $\Sigma_3^2$.  
\end{proof}

\begin{lem}
The second case of Lemma \ref{lem:14_two_cases} cannot occur.
\end{lem}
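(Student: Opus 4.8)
The plan is to show that in the second case $f_{v_1}$ is forced to be a multitwist in its own essential reduction system $\sigma(f_{v_1})=\{\gamma^{sep},\gamma^{int}_1,\gamma^{int}_2\}$, and then to derive a contradiction from the homological monodromy together with the $A_5$-chain governing the commuting generators $f_{v_3},\ldots,f_{v_7}$.

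First I would analyse $f_{v_1}$ near $\gamma^{int}_1$. Since $\gamma^{int}_1\in\sigma(f_{v_1})$ has as its two adjacent reduction pieces the pair of pants $P$ (which supports no pseudo-Anosov) and the surface $\Sigma_3^2$, Proposition~\ref{prop:characterise_reduction_curves} --- applied to a suitable power of $f_{v_1}$, chosen so as to fix $\gamma^{int}_1$ and its orientation --- leaves only two possibilities: either $f_{v_1}$ is pseudo-Anosov on $\Sigma_3^2$, or $f_{v_1}$ is periodic on both $P$ and $\Sigma_3^2$. The first is impossible, because $f_{v_1}|_{\Sigma_3^2}$ commutes with the image of the $A_5$-chain $\langle\tau_{a_3},\ldots,\tau_{a_7}\rangle$ (after absorbing the common centralising factor), and that image contains non-commuting elements of infinite order, whereas the centraliser of a pseudo-Anosov map is virtually cyclic by Lemma~\ref{lem:mccarthy}. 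Hence $f_{v_1}|_{\Sigma_3^2}$ is periodic; by Lemma~\ref{lem:transvection} it has no non-trivial root-of-unity eigenvalue on $H_1$, so it acts trivially on $H_1(\Sigma_3^2;\bQ)$, so it cannot interchange the two boundary components of $\Sigma_3^2$ (which would act by $-1$ on the span of $[\partial]$) and is therefore isotopic to the identity. The same reasoning, applied with the centralising copy of $Br_6$ from Lemma~\ref{lem:Br_8}, shows $f_{v_1}$ is trivial on $\Sigma_1^1$. Consequently $f_{v_1}$ is supported near $\sigma(f_{v_1})$ and fixes each of its three curves, i.e. $f_{v_1}=\tau_{\gamma^{int}_1}^{k}\tau_{\gamma^{int}_2}^{k'}\tau_{\gamma^{sep}}^{k''}$ with $k,k',k''$ all non-zero. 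Being a multitwist, and the topological type of the essential reduction system, are conjugation-invariant, so each $f_{v_i}$ is likewise a multitwist whose essential reduction system is a genus-one separating curve together with a bounding pair of non-separating curves.

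Now I would extract the contradiction. Fix $3\le i\le 7$. Since $[f_{v_1},f_{v_i}]=1$, Lemma~\ref{lem:commute_preserve_sigma} gives $I(\sigma(f_{v_i}),\sigma(f_{v_1}))=0$, so $\sigma(f_{v_i})$ lies in $\Sigma_1^1\sqcup P\sqcup\Sigma_3^2$. From the $A_5$-chain structure of the second case (Lemma~\ref{lem:14_two_cases}) one has $f_{v_i}|_{\Sigma_3^2}=\tau_{a_i}\cdot w_0$ for a common centralising element $w_0$; since $f_{v_i}$ is a multitwist with homological action a transvection, $w_0$ acts trivially on homology and is itself a multitwist supported on the genus-zero surface complementary to the $A_5$-chain together with some of its boundary curves (the hyperelliptic and pseudo-Anosov alternatives for $w_0$ being excluded on homological grounds), whose twist curves therefore have homology classes spanning a subspace of dimension strictly less than five. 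Reading off $a_i\in\sigma(f_{v_i})$, and noting $a_i$ is non-separating, one member $\gamma'_i$ of the bounding pair of $f_{v_i}$ is $a_i$ and the other satisfies $[\gamma'_i]=\pm[a_i]\in H_1(\Sigma_4^1;\bQ)$; as $P$ has no essential non-peripheral curves and a non-separating curve in $\Sigma_1^1$ has homology class in $H_1(\Sigma_1^1;\bQ)$, which meets $H_1(\Sigma_4^1;\bQ)$ trivially, the curve $\gamma'_i$ lies in $\Sigma_3^2$, hence (as $\gamma'_i\ne a_i$) in $\operatorname{supp}(w_0)$. Letting $i$ range over $3,\ldots,7$ and using that $[a_3],\ldots,[a_7]$ are linearly independent (the homological monodromy extends over $Br_8$, as in Lemma~\ref{lem:force_cyclic}), we obtain five curves $\gamma'_3,\ldots,\gamma'_7$ with linearly independent homology classes all lying in $\operatorname{supp}(w_0)$ --- a contradiction.

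I expect the first step --- forcing $f_{v_1}$ to be a multitwist, in particular excluding the pseudo-Anosov behaviour on $\Sigma_3^2$ via Lemma~\ref{lem:mccarthy} and upgrading ``periodic with trivial homological action'' to ``isotopic to the identity'' (which requires controlling how the two boundary components of $\Sigma_3^2$ may be permuted) --- to be the main work; thereafter the homological count is routine.
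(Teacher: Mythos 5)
Your argument is correct and shares its skeleton with the paper's proof: both first force $f_{v_1}$ to be a multitwist $\tau_{\gamma_+}^{u_+}\tau_{\gamma_-}^{u_-}\tau_{\gamma^{sep}}^{u}$ with all exponents non-zero, and both then exploit that each $f_{v_i}$ is conjugate to $f_{v_1}$ (so has a reduction system of the same topological type: a separating curve plus a bounding pair) while restricting on $\Sigma_3^2$ to $\tau_{a_i}\cdot w$. Where you differ is the endgame. The paper compares $f_{v_1}$ with the single element $f_{v_3}$: it pins down $\sigma(f_{v_3})\subset\{a_3,\gamma_+,\gamma_-,\gamma^{sep}\}$ and observes that no multitwist of that shape matches one supported on a separating curve plus a bounding pair, since $a_3$ cannot be completed to a bounding pair by $\gamma_\pm$. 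You instead let the bounding partner $\gamma'_i$ of $a_i$ roam over the support of the centralising element $w$ and run a dimension count over all five generators: five partners with classes $\pm[a_3],\ldots,\pm[a_7]$ cannot fit in the complement of the chain, a four-holed sphere whose disjoint curves span at most a $3$-dimensional subspace of homology. Your route is slightly more robust, since it does not require identifying $\sigma(f_{v_i})$ exactly, at the cost of a longer first step. Two small points: excluding a pseudo-Anosov piece of $w$ on the four-holed-sphere complement is not really done ``on homological grounds'' (such a piece could be Torelli), but it does follow from your own observation that $f_{v_i}$, being conjugate to the multitwist $f_{v_1}$, is itself a multitwist; and the possibility $\gamma'_i\simeq\gamma_\pm$ is most cleanly excluded by noting that $[\gamma_\pm]$ pairs trivially with every $[a_j]$ whereas $[a_i]$ does not, though your dimension count absorbs this case in any event.
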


\begin{proof}
Suppose $\sigma(f_{v_1}) = \{\gamma^{sep}, \gamma_+, \gamma_-\}$ for a bounding pair $\gamma_{\pm} \subset \Sigma_4^1$. Consider  $f_{v_3}$.  It acts on $\Sigma_3^2$ through an $A_5$-chain, so its only possible essential reduction curves in this subsurface are the relevant element $a_3$ of that chain, and the boundary curves $\gamma_{\pm}$. It also has $\gamma^{sep}$ as essential reduction curve, and a priori could have a further reduction curve $\sigma$ inside the complementary $\Sigma_1^1$.  However, the map would then necessarily involve some power of a Dehn twist along $\sigma$, incompatible with the homological action since we have a transvection in $a_3$.  We conclude that $f_{v_3}$ is given by a multitwist
\begin{equation} \label{eqn:first_expression}
f_{v_3} = \tau_{a_1} \circ \tau_{\gamma_+}^{k_+} \circ\tau_{\gamma_-}^{k_-} \circ \tau_{\gamma_{sep}}^k
\end{equation}
for some $k_{\pm} \in \bZ$  and $k\neq 0$.  However, since the essential reduction curves of $f_{v_1}$ are $\gamma^{sep} \cup \gamma_{\pm}$ and it cannot have a pseudo-Anosov component or periodic component for the usual homological reasons,  we know
\begin{equation} \label{eqn:second_expression}
f_{v_1} =  \tau_{\gamma_+}^{u_+} \circ\tau_{\gamma_-}^{u_-} \circ \tau_{\gamma^{sep}}^u 
\end{equation}
with $u, u_+, u_- \neq 0$.  The shapes of \eqref{eqn:first_expression} and \eqref{eqn:second_expression} cannot be made equal by choosing the powers, noting that $\gamma_+ \cup \gamma_-$ bounds but $\gamma_{\pm} \cup a_1$ does not.  This contradicts $f_{v_1}$ and $f_{v_3}$ being conjugate.
\end{proof}

\begin{cor} \label{cor:no_14}
There cannot be a $(1,4)$-separating essential reduction curve.
\end{cor}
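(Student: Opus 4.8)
The plan is to dispose of the single configuration left open by the preceding results. Combining Lemma~\ref{lem:only_one_14}, Corollary~\ref{cor:no_23}, Lemma~\ref{lem:14_two_cases} and the lemma excluding the second case of~\ref{lem:14_two_cases}, we are reduced to the situation in which $f_{v_1}$ has a single separating essential reduction curve $\gamma^{sep}$, of splitting type $(1,4)$, a single non-separating essential reduction curve $\gamma^{int}\subset\Sigma_4^1$, and in which $f_{v_3},\dots,f_{v_7}$ are governed by an $A_5$-chain inside $\Sigma_3^3=\Sigma_4^1\setminus\gamma^{int}$. I would then mimic the strategy of Lemma~\ref{lem:preserved} and Corollary~\ref{cor:no_23}: show that $\gamma^{sep}$ is preserved by \emph{every} non-extremal generator, and exploit the resulting reducibility of the homological monodromy against Lemma~\ref{lem:14_full_rank}.

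First I would pin down $\gamma^{sep}$ as the unique $(1,4)$-separating essential reduction curve of each of $f_{v_1},f_{v_3},\dots,f_{v_7}$. For $f_{v_1}$ this is Lemma~\ref{lem:only_one_14}. For $i\in\{3,\dots,7\}$, the map $f_{v_i}$ is conjugate to $f_{v_1}$, hence has exactly one $(1,4)$-separating essential reduction curve; since $[f_{v_1},f_{v_i}]=1$, Lemma~\ref{lem:commute_preserve_sigma} forces it to be disjoint from $\gamma^{sep}$, so it lies in $\Sigma_1^1$ or in $\Sigma_4^1$. It cannot lie in $\Sigma_1^1$, a once-punctured torus having no essential separating curve; and it cannot lie in the interior of $\Sigma_4^1$, since the only essential reduction curves of $f_{v_i}$ there are the relevant term $a_i$ of the $A_5$-chain and the non-separating curve cut out to form $\Sigma_3^3$, neither of which separates $\Sigma$ of type $(1,4)$ — this is the same Euler-characteristic and cutting bookkeeping used to pass from $Br_6$ to $Br_8$ in the $(2,3)$ subsection. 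Hence the curve must be $\gamma^{sep}$.

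Next I would invoke Lemma~\ref{lem:nonextremal_commute}: any non-extremal vertex $v$ satisfies $[\sigma_v,\sigma_{v_i}]=1$ for some $i\in\{1,3,4,5,6,7\}$, so $f_v$ preserves $\sigma(f_{v_i})$ by Lemma~\ref{lem:commute_preserve_sigma}; as $\gamma^{sep}$ is the only separating curve of that system, $f_v(\gamma^{sep})=\gamma^{sep}$. Since the two sides of $\gamma^{sep}$ have distinct genera $1$ and $4$, $f_v$ also preserves each of $\Sigma_1^1,\Sigma_4^1$, hence preserves the orthogonal symplectic splitting $H_1(\Sigma;\bQ)=V_2\oplus V_8$ with $V_2=H_1(\Sigma_1^1;\bQ)$, $V_8=H_1(\Sigma_4^1;\bQ)$. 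Being a transvection preserving this splitting, $f_v$ has its direction $a_v$ contained in $V_2$ or in $V_8$. Finally, for an edge $(v,w)$ of $\Gamma$ between non-extremal vertices the braid relation $\sigma_v\sigma_w\sigma_v=\sigma_w\sigma_v\sigma_w$ maps to $f_vf_wf_v=f_wf_vf_w$, so $\langle a_v,a_w\rangle=\pm1\neq0$ by the Picard--Lefschetz formula (the intersection computation of Lemma~\ref{lem:14_full_rank}); as $V_2\perp V_8$, this forces $a_v$ and $a_w$ into the same summand. The subgraph of $\Gamma$ spanned by the $14$ non-extremal vertices is connected — $\Gamma$ being the comparability graph of the Boolean lattice on $\{0,1\}^4$, each weight-one vertex is joined to the weight-two vertices above it, these link all weight-one vertices, and each weight-three vertex sits above a weight-two vertex — so all of the $a_v$, $v$ non-extremal, lie in one summand, of dimension at most $8<10$, contradicting Lemma~\ref{lem:14_full_rank}.

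I expect the main obstacle to be the second step: verifying in the $(1,4)$-geometry that no placement of a $(1,4)$-separating essential reduction curve of $f_{v_i}$, other than $\gamma^{sep}$ itself, can be disjoint from both $\gamma^{sep}$ and the $A_5$-chain. This amounts to re-running, in the new subsurfaces $\Sigma_1^1,\Sigma_4^1,\Sigma_3^3$, precisely the Euler-characteristic and cutting arguments already carried out in the $(2,3)$-case, and is where one must be careful; everything after that point is formal given the machinery in place.
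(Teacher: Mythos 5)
Your proposal is correct and follows essentially the same route as the paper: use Lemma \ref{lem:nonextremal_commute} to show every non-extremal generator preserves $\gamma^{sep}$, then use braiding/intersection along edges of $\Gamma$ to force all the transvection directions into a single summand of the induced symplectic splitting of $H_1(\Sigma;\bQ)$, contradicting Lemma \ref{lem:14_full_rank}. The only cosmetic difference is that the paper localizes the directions into $H_1(\Sigma_4^1;\bQ)$ by braiding each non-extremal vertex with one of $f_{v_3},\dots,f_{v_7}$, whereas you invoke connectivity of the subgraph spanned by the non-extremal vertices; both work.
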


\begin{proof}
Suppose such a curve $\gamma^{sep}$ exists.  From the previous analysis, $\gamma^{sep}$ splits $\Sigma = \Sigma_4^1 \cup \Sigma_1^1$, and $\sigma(f_{v_1}) = \{a_1 \cup \gamma^{sep}\}$ for a homologically non-trivial curve $a_1 \subset \Sigma_4^1$. Furthermore, $f_{v_1} = \tau_{a_1} \circ \tau_{\gamma^{sep}}^k$ for some $k\in \bZ$.

We now use Lemma \ref{lem:nonextremal_commute} again. For every $v \neq v_{ext}$,  $f_v$ preserves $\sigma(f_{v_i})$ for some $i \in \{1,3,\ldots,8\}$, and $\sigma(f_v)$ and $\sigma(f_{v_i})$ have trivial geometric intersection number.  In particular, $f_v$ preserves $\gamma^{sep}$, so $\sigma(f_v)$ is disjoint from $\gamma^{sep}$, and the homologically essential component $a_v \subset \sigma(f_v)$ lies wholly inside $\Sigma_4^1$ or $\Sigma_1^1$. Furthermore, for each $v\neq v_{ext}$ other than $v_1$, there is some $j$ with $3 \leq j \leq 7$ with the property that $f_v$ braids with $f_{v_j}$.  The corresponding braiding in the homological monodromy would not be compatible with $a_v \subset \Sigma_1^1$, since $a_j \subset \Sigma_4^1$ for $3\leq j\leq 7$.

It follows that there is a fixed subsurface $\Sigma_4^1 \subset \Sigma$ with the property that for all non-extremal vertices $v$, the map $f_v$ acts by a transvection in a class supported in $\Sigma_4^1$. This contradicts Lemma \ref{lem:14_full_rank}. 
 \end{proof}

\begin{cor}
All essential reduction curves for $f_v$ are non-separating.
\end{cor}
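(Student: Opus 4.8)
The plan is to simply combine Corollary \ref{cor:no_14} and Corollary \ref{cor:no_23} with an elementary observation about separating curves on a genus five surface. The key point is purely topological: a homotopically non-trivial separating simple closed curve $\gamma$ on $\Sigma_5$ cuts the surface into two pieces $\Sigma_a^1$ and $\Sigma_{5-a}^1$, each of which must have positive genus (otherwise $\gamma$ bounds a disc and is inessential, contrary to our standing convention that curves are homotopically non-trivial); since $a + (5-a) = 5$, the unordered pair $\{a, 5-a\}$ is either $\{1,4\}$ or $\{2,3\}$. Thus every separating simple closed curve on $\Sigma_5$ is of $(1,4)$- or $(2,3)$-separating type.

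First I would recall that $\sigma(f_v)$ is, by construction, an admissible system of essential, pairwise non-isotopic simple closed curves. If it contained a separating curve, that curve would be of one of the two types above; but a $(1,4)$-separating essential reduction curve is excluded by Corollary \ref{cor:no_14} and a $(2,3)$-separating one by Corollary \ref{cor:no_23}. (Both corollaries were established with $f_{v_1}$ in mind; since all the $\sigma_v$ are conjugate in $G(\Gamma)$ by Lemma \ref{lem:conjugate}, all the $f_v$ are conjugate in $\Gamma_5$, and conjugation carries essential reduction systems to essential reduction systems while respecting the separating/non-separating dichotomy and the genus splitting type, so the conclusion propagates to every $f_v$.) Hence $\sigma(f_v)$ contains no separating curve at all, i.e. every curve of $\sigma(f_v)$ is non-separating.

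I do not expect any obstacle at this last step: the two genuinely substantive cases have already been dispatched in the preceding two subsections, and what remains is the trivial combinatorial remark that $\{(1,4),(2,3)\}$ exhausts the separating topological types on a genus five surface together with the conjugacy of the $\sigma_v$.
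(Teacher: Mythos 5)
Your proof is correct and is exactly the paper's argument: the paper's own proof is the one-line ``Immediate from Corollary \ref{cor:no_23} and \ref{cor:no_14},'' and you have simply made explicit the two routine points (that $\{1,4\}$ and $\{2,3\}$ exhaust the separating types on $\Sigma_5$, and that conjugacy of the $f_v$ propagates the conclusion from $f_{v_1}$ to all vertices) that the paper leaves implicit.
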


\begin{proof} Immediate from Corollary \ref{cor:no_23} and \ref{cor:no_14}.
\end{proof}

\subsection{All essential reduction curves are non-separating}

\begin{cor} If $G(\Gamma) \to Sp(10;\bZ)$ lifts to $\Gamma_5$, then necessarily each vertex generator is sent to a Dehn twist in a non-separating simple closed curve.
\end{cor}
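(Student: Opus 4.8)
The plan is to prove that $\sigma(f_v)$ consists of a single simple closed curve; by the preceding corollary that curve is then non-separating, and Lemma~\ref{lem:most_of_the_point} identifies $f_v$ with a positive Dehn twist along it. Since the $\sigma_v$ are all conjugate in $G(\Gamma)$ it suffices to treat $f_{v_1}$, and I argue by contradiction, assuming $\sigma(f_{v_1}) = \{\gamma_1,\dots,\gamma_r\}$ with $r\geq 2$ (all curves non-separating, by the preceding corollary).

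The first step is to show that $f_{v_1}$ has no non-trivial periodic component and no pseudo-Anosov component, so that $f_{v_1}$ is a multitwist $\prod_j \tau_{\gamma_j}^{k_j}$ with all $k_j\neq 0$ and $\sigma(f_{v_1})$ exactly its twisting curves. A non-trivial periodic piece is excluded by the eigenvalue obstruction used throughout (it contributes a root-of-unity eigenvalue to the action of $f_{v_1}$ on $H_1(\Sigma;\bZ)$, contradicting Lemma~\ref{lem:transvection}). For a pseudo-Anosov piece: the centralising copy $Br_6=\langle f_{v_i}:3\leq i\leq 7\rangle$ of Lemma~\ref{lem:Br_8} preserves $\sigma(f_{v_1})$ and permutes its complementary pieces, a finite-index subgroup fixing each of them; invoking Lemma~\ref{lem:mccarthy} on a pseudo-Anosov piece together with the spreading argument of Lemma~\ref{lem:most_of_the_point} on the remaining pieces forces $Br_6\to\Gamma_5$, hence $Br_6\to Sp(10;\bZ)$, to have cyclic image — impossible, since that image contains two consecutive braiding transvections and so a copy of $SL(2;\bZ)$.

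Now cut $\Sigma$ along $\gamma_1\cup\cdots\cup\gamma_r$. Since the dual graph of a non-separating multicurve on $\Sigma_5$ is connected and bridgeless, a short count bounds the genus of every complementary piece by $3$, so the $\sigma(f_{v_1})$-preserving action of $Br_6$ on each piece lies within the hypotheses $g\leq n/2$ of Castel's Theorem~\ref{thm:castel}. This action cannot be cyclic on every piece — the transvection directions $[a_3],\dots,[a_7]\in\bZ^{10}$ are linearly independent, as in the proof of Lemma~\ref{lem:force_cyclic} — so on some piece $P$ it is governed by an $A_5$-chain $a_3,\dots,a_7$ with $f_{v_i}|_P=\tau_{a_i}\cdot w$ ($w$ a boundary multitwist, the sign being $+1$ since a transvection is not conjugate to its inverse; two distinct non-cyclic pieces would make the homological $f_{v_i}$ a product of two transvections). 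On each remaining (cyclic) piece $\bar\rho(t_i)$ is a common power of a fixed mapping class (the $t_i$ being conjugate in $Br_6$), which must be trivial: it cannot be a fixed non-trivial multitwist — else some fixed curve would lie in $\sigma(f_{v_i})$ for every $i$, yet $f_{v_i}$ is a transvection in the direction $[a_i]$, which varies with $i$ — and it cannot be pseudo-Anosov, by the first step; likewise $w$ is trivial, its curves being components of $\partial P$ and hence not homologous to $[a_i]$. The one genuinely delicate point is to exclude, at the end, a fixed multitwist $N$ (necessarily Torelli, supported off the chain) with $f_{v_i}=\tau_{a_i}N$ for all $i$; I would handle this using the affine braid structure (Lemma~\ref{lem:affine_braid}) together with Lemmas~\ref{lem:nonextremal_commute} and~\ref{lem:14_full_rank} to pin down the global configuration — a fixed $N$ forces all non-extremal $f_v$ to share a reduction-curve pattern incompatible with the $14$ non-extremal vanishing cycles spanning $\bZ^{10}$. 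Once $N$ is excluded, $\sigma(f_{v_i})=\{a_i\}$, so $|\sigma(f_{v_i})|=1$, contradicting $|\sigma(f_{v_1})|=r\geq 2$ and the conjugacy-invariance of the cardinality of the essential reduction system; hence $r=1$ and Lemma~\ref{lem:most_of_the_point} completes the proof.

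I expect the main obstacle to be precisely this last elimination of the Torelli tail $N$: unlike the separating cases (Corollaries~\ref{cor:no_23} and~\ref{cor:no_14}), cutting along the non-separating $\gamma_j$ does not split $H_1(\Sigma;\bQ)$ into symplectic blocks, so the transvection-fixed-hyperplane device is unavailable and one must instead combine Castel's dichotomy on the complementary pieces with the $G(\Gamma)$-relations (chain, affine, and the vertex commutation pattern) to force the contradiction. A secondary subtlety worth flagging is that the transvection condition on $f_{v_1}=\prod_j\tau_{\gamma_j}^{k_j}$ does not by itself force the classes $[\gamma_j]$ to be proportional — lantern-type relations produce multitwists in non-homologous curves acting as transvections — so the argument genuinely needs the chain structure supplied by Theorem~\ref{thm:castel}, not merely homological cancellation.
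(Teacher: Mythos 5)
Your skeleton is recognisably the same as the paper's --- show $f_{v_1}$ is a multitwist, locate an $A_5$-chain for the centralising $Br_6$ via Theorem \ref{thm:castel}, and then derive a contradiction with Lemma \ref{lem:14_full_rank} from a collection of curves preserved by every non-extremal generator --- and the mechanism you name for the endgame is the right one. But two load-bearing steps are not actually carried out. The first is the piecewise application of Castel: to get a homomorphism $Br_6\to\Gamma(P)$ for each complementary piece $P$ of $\sigma(f_{v_1})$ you need the \emph{full} group $Br_6$ to preserve $P$, and Theorem \ref{thm:castel} says nothing about finite-index subgroups, so your parenthetical ``a finite-index subgroup fixing each of them'' does not suffice. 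Lemma \ref{lem:commute_preserve_sigma} only gives that $Br_6$ permutes the pieces; the device used in Lemma \ref{lem:separating_fixed} to kill such permutations (a transvection cannot permute the blocks of a symplectic splitting) is unavailable here precisely because, as you note yourself in your closing paragraph, cutting along a non-separating multicurve does not split $H_1(\Sigma;\bQ)$ into symplectic blocks; and the number of pieces can a priori reach $8$, so Artin's $Br_n\to\mathrm{Sym}_k$ result does not apply either. The paper sidesteps all of this by first analysing the $f_{v_1}$-orbits inside $\sigma(f_{v_1})$: a cyclic permutation of disjoint homologically essential curves spanning a subspace of rank $>1$ would put a nontrivial root of unity in the spectrum of $f_{v_1}$, so each orbit is either a single invariant curve or a bounding pair; one then cuts along one orbit at a time (each case giving genus-$\le 3$ complements where Castel applies) and iterates. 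Some version of this orbit analysis is needed before your ``cut along everything at once'' step is legitimate.

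The second gap is the one you flag yourself: the exclusion of the residual Torelli multitwist $N$ is a statement of intent rather than an argument. It should be written out, and it is short once set up: if $N\neq 1$ then every curve $\gamma$ of $N$ lies in $\sigma(f_{v_i})$ for all $i\in\{1,3,\ldots,7\}$ (the essential reduction system of a multitwist is its twisting set), so by Lemma \ref{lem:nonextremal_commute} and Lemma \ref{lem:commute_preserve_sigma} every non-extremal $f_v$ preserves $\gamma$ and has $\sigma(f_v)$ disjoint from it; since $\gamma$ is non-separating, $[\gamma]\neq 0$, and all $14$ non-extremal transvection directions are forced into the rank-$9$ sublattice $[\gamma]^{\perp}$, contradicting Lemma \ref{lem:14_full_rank}. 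As it stands, then, you have correctly identified where the difficulty lies and which lemmas resolve it, but the proof is incomplete at exactly those points.
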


\begin{proof}
 Fix one essential reduction curve $\gamma_1$ of $f_{v_1}$ and consider its $f_{v_1}$-orbit $\{\gamma_1,\ldots,\gamma_r\} \subset \sigma(f_{v_1})$, on which $f_{v_1}$ acts by a cyclic permutation.  If these curves span a subspace of rank $>1$ in $H_1(\Sigma;\bQ)$ then $f_v$ must have a non-trivial root of unity in its spectrum, a contradiction. On the other hand, three distinct pairwise disjoint homologically essential curves necessarily span at least a two-dimensional subspace of homology.  We conclude that either $r=2$ and $\gamma_1 \cup \gamma_2$ separates $\Sigma$, or the essential reduction curve $\gamma_1$ is preserved by $f_{v_1}$.

In the first case, we have an invariant pair $\gamma_1\cup \gamma_2$ of essential reduction curves which separate $\Sigma$ into subsurfaces $\Sigma_a^2 \cup \Sigma_{4-a}^2$ with $a\geq 1$, so both subsurfaces have genus $\leq 3$ and are bound by Theorem \ref{thm:castel}. (If $a=2$, although the subsurfaces are homeomorphic, they cannot be permuted by $f_v$ for the usual reason that then the homology action of the map would permute non-trivial symplectic subspaces of $H_1(\Sigma;\bQ)$, contradicting the fact that the fixed hyperplane meets each positive-dimensional symplectic subspace non-trivially.)  Arguing as in previous cases, we again find that the generators of the $Br_6$-action act by multitwists in this case, with a positive twist along a homologically non-trivial curve, perhaps composed with  separating Dehn twists or their inverses in boundary components.  As in the proof of Corollary \ref{cor:no_14}, we have $\gamma_1 \cup \gamma_2 \subset \sigma(f_{v_j})$ for $3\leq j\leq 7$, and the preservation of these curves by $f_v$ for all non-extremal $v$ yields a contradiction to the homological monodromy representation.

In the second case, $f_v$ acts on $\Sigma \backslash \gamma_1 = \Sigma_4^2$.  If $\gamma_1$ was the unique essential reduction curve, we are done by Lemma \ref{lem:most_of_the_point}. If not, we iterate the argument: another reduction curve is either preserved setwise, or permuted with a bounding sibling, and we can cut to reduce the geometric complexity of the surface, and find a set of curves preserved by the $Br_6$ action and hence by all maps commuting with that subgroup, violating Lemma \ref{lem:14_full_rank}. 
\end{proof}

\section{Impossible curve configurations}

Suppose the homological monodromy $G(\Gamma) \to Sp(10;\bZ)$ factors through $\Gamma_5$. Then each $f_v$ is given by the positive Dehn twist in a homologically non-trivial simple closed curve $\gamma_v \subset \Sigma_5$. Since curve configurations can be `pulled tight', we can assume that each pair of curves $\gamma_v,\gamma_{v'}$ meet in exactly their geometric intersection number $I(\gamma_v, \gamma_{v'})$ of points.  The geometric intersection pattern of these simple closed curves is therefore constrained by Lemma \ref{lem:braid_relations}  as follows:
\begin{enumerate}
\item if $v,v'$ do not belong to an edge of $\Gamma$, the corresponding curves are disjoint;
\item if $v,v'$ do belong to an edge of $\Gamma$, the corresponding curves meet in one point.
\end{enumerate}

In particular, the seven closed curves defining the morphism $Br_8 \to G(\Gamma)$ from Lemma \ref{lem:Br_8} yield an $A_7$-chain in $\Sigma_5$, which moreover extends to an $A_8$-loop corresponding to the $Br_9^{aff}$-extension of Lemma \ref{lem:affine_braid}.  Chains of simple closed curves are unique up to diffeomorphism after specifying that their homology classes are linearly independent. More precisely,  the subsurface neighbourhood of an $A_7$-chain is  a $\Sigma_3^2$,  but there are 3 different models for the inclusion $\Sigma_3^2 \subset \Sigma_5$:
\begin{itemize}
\item the two boundary components may separate off a copy of $\Sigma_1^2$;
\item each boundary component may separate off a copy of $\Sigma_1^1$;
\item one boundary component may separate a disc, and the other a  $\Sigma_2^1$.
\end{itemize}
The first situation is distinguished by the $7$ curves of the chain being homologically linearly independent, so $a_1+a_3+a_5+a_7 \neq 0 \in H_1(\Sigma_5;\bZ)$. 

\begin{lem}
The curves $a_j$ defining the $A_7$-chain satisfy $a_1+a_3+a_5+a_7 \neq 0$.
\end{lem}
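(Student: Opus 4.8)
The plan is to identify the classes $a_j\in H_1(\Sigma_5;\bZ)$ with vanishing cycles of the Fermat singularity and then reduce the non-vanishing of $a_1+a_3+a_5+a_7$ to a short check against the Hefez--Lazzeri intersection form of Lemma \ref{lem:14_full_rank}.

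First I would fix the data. By the corollary just proved, under the hypothetical lift $\rho$ each $\sigma_v$ goes to the positive Dehn twist $\tau_{\gamma_v}$ in a non-separating curve, and the $A_7$-chain $a_1,\dots,a_7$ is $\gamma_{v_1},\dots,\gamma_{v_7}$ for the vertex sequence of Lemma \ref{lem:Br_8}, so the odd-indexed curves correspond to $v_1=(0001)$, $v_3=(0100)$, $v_5=(0010)$, $v_7=(1000)$. Fix a symplectic identification $H_1(\Sigma_5;\bZ)\cong H_3(X;\bZ)$ under which $\Gamma_5\to Sp(10;\bZ)$ carries the lifted representation to the cubic monodromy. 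Then $\sigma_{v_i}$ acts simultaneously as the Picard--Lefschetz transvection in $a_i=[\gamma_{v_i}]$ and in the vanishing cycle $v_{v_i}$; since a transvection is not conjugate to its inverse (property~(1)), these have the same direction up to sign, so $a_i=\eta_i\,v_{v_i}$ with $\eta_i\in\{\pm1\}$. Orienting the chain curves so that $\langle a_i,a_{i+1}\rangle=+1$ for $1\le i\le 6$ forces $\eta_i\eta_{i+1}=\langle v_{v_i},v_{v_{i+1}}\rangle$, the right-hand side being the signed Hefez--Lazzeri pairing.

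Next I would pair $a_1+a_3+a_5+a_7=\eta_1 v_{(0001)}+\eta_3 v_{(0100)}+\eta_5 v_{(0010)}+\eta_7 v_{(1000)}$ against the single vanishing cycle $v_{(0011)}$. Two terms die: $v_{(0100)}$ and $v_{(1000)}$ are orthogonal to $v_{(0011)}$, since in each case a pair of coordinates takes opposite values and the obstruction in the Hefez--Lazzeri rule applies; whereas $\langle v_{(0001)},v_{(0011)}\rangle=\langle v_{(0010)},v_{(0011)}\rangle=1$ because only one coordinate changes. Hence the pairing is $\eta_1+\eta_5$. Telescoping, $\eta_1\eta_5=\prod_{i=1}^{4}\langle v_{v_i},v_{v_{i+1}}\rangle$, and the four relevant entries are $\langle v_{(0001)},v_{(0101)}\rangle=+1$, $\langle v_{(0101)},v_{(0100)}\rangle=-1$, $\langle v_{(0100)},v_{(0110)}\rangle=+1$, $\langle v_{(0110)},v_{(0010)}\rangle=-1$, with product $+1$. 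Thus $\eta_1=\eta_5$, the pairing equals $\pm2\neq 0$, and so $a_1+a_3+a_5+a_7\neq 0$; equivalently, the $A_7$-chain sits in $\Sigma_5$ in the first of the three listed models.

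The step I expect to demand the most care is the sign bookkeeping: whether ``$a_1+a_3+a_5+a_7=0$'' holds depends on the chosen orientations of the $\gamma_{v_i}$, so one must verify that the orientations normalising the chain intersection numbers are the ones implicit in the statement and that the $\eta_i$ are consistently extracted from the signed Hefez--Lazzeri pairings. Routing everything through the one pairing with $v_{(0011)}$ sidesteps controlling all the $\eta_i$ individually, leaving only the handful of explicit $\pm1$ computations above, which are routine given Lemma \ref{lem:14_full_rank}.
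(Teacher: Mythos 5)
Your argument is correct, and its computations check out against the Hefez--Lazzeri rule quoted in Lemma \ref{lem:14_full_rank}: for a lexicographically ordered comparable pair at Hamming distance one the pairing is $+1$, which gives exactly the values you list, the telescoping product along the chain does yield $\eta_1=\eta_5$, and so the pairing of $a_1+a_3+a_5+a_7$ with $v_{(0011)}$ is $\pm 2\neq 0$. The paper's proof is the same in spirit --- pair the sum against the vanishing cycle of a well-chosen auxiliary vertex --- but it uses $(0111)$ rather than $(0011)$. That vertex bounds an edge with $v_1,v_3,v_5$ but not $v_7$, so it pairs nontrivially with an \emph{odd} number of the four classes; the sum is then already nonzero modulo $2$, and every sign (the orientations of the $\gamma_{v_i}$, the signs in the Hefez--Lazzeri form, and the ambiguity $a_i=\pm v_{v_i}$) becomes irrelevant. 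Your test class meets an even number of the four classes ($a_1$ and $a_5$ only), which is precisely why you are forced into the sign bookkeeping you flag at the end. You carry that bookkeeping out correctly, but the conclusion is then tied to the normalisation $\langle a_i,a_{i+1}\rangle=+1$: your computation rules out the relations $\epsilon_1a_1+\epsilon_3a_3+\epsilon_5a_5+\epsilon_7a_7=0$ with $\epsilon_1=\epsilon_5$ but not those with $\epsilon_1=-\epsilon_5$. This happens to suffice, because in that normalisation the boundary class of the chain neighbourhood --- the only combination that could actually vanish --- is $a_1+a_3+a_5+a_7$ itself (one checks $\langle a_1+a_3+a_5+a_7,a_{2j}\rangle=1-1=0$ for each $j$), but you should say so explicitly; the mod $2$ route kills all signed combinations at once and avoids the issue entirely.
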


\begin{proof}
In the lexicographic notation for the vertices $\{0,1\}^4$, the chain of vertices $v_1,\ldots,v_7$ defining the braid group $Br_8$ is given by
\[
(0001) - (0101) - (0100) - (0110) - (0010) - (1010) - (1000) \qquad \simeq \qquad v_1 - v_2 - \cdots - v_7.
\]
Consider the non-extremal vertex $\hat{v}$ associated to $(0111)$. The associated symplectic matrix $A_{\hat{v}}$ is a transvection whose defining class $a_{\hat{v}}$ in $\bZ^{10}$ has intersection number $\pm 1$ with $a_i$ for $1\leq i\leq 5$, since it bounds an edge in the Artin graph $\Gamma$ with those vertices and the corresponding transvections should braid, but intersection number zero with $a_6,a_7$, since there are no edges between $(10\bullet\bullet)$ and $(01\bullet\bullet)$ for any values $\bullet$.  Therefore $\langle a_{\hat{v}}, (a_1+a_3+a_5+a_7) \rangle \neq 0 $ modulo $2$.
\end{proof}

The $A_7$-chain therefore looks as in Figure \ref{Fig:A7_chain}, where we have relabelled the supporting curves $\{a,b,\ldots,g\}$. Thus,
\[
\xymatrix{
(0001) \ar@{-}[r] \ar@{-}[d] & (0101) \ar@{-}[r] \ar@{-}[d] & (0100) \ar@{-}[r] \ar@{-}[d] & (0110) \ar@{-}[r] \ar@{-}[d] & (0010) \ar@{-}[r] \ar@{-}[d] & (1010) \ar@{-}[r] \ar@{-}[d] & (1000) \ar@{-}[d] \\
a  \ar@{-}[r] & b \ar@{-}[r]  & c \ar@{-}[r] & d \ar@{-}[r] &e  \ar@{-}[r] & f \ar@{-}[r] & g
}
\]  
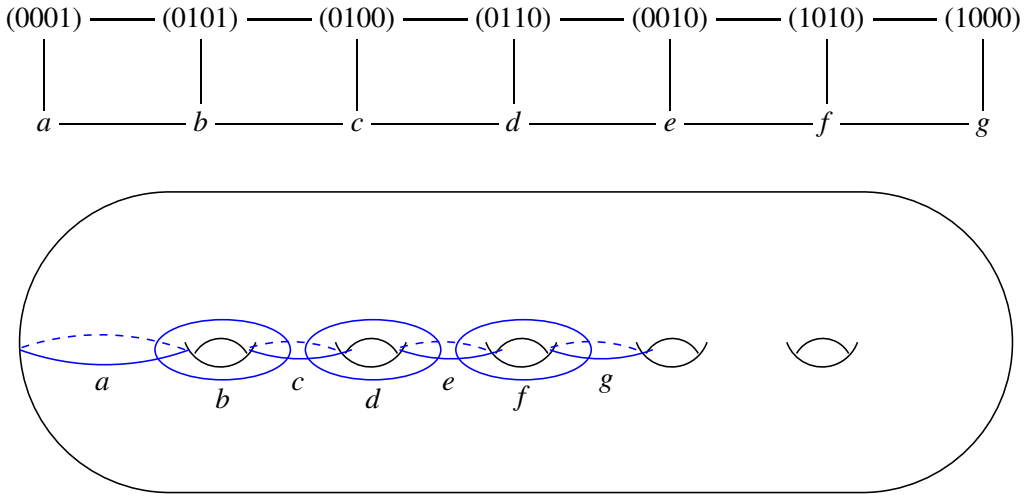
\begin{figure}[ht]
\begin{center} 
\begin{tikzpicture}

\draw[semithick] (-4,0) arc (200:340:0.5);
\draw[semithick] (-2,0) arc (200:340:0.5);
\draw[semithick] (0,0) arc (200:340:0.5);
\draw[semithick] (2,0) arc (200:340:0.5);
\draw[semithick] (4,0) arc (200:340:0.5);

\draw[semithick] (-3.87,-0.15) arc (150:30:0.4);
\draw[semithick] (-1.87,-0.15) arc (150:30:0.4);
\draw[semithick] (0.13,-0.15) arc (150:30:0.4);
\draw[semithick] (2.13,-0.15) arc (150:30:0.4);
\draw[semithick] (4.13,-0.15) arc (150:30:0.4);


\draw[semithick] (5,2) arc (90:-90:2);
\draw[semithick] (-4.2,2) arc (90:270:2);
\draw[semithick] (5,2) -- (-4.2,2);
\draw[semithick] (5,-2) -- (-4.2,-2);


\draw[semithick,blue] (-6.2,-0.1) arc (250:290:3.3);
\draw[semithick,blue,dashed] (-4,-0.1) arc (70:110:3.3);

\draw[semithick,blue] (-3.15,-0.1) arc (250:290:2);
\draw[semithick,blue,dashed] (-2,-0.1) arc (70:110:1.8);

\draw[semithick,blue] (-1.15,-0.1) arc (250:290:2);
\draw[semithick,blue,dashed] (0,-0.1) arc (70:110:1.8);
\draw[semithick,blue] (0.85,-0.1) arc (250:290:2);
\draw[semithick,blue,dashed] (2,-0.1) arc (70:110:1.8);

\draw[semithick, blue] (-3.5,-0.1) ellipse (0.9cm and 0.4cm);
\draw[semithick, blue] (-1.5,-0.1) ellipse (0.9cm and 0.4cm);
\draw[semithick, blue] (0.5,-0.1) ellipse (0.9cm and 0.4cm);

\draw (-5.1,-0.55) node {$a$};
\draw (-3.5,-0.75) node {$b$};
\draw (-2.5,-0.55) node {$c$};
\draw (-1.5,-0.75) node {$d$};
\draw (-.5,-0.55) node {$e$};
\draw (.5,-0.75) node {$f$};
\draw (1.6,-0.55) node {$g$};

\end{tikzpicture}
\end{center}
\caption{An $A_7$-chain of curves\label{Fig:A7_chain}}
\end{figure}

After cutting along the curves labelled $a,c,e,g$ one obtains a torus with 8 boundary components. The remaining curves of the configuration $\Gamma$ can now be drawn on this bordered surface $\Sigma_1^8$.   In particular, if we label by $h$ the generator which comes from the vertex $(1001)$ and which extends the $A_7$-chain to a cycle of 8 curves, as defining the $Br_9^{aff}$-representation of Lemma \ref{lem:affine_braid}, then this looks as in Figure \ref{Fig:A7_chain_and_cut}, where either of the subsurfaces into which $h$ divides $\Sigma_1^8$ could contain the additional handle. 
\begin{figure}[ht]
\begin{center} 
\begin{tikzpicture}[scale=0.7]

\draw[semithick,blue] (-1.2,2) ellipse (0.7cm and 0.2cm);
\draw[semithick,blue] (1.2,2) ellipse (.7cm and 0.2cm);
\draw[semithick,blue, rotate around={-50:(2,1)}] (3,1.8) ellipse (.7cm and 0.2cm);
\draw[semithick,blue, rotate around={50:(-2,1)}] (-3,1.8) ellipse (.7cm and 0.2cm);
\draw[semithick,blue, rotate around={90:(4.3, -1.5)}] (4.3,-1.5) ellipse (.7cm and 0.2cm);
\draw[semithick,blue, rotate around={90:(-4.3, -1.5)}] (-4.3,-1.5) ellipse (.7cm and 0.2cm);
\draw[semithick,blue, rotate around={-50:(-2,-3.8)}] (-3,-4.6) ellipse (.7cm and 0.2cm);
\draw[semithick,blue, rotate around={50:(2,-3.8)}] (3,-4.6) ellipse (.7cm and 0.2cm);

\draw[semithick] (-0.5,-3.5) arc (200:340:0.5);
\draw[semithick] (-0.37,-3.66) arc (150:30:0.4);

\draw[semithick] (-0.5,2) -- (0.5,2);
\draw[semithick] (-2.8,-4.05) arc (230:310:4.33);

\draw[semithick] (1.9,2) arc (200:265:1.1);
\draw[semithick] (-1.9,2) arc (340:275:1.1);
\draw[semithick] (3.7,0.2) -- (4.3,-0.8);
\draw[semithick] (-3.7,0.2) -- (-4.3,-0.8);
\draw[semithick] (4.3,-2.2) -- (3.7,-3);
\draw[semithick] (-4.3,-2.2) -- (-3.7,-3);

\draw[semithick, blue] (1.7,1.88) arc (200:267:1.25);
\draw[semithick,blue] (-1.7,1.88) arc (340:272:1.25);
\draw[semithick,blue] (3.3,0.4) arc (190:230:2.5);
\draw[semithick,blue] (-3.3,0.4) arc (350:310:2.5);
\draw[semithick,blue] (4.15,-1.9) arc (100:193:1.1);
\draw[semithick,blue] (-4.15,-1.9) arc (80:-13:1.1);

\draw[semithick,blue] (1.1,1.8) arc (150:243:4.13); 
\draw[semithick,blue] (-1.1,1.8) arc (390:297:4.13);
\draw[blue] (0.9, -1) node {$h$};
\draw[blue] (-0.9, -1) node {$h$};


\draw (-1.2,2.5) node {$g$}; 
\draw (1.2,2.5) node {$g$};
\draw  (2.15,0.9) node {$f$};
\draw  (-2.15,0.9) node {$f$};
\draw (3.75,0.85) node {$e$};
\draw (-3.75,0.85) node {$e$};
\draw (3.2,-0.45) node {$d$};
\draw (-3.2,-0.45) node {$d$};
\draw (4.8,-1.5) node {$c$};
\draw (-4.8,-1.5) node {$c$};
\draw (3.2, -2.3) node {$b$};
\draw (-3.15, -2.3) node {$b$};
\draw (3.5,-4.0) node {$a$};
\draw (-3.5,-4.0) node {$a$};

\end{tikzpicture}
\end{center}
\caption{The affine braid group generators after $\Sigma$ has been cut open along the $A_7$-chain (the handle may be on the other side of $h$)\label{Fig:A7_chain_and_cut}}
\end{figure}
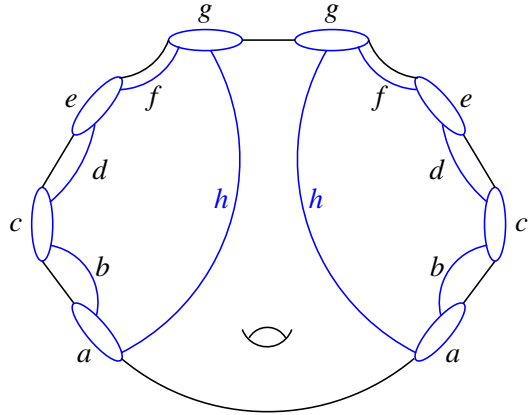

We now consider the two pairs of curves 
\[
\xymatrix{
& (0011)  \ar@{-}[d] &  & &  (1100)  \ar@{-}[d]  &    \\ 
a \ar@{-}[r] &u \ar@{-}[r]  & e & c \ar@{-}[r] &  v \ar@{-}[r] & g 
}
\]
i.e. $u, v$ correspond to the vertices $(0011)$ respectively $(1100)$, and
\begin{itemize}
\item $u$ meets $\{a,e\}$ but is disjoint from $\{b,c,d,f,g,h,v\}$;
\item $v$ meets $\{c,g\}$ but is disjoint from $\{a,b,d,e,f,h,u\}$.
\end{itemize}
Since $u$ meets $a$, it meets it in precisely one of the two intervals into which $a$ has already been divided by the intersections of $a$ with $b$ and $h$.  These two options are shown in Figure \ref{Fig:affine_generator}. 
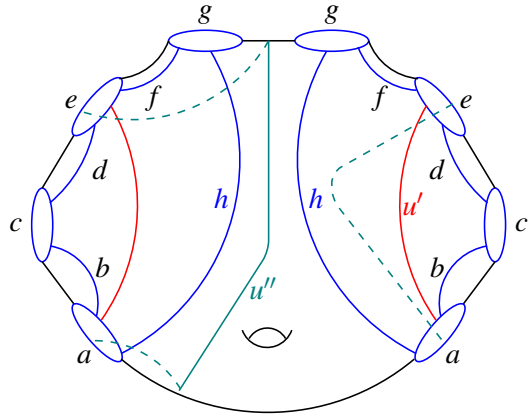
\begin{figure}[ht]
\begin{center} 
\begin{tikzpicture}[scale=0.7]

\draw[semithick,blue] (-1.2,2) ellipse (0.7cm and 0.2cm);
\draw[semithick,blue] (1.2,2) ellipse (.7cm and 0.2cm);
\draw[semithick,blue, rotate around={-50:(2,1)}] (3,1.8) ellipse (.7cm and 0.2cm);
\draw[semithick,blue, rotate around={50:(-2,1)}] (-3,1.8) ellipse (.7cm and 0.2cm);
\draw[semithick,blue, rotate around={90:(4.3, -1.5)}] (4.3,-1.5) ellipse (.7cm and 0.2cm);
\draw[semithick,blue, rotate around={90:(-4.3, -1.5)}] (-4.3,-1.5) ellipse (.7cm and 0.2cm);
\draw[semithick,blue, rotate around={-50:(-2,-3.8)}] (-3,-4.6) ellipse (.7cm and 0.2cm);
\draw[semithick,blue, rotate around={50:(2,-3.8)}] (3,-4.6) ellipse (.7cm and 0.2cm);

\draw[semithick] (-0.5,-3.5) arc (200:340:0.5);
\draw[semithick] (-0.37,-3.66) arc (150:30:0.4);

\draw[semithick] (-0.5,2) -- (0.5,2);
\draw[semithick] (-2.8,-4.05) arc (230:310:4.33);

\draw[semithick] (1.9,2) arc (200:265:1.1);
\draw[semithick] (-1.9,2) arc (340:275:1.1);
\draw[semithick] (3.7,0.2) -- (4.3,-0.8);
\draw[semithick] (-3.7,0.2) -- (-4.3,-0.8);
\draw[semithick] (4.3,-2.2) -- (3.7,-3);
\draw[semithick] (-4.3,-2.2) -- (-3.7,-3);

\draw[semithick, blue] (1.7,1.88) arc (200:267:1.25);
\draw[semithick,blue] (-1.7,1.88) arc (340:272:1.25);
\draw[semithick,blue] (3.3,0.4) arc (190:230:2.5);
\draw[semithick,blue] (-3.3,0.4) arc (350:310:2.5);
\draw[semithick,blue] (4.15,-1.9) arc (100:193:1.1);
\draw[semithick,blue] (-4.15,-1.9) arc (80:-13:1.1);

\draw[semithick,blue] (1.1,1.8) arc (150:243:4.13); 
\draw[semithick,blue] (-1.1,1.8) arc (390:297:4.13);
\draw[blue] (0.9, -1) node {$h$};
\draw[blue] (-0.9, -1) node {$h$};

\draw[semithick,red] (3,0.77) arc (150:215:3.8);
\draw[semithick,red] (-3,0.77) arc (390:325:3.8);
\draw[red] (2.75,-1.1) node {$u'$};

\draw[semithick,teal,dashed] (-3.3,-3.7) arc (90:31:1.9); 
\draw[semithick,teal, rounded corners] (-1.7,-4.65) -- (0,-2) -- (0,2);
\draw[semithick,teal,dashed] (0,2) arc (330:250:3);
\draw[semithick,teal,dashed, rounded corners] (3.5,0.8) -- (1.2,-0.5) -- (1.2, -1) -- (3.3,-3.7);
\draw[teal] (-0.1,-2.7) node {$u''$};

\draw (-1.2,2.5) node {$g$}; 
\draw (1.2,2.5) node {$g$};
\draw  (2.15,0.9) node {$f$};
\draw  (-2.15,0.9) node {$f$};
\draw (3.75,0.85) node {$e$};
\draw (-3.75,0.85) node {$e$};
\draw (3.2,-0.45) node {$d$};
\draw (-3.2,-0.45) node {$d$};
\draw (4.8,-1.5) node {$c$};
\draw (-4.8,-1.5) node {$c$};
\draw (3.2, -2.3) node {$b$};
\draw (-3.15, -2.3) node {$b$};
\draw (3.5,-4.0) node {$a$};
\draw (-3.5,-4.0) node {$a$};

\end{tikzpicture}
\end{center}
\caption{Two possible positions $u'$ and $u''$ for the curve $u$, which meets only $\{a,e\}$\label{Fig:affine_generator}}
\end{figure}
Note that since $u \cap h = \emptyset$, the curve $u$  is confined to live  in one of the two regions into which $h$ divides $\Sigma_1^8$; it might wind non-trivially around the additional handle in the region which contains that handle, but that will have no bearing on the subsequent argument.  The arc $v$ being disjoint from $h$ is constrained similarly, and then $u \cap v = \emptyset$ means that $u$ and $v$ live in different regions of $\Sigma_1^8 \backslash \{h\}$.

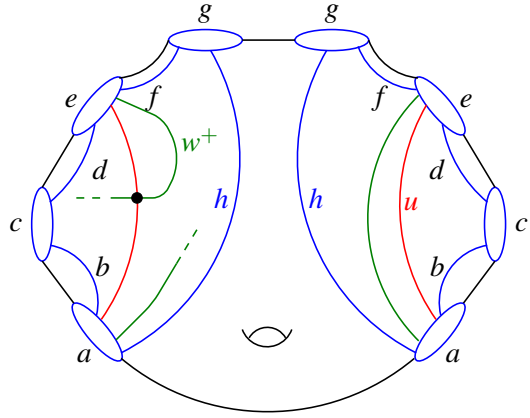
\begin{figure}[ht]
\begin{center} 
\begin{tikzpicture}[scale=0.7]

\draw[semithick,blue] (-1.2,2) ellipse (0.7cm and 0.2cm);
\draw[semithick,blue] (1.2,2) ellipse (.7cm and 0.2cm);
\draw[semithick,blue, rotate around={-50:(2,1)}] (3,1.8) ellipse (.7cm and 0.2cm);
\draw[semithick,blue, rotate around={50:(-2,1)}] (-3,1.8) ellipse (.7cm and 0.2cm);
\draw[semithick,blue, rotate around={90:(4.3, -1.5)}] (4.3,-1.5) ellipse (.7cm and 0.2cm);
\draw[semithick,blue, rotate around={90:(-4.3, -1.5)}] (-4.3,-1.5) ellipse (.7cm and 0.2cm);
\draw[semithick,blue, rotate around={-50:(-2,-3.8)}] (-3,-4.6) ellipse (.7cm and 0.2cm);
\draw[semithick,blue, rotate around={50:(2,-3.8)}] (3,-4.6) ellipse (.7cm and 0.2cm);

\draw[semithick] (-0.5,-3.5) arc (200:340:0.5);
\draw[semithick] (-0.37,-3.66) arc (150:30:0.4);

\draw[semithick] (-0.5,2) -- (0.5,2);
\draw[semithick] (-2.8,-4.05) arc (230:310:4.33);

\draw[semithick] (1.9,2) arc (200:265:1.1);
\draw[semithick] (-1.9,2) arc (340:275:1.1);
\draw[semithick] (3.7,0.2) -- (4.3,-0.8);
\draw[semithick] (-3.7,0.2) -- (-4.3,-0.8);
\draw[semithick] (4.3,-2.2) -- (3.7,-3);
\draw[semithick] (-4.3,-2.2) -- (-3.7,-3);

\draw[semithick, blue] (1.7,1.88) arc (200:267:1.25);
\draw[semithick,blue] (-1.7,1.88) arc (340:272:1.25);
\draw[semithick,blue] (3.3,0.4) arc (190:230:2.5);
\draw[semithick,blue] (-3.3,0.4) arc (350:310:2.5);
\draw[semithick,blue] (4.15,-1.9) arc (100:193:1.1);
\draw[semithick,blue] (-4.15,-1.9) arc (80:-13:1.1);

\draw[semithick,blue] (1.1,1.8) arc (150:243:4.13); 
\draw[semithick,blue] (-1.1,1.8) arc (390:297:4.13);
\draw[blue] (0.9, -1) node {$h$};
\draw[blue] (-0.9, -1) node {$h$};

\draw[semithick,red] (3,0.77) arc (150:215:3.8);
\draw[semithick,red] (-3,0.77) arc (390:325:3.8);
\draw[red] (2.7,-1.1) node {$u$};

\draw[semithick,green!50!black] (2.85,0.95) arc (135:225:3.3);
\draw[semithick, green!50!black, rounded corners] (-3,-1) -- (-2,-1) -- (-1.75,-0.5) -- (-1.75,0) -- (-2,0.5) -- (-2.9,0.9);
\draw[semithick,green!50!black,rounded corners] (-2.9,-3.7) -- (-2.2,-3) -- (-1.75,-2.25);
\draw[semithick,green!50!black,dashed] (-3.2,-1) -- (-3.7,-1);
\draw[semithick,green!50!black,dashed] (-1.75,-2.25) -- (-1.3, -1.5);
\draw[green!50!black] (-1.3,0.1) node {$w^+$};
\draw[fill] (-2.5,-1) circle (0.1);

\draw (-1.2,2.5) node {$g$}; 
\draw (1.2,2.5) node {$g$};
\draw  (2.15,0.9) node {$f$};
\draw  (-2.15,0.9) node {$f$};
\draw (3.75,0.85) node {$e$};
\draw (-3.75,0.85) node {$e$};
\draw (3.2,-0.45) node {$d$};
\draw (-3.2,-0.45) node {$d$};
\draw (4.8,-1.5) node {$c$};
\draw (-4.8,-1.5) node {$c$};
\draw (3.2, -2.3) node {$b$};
\draw (-3.15, -2.3) node {$b$};
\draw (3.5,-4.0) node {$a$};
\draw (-3.5,-4.0) node {$a$};

\end{tikzpicture}
\end{center}
\caption{Impossible curve configurations\label{Fig:first_impossibility}}
\end{figure}

We now try to add the generator $w^+$ corresponding to $(0111)$ in Figure \ref{Fig:first_impossibility}, which hits precisely the curves $\{a,b,c,d,e,u\}$ but not $\{f,g,h,v\}$; we would consider the curve  $w^-$ associated to $(1110)$, which hits $\{c,d,e,f,g,v\}$ but not $\{a,b,h,u\}$, if we had the other choice of $u$ from Figure \ref{Fig:affine_generator} (in which case $v$ lies in the regions occupied by the red copy of $u$ in Figure \ref{Fig:affine_generator}).  Thus, we are now considering a subset of the graph $\Gamma$ with the following intersection pattern, which makes the symmetry between the situations with $\{u,w^+\}$ respectively $\{v,w^-\}$ manifest:
\[
\xymatrix@R=1em{
&&&&v \ar@{-}[d]&&& \\
&&&&w^- &&&\\
a \ar@{-}@/_6.5pc/[rrrrrrr]_{} \ar@{-}[r] \ar@{-}[rrd] \ar@{-}[rrdd] & b \ar@{-}[r] \ar@{-}[rd] & c \ar@{-}[r] \ar@{-}[d] \ar@{-}[rru] \ar@{-}[rruu] & d \ar@{-}[r] \ar@{-}[ru] \ar@{-}[ld] & e \ar@{-}[r] \ar@{-}[u] \ar@{-}[lld] \ar@{-}[lldd] & f \ar@{-}[r] \ar@{-}[lu] & g \ar@{-}[r] \ar@{-}[llu] \ar@{-}[lluu] & h \\
&& w^+ &&&&& \\
&& u \ar@{-}[u] &&&&&
}
\]
The (green) $w^+$-curve meets $u$; the point of intersection is marked by $\bullet$ in Figure \ref{Fig:first_impossibility}. Disjointness from $\{f,g,h\}$ then forces the green curve into either $e$ or $a$ (again, arbitrarily, we have depicted the former case).  It emerges on the other side of $e$, and then it cannot cross $u$ again, since the braid relation means these curves have geometric intersection number 1, nor can it meet $\{f,g,h\}$, so it goes into $a$, but then it enters a region where it can't escape: thus the two dotted ends of $w^+$ can't rejoin.  In the other configuration, where $u$ lies on the `back' of Figure \ref{Fig:A7_chain_and_cut} but $v$ lies on the front, following the green $w^-$-curve  from its unique  intersection with $v$, one runs into the same problem.  In both cases we arrive at a contradiction, i.e. an unrealisable configuration on $\Sigma_5$.  

More formally, label the three regions of Figure \ref{Fig:affine_generator} $A,B,C$ from left to right (note that $A\cup C$ is connected on $\Sigma_1^8$, despite appearances in the figure). Disjointness of $u$ and $v$ ensures that one of these curves lies in $A\cup C$ and the other in $B$.  Suppose $u$ lies in $A \cup C$; it then separates this  into two regions.  The curve $w^+$ must also lie inside $A \cup C$, but cross between its two regions exactly once to have geometric intersection number one with  $u$.  Reversing the roles of $u,v$ respectively $w^+, w^-$ covers the other possible case.

\begin{rmk} There are still four further vertices in $\Gamma$ and we have not appealed to the triangle relation constraints of Remark \ref{rmk:triangle}, so this is far from a `sharp' obstruction.   We have chosen to focus on the commuting / braid relations which define $G(\Gamma)$ since these stay  closer to the usual  realm of Artin and Coxeter groups.
\end{rmk}
 
\begin{cor} \label{cor:disconnected} The monodromy $G(\Gamma) \to Sp(10;\bZ)$ does not factor through the mapping class group of a surface $\Sigma$ of total genus $5$.
\end{cor}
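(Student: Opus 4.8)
The plan is to push everything down to the homological level. The case $r=1$ --- that the monodromy does not factor through $\Gamma_5$ --- is exactly what the preceding sections establish, so what remains is to treat a disconnected target $\Sigma = \sqcup_{i=1}^r \Sigma_{g_i}$ with $r\geq 2$, all $g_i>0$, and $\sum_i g_i = 5$. I would argue that in this case the composite $G(\Gamma)\to\Gamma(\Sigma)\to Sp(10;\bZ)$ already cannot be the homological monodromy, invoking only Lemma \ref{lem:transvection} and Lemma \ref{lem:irred}; no Thurston-theoretic input is needed.

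First I would record the structure of $\Gamma(\Sigma)\to Sp(10;\bZ)$ for disconnected $\Sigma$. Since the components are closed and pairwise disjoint, $H_1(\Sigma;\bQ) = \bigoplus_{i=1}^r V_i$ with $V_i := H_1(\Sigma_{g_i};\bQ)$ a non-zero symplectic subspace of dimension $2g_i\geq 2$, the summands being pairwise orthogonal. An orientation-preserving diffeomorphism of $\Sigma$ permutes the components (necessarily among those of equal genus) and acts symplectically on each, so the image of $\Gamma(\Sigma)\to Sp(10;\bZ)$ lies in the subgroup preserving the multiset $\{V_1,\ldots,V_r\}$. A factorisation would therefore produce in particular a permutation action of $G(\Gamma)$ on this multiset; write $f_v$ for the image of the standard generator $\sigma_v$ and $\pi_v$ for the induced permutation of the blocks.

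Next I would rule out any non-trivial $\pi_v$, exactly as in the proof of Lemma \ref{lem:separating_fixed}. By Lemma \ref{lem:transvection} each $f_v$ is a transvection, so $\mathrm{Fix}(f_v)\subset H_1(\Sigma;\bQ)$ is a hyperplane, and hence meets every positive-dimensional symplectic subspace non-trivially. If $\pi_v$ moved a block $V_i$ to a distinct block $V_j$, then $f_v(V_i)=V_j$ together with $V_i\cap V_j = 0$ would force $\mathrm{Fix}(f_v)\cap V_i = 0$, a contradiction. Hence every $\pi_v$ is trivial, so each $f_v$ --- and therefore all of $G(\Gamma)$, since the $\sigma_v$ generate it --- preserves the decomposition $\bigoplus_i V_i$. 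As $r\geq 2$, the subspace $V_1$ is a non-zero proper $G(\Gamma)$-invariant subspace of $H_1(\Sigma;\bQ)$, contradicting the irreducibility asserted in Lemma \ref{lem:irred}. This excludes $r\geq 2$, and with the case $r=1$ it finishes the proof.

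I do not expect a serious obstacle here: the disconnected case is strictly easier than the connected one, which is why the paper dispatches it as a corollary. The only point that genuinely needs attention is the possibility that $\Gamma(\Sigma)$ interchanges diffeomorphic components of $\Sigma$ --- which is precisely what the transvection property excludes in the third paragraph --- so that one cannot simply quote irreducibility against a block-diagonal image without first disposing of the permutation factor.
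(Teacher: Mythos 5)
Your argument is correct and follows essentially the same route as the paper: kill the permutation of components using the fact that a transvection's fixed hyperplane must meet every positive-dimensional symplectic subspace non-trivially, then invoke the irreducibility of Lemma \ref{lem:irred} against the resulting block-diagonal image. The only (harmless) difference is that you apply the transvection argument directly to each generator, whereas the paper first quotes Artin's result that $Br_n\to\mathrm{Sym}_k$ has cyclic image for $k<n$ before reducing to the same point.
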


\begin{proof} 
The case in which $\Sigma$ is connected follows from the preceding arguments. If $\Sigma = \sqcup_j \Sigma_j$, then $\Gamma(\Sigma)$ is built out of the mapping class groups $\Gamma_{g_j}$ of the components, together with permutations of diffeomorphic components.   Suppose $\Sigma$ has more than one component of genus $g \in \{1,2\}$. When $k<n$, any homomorphism from $Br_n$ to $\mathrm{Sym}_k$ has cyclic image \cite{Artin}, so all the elements $f_{\eta} \in Br_6$ induce the same permutation of these components. Since the generators act by transvections, this must be the trivial permutation; then since all the generators $f_v$ of $G(\Gamma)$ are conjugate, they necessarily all induce the trivial permutation of $\pi_0\Sigma$.  It follows that, when $\Sigma$ is disconnected, a factorization of the monodromy through $\Gamma(\Sigma)$ is in fact through $\prod_i \Gamma_{g_i}$.  If there is more than one factor, this contradicts Lemma \ref{lem:irred}.
\end{proof}

\begin{rmk} Recall from Remark \ref{rmk:fano} that the monodromy homomorphism does factor through a certain finite index subgroup $\mathrm{Prym}_6$ of $\Gamma_6$. Note that this does \emph{not} say that if factors through $\Gamma_6$ itself, since the homomorphism $\mathrm{Prym}_6 \to Sp(10;\bZ)$ does not extend to $\Gamma_6$. 
\end{rmk}

\bibliographystyle{amsalpha}
\bibliography{mybib}

\end{document}